\title{Hyperconvex representations and exponential growth}
\author{A. Sambarino}
\date{}
\newcommand{\R}{\mathbb{R}}
\renewcommand{\P}{\mathbb{P}}
\renewcommand{\/}{\backslash}
\renewcommand{\k}{\kappa}
\newcommand{\vacio}{\emptyset}
\newcommand{\G}{\Gamma}
\renewcommand{\l}{\ell}
\newcommand{\<}{\left<}
\renewcommand{\>}{\right>}
\newcommand{\E}{\Sigma}
\newcommand{\scr}{\mathscr}
\newcommand{\g}{\gamma}
\renewcommand{\a}{\alpha}
\newcommand{\z}{\zeta}
\newcommand{\w}{\widetilde}
\newcommand{\bord}{\partial}
\newcommand{\vo}[1]{\overline{#1}}
\newcommand{\PGL}[1]{\mathrm{PGL}(#1,\mathbb R)}
\newcommand{\mismo}{\circlearrowleft}
\newcommand{\co}{\beta_1}
\newcommand{\vco}{\beta_d}
\newcommand{\ta}{\Theta}
\newcommand{\vect}{\beta}
\newcommand{\cone}{\scr L}
\newcommand{\bus}{\sigma}
\newcommand{\grupo}{\Delta}
\newcommand{\om}{\omega}
\newcommand{\posgen}{\bord^2\scr F}
\renewcommand{\t}{\theta}
\newcommand{\cono}{{\cone_\rho}^*}
\renewcommand{\L}{\Lambda}
\renewcommand{\cal}{\mathcal}
\DeclareMathOperator{\ii}{i}
\DeclareMathOperator{\fix}{fix}
\DeclareMathOperator{\PSL}{PSL}
\DeclareMathOperator{\GL}{GL}
\DeclareMathOperator{\clase}{C}
\DeclareMathOperator{\inter}{int}
\DeclareMathOperator{\pgl}{PGL}
\DeclareMathOperator{\holder}{Holder}
\DeclareMathOperator{\Hitchin}{Hitchin}
\DeclareMathOperator{\tope}{top}
\newtheorem*{teos}{Theorem}
\newtheorem*{teo1}{Theorem A}
\newtheorem*{teoB}{Theorem B}
\newtheorem{teo}{Theorem}[section]
\newtheorem{cor}[teo]{Corollary}
\newtheorem*{cors}{Corollary}
\newtheorem{lema}[teo]{Lemma}
\newtheorem{prop}[teo]{Proposition}
\newtheorem*{prop1}{Proposition}
\theoremstyle{definition}
\newtheorem{defi}[teo]{Definition}
\newtheorem{obs}[teo]{Remark}
\theoremstyle{remark}
\begin{document}
\maketitle

\begin{abstract} Let $G$ be a real algebraic semi-simple Lie group and $\G$ be the fundamental group of a compact negatively curved manifold. In this article we study the limit cone, introduced by Benoist\cite{limite}, and the growth indicator function, introduced by Quint\cite{quint2}, for a class of representations $\rho:\G\to G$ admitting a equivariant map from $\bord\G$ to the Furstenberg boundary of $G$'s symmetric space together with a transversality condition. We then study how these objects vary with the representation.
\end{abstract}

\tableofcontents

\section{Introduction}

Consider a discrete subgroup of isometries $\G$ of a negatively curved space $X.$ The exponential growth rate $$\limsup_{s\to\infty}\frac{\log\#\{\g\in\G:d(o,\g o)\leq s\}}s$$ plays a crucial role in understanding asymptotic properties of the group $\G$: On nice situations this exponential growth rate coincides with the topological entropy of the geodesic flow on $\G\/X$ on its non wandering set and with the Hausdorff dimension of $\G$'s limit set on the visual boundary of $X.$ Let us cite the work of Margulis\cite{margulistesis}, Patterson\cite{patterson}, Sullivan\cite{sullivan}, just to name a few.

An important difference appears when one considers higher rank geometry. Let us briefly recall the work of Benoist\cite{limite} and Quint\cite{quint2}. 

Consider $G$ a connected real semi-simple algebraic group and consider some discrete subgroup $\grupo$ of $G.$ Let $K$ be a maximal compact subgroup of $G$ $\tau$ the Cartan involution on $\frak g$ for which the set $\fix \tau$ is $K$'s Lie algebra, consider $\frak p=\{v\in\frak g: \tau v=-v\}$ and $\frak a$ a maximal abelian subspace contained in $\frak p.$ 

Let $\E$ be the roots of $\frak a$ on $\frak g,$ $\E^+$ a system of positive roots on $\E$ and $\Pi$ the set of simple roots associated to the choice $\E^+.$ Let $\frak a^+$ be a Weyl chamber and $a:G \to \frak a^+$ the Cartan projection. Fix some norm $\|\cdot\|$ on $\frak a$ invariant under the Weyl group. If $\|\ \|$ is euclidean then $\|a(g)\|$ is the Riemannian distance on $X$ ($=G$'s symmetric space) between $g\cdot o=g[K]$ and $o=[K],$ if $\|\ \|$ is not euclidean then $\|a(g)\|$ can be interpreted as $d(o,g\cdot o)$ for some $G$-invariant Finsler metric on $X.$ The exponential growth rate one is interested in is $$h_\grupo^{\|\ \|}:=\limsup_{s\to\infty}\frac{\log \#\{g\in\grupo:\|a(g)\|\leq s\}}s.$$

Nevertheless, $\frak a$ being higher dimensional, one can consider the directions where the points $\{a(g):g\in\grupo\}$ are. Benoist\cite{limite} has shown that the asymptotic cone of $\{a(g):g\in\grupo\},$ i.e. the limit points of sequences $t_na(g_n)$ where $t_n\in\R$ goes to zero and $g_n$ belongs to $\grupo,$ coincides with the closed cone generated by the spectrum $\{\lambda(g) :g\in \grupo\},$ $\lambda:G\to\frak a^+$ being the Jordan projection. One inclusion is trivial since $$\frac {a(g^n)}n\to\lambda(g)$$ when $n\to\infty$ (c.f. Benoist\cite{limite}).

\begin{teo}[Benoist\cite{limite}]\label{teo:benoistlimite} Assume $\grupo$ is Zariski dense in $G,$ then the asymptotic cone generated by $\{a(g):g\in\grupo\}$ coincides with the closed cone generated by $\{\lambda(g):g\in\grupo\}.$ This cone is convex and has non empty interior.
\end{teo}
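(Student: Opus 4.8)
The plan is to prove the two inclusions between $\scr A$, the asymptotic cone of $\{a(g):g\in\grupo\}$, and $\cone$, the closed cone generated by $\{\lambda(g):g\in\grupo\}$, and then deduce convexity and full-dimensionality. The inclusion $\cone\subseteq\scr A$ is the easy one and is essentially recorded in the excerpt: from $a(g^n)/n\to\lambda(g)$ one gets $\lambda(g)\in\scr A$, and $\scr A$ is a closed cone, so $\cone\subseteq\scr A$. The technical heart that everything else rests on is a general-position product estimate, which I would isolate first: \emph{if $g,h\in\grupo$ are loxodromic (i.e.\ $\lambda(g),\lambda(h)\in\inte\frak a^+$) and the attracting and repelling flags of $g$ and of $h$ in the Furstenberg boundary $G/P$ are pairwise in general position, then $g^mh^n$ is loxodromic for all large $m,n$ and $\|\lambda(g^mh^n)-m\lambda(g)-n\lambda(h)\|\leq C$ with $C$ independent of $m,n$.} I would prove this by fixing a faithful representation $G\hookrightarrow\GL(V)$ assembled from fundamental representations, under which loxodromic elements become proximal and $\lambda$ is read off from the spectral radii on the exterior powers $\Lambda^kV$; the estimate then becomes the classical statement that a product of proximal matrices whose attracting lines and repelling hyperplanes are in general position is again proximal, with logarithm of the spectral radius equal to the sum of the individual ones up to a uniformly bounded error, proved by the usual contraction (ping-pong) estimates on projective space.

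Granting the product estimate, I would prove $\scr A\subseteq\cone$ as follows. Let $v=\lim t_na(g_n)$ with $t_n\to0$ and $\|a(g_n)\|\to\infty$. By an Abels--Margulis--Soifer type argument there is a finite set $F\subseteq\grupo$ such that for every $g\in\grupo$ some $f\in F$ makes $gf$ loxodromic with its attracting/repelling flags in a fixed general position relative to a Cartan frame realizing $a(gf)$; for such well-positioned loxodromic elements $\|\lambda(gf)-a(gf)\|$ is bounded by a constant depending only on $F$, while $\|a(gf)-a(g)\|\leq\max_{f\in F}\|a(f)\|$. Applying this to each $g_n$ and using $t_n\to0$ to absorb the two bounded errors, $t_n\lambda(g_nf_n)\to v$, whence $v\in\cone$.

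Convexity of $\cone$ follows from the product estimate together with Zariski density. By the previous step every point of $\cone$ is a limit of positive multiples of Jordan projections of loxodromic elements, so it suffices to show that for loxodromic $g,h\in\grupo$ and rationals $p,q>0$ with $p+q=1$ the ray $\R_{>0}\big(p\lambda(g)+q\lambda(h)\big)$ lies in $\cone$. Since $\grupo$ is Zariski dense, the set of $\g\in\grupo$ for which $g$ and $\g h\g^{-1}$ have pairwise general-position attracting/repelling flags is Zariski open and non-empty, so such a $\g$ exists; note $\lambda(\g h\g^{-1})=\lambda(h)$. Applying the product estimate to the words $g^{\,a}(\g h\g^{-1})^{\,b}$ and letting $a,b\to\infty$ with $a/(a+b)\to p$ gives $\lambda\big(g^{a}(\g h\g^{-1})^{b}\big)/(a+b)\to p\lambda(g)+q\lambda(h)$, so the limit lies in $\cone$. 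Hence $\cone=\scr A$ is a convex cone.

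Finally, $\cone$ has non-empty interior, equivalently it spans $\frak a$; this is where Zariski density is used most essentially. One first observes that $\grupo$ contains loxodromic elements with $\lambda(g)\in\inte\frak a^+$, because such elements form a Zariski open non-empty subset of $G$ and hence meet $\grupo$. One then shows that the Jordan projections of the loxodromic elements of $\grupo$ cannot lie in a proper linear subspace $V\subsetneq\frak a$: if they did, then by the reverse inclusion already proved $\scr A\subseteq V$, so the Cartan projections $a(\grupo)$ would be confined up to sublinear error to $V$, and this rigidity forces $\grupo$ into a proper algebraic subgroup of $G$, contradicting Zariski density. Thus $\cone$ is a convex cone meeting $\inte\frak a^+$ and spanning $\frak a$, so it has non-empty interior. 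The main obstacle is the general-position product estimate in its quantitative, uniform $O(1)$ form --- from it the reverse inclusion, convexity, and full-dimensionality all flow --- while its two supporting inputs, the Abels--Margulis--Soifer finite set and the Zariski density of loxodromic elements in $G$, are precisely where the hypothesis that $\grupo$ is Zariski dense enters.
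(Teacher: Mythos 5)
This statement is quoted by the paper from Benoist \cite{limite}; the paper itself proves nothing beyond observing the trivial inclusion $a(g^n)/n\to\lambda(g)$, so your sketch can only be measured against Benoist's original argument. Up to the point of convexity you follow that argument faithfully and correctly: the easy inclusion, the quantitative product estimate for proximal elements in general position (read through the fundamental representations), the Abels--Margulis--Soifer finite set $F$ giving $\|\lambda(gf)-a(gf)\|\leq C$ and $\|a(gf)-a(g)\|\leq\max_{f\in F}\|a(f)\|$ for the reverse inclusion, and the conjugation trick $\g h\g^{-1}$ (legitimate by Zariski density, since the transversality conditions are Zariski open and non-empty) to get $\lambda\bigl(g^{a}(\g h\g^{-1})^{b}\bigr)/(a+b)\to p\lambda(g)+q\lambda(h)$ for convexity. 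These parts are sound modulo the two acknowledged inputs.

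The genuine gap is the non-empty interior. Your argument there is: if all $\lambda(g)$, $g\in\grupo$, lay in a proper subspace $V\subsetneq\frak a$, then $a(\grupo)$ stays within sublinear error of $V$, and ``this rigidity forces $\grupo$ into a proper algebraic subgroup of $G$.'' That last implication is asserted, not proved, and it is not a quotable elementary fact: it is exactly the contrapositive of the statement you are trying to establish (Zariski dense $\Rightarrow$ the convex cone is not contained in a hyperplane), so as written the step is circular. Nothing in your earlier machinery (proximality estimates, AMS) yields it; a group could a priori have all its Cartan projections drifting sublinearly from a wall-transverse hyperplane without any visible algebraic obstruction, and ruling this out is precisely the hard content of Benoist's theorem. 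Benoist's own proof of the non-empty interior requires a separate argument; alternatively, one could legitimately deduce it from his later theorem that the group generated by $\{\lambda(g):g\in\grupo\}$ is dense in $\frak a$ (quoted in this paper as Theorem \ref{teo:densos}), since a convex cone with empty interior lies in a hyperplane, which would contain that group --- but that theorem is itself at least as deep, and you neither invoke it nor supply a substitute. As it stands, the full-dimensionality claim is unproved in your proposal.
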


This cone is the called the \emph{limit cone} of $\grupo$ and denoted $\cone_\grupo.$ Quint\cite{quint2} is then interested in \emph{how many} elements of $\{a(g):g\in \grupo\}$ are in each direction of $\cone_\grupo:$ Given an open cone $\scr C\subset \frak a^+$ consider the exponential growth rate $$h_{\scr C}^{\|\ \|}:= \limsup_{s\to \infty} \frac{\log \#\{g\in \grupo: a(g)\in \scr C\textrm{ with }\|a(g)\|\leq s\}}{s},$$ the \emph{growth indicator function}, introduced by Quint\cite{quint2}, is then the function $\psi_\grupo:\frak a\to\R\cup\{-\infty\}$ defined as $$\psi_\grupo(v):=\|v\|\inf\{h_{\scr C}^{\|\ \|}:\textrm{ $\scr C$ open cone with $v\in\scr C$}\}.$$ One remarks that $\psi_\grupo$ is homogeneous and independent of the norm $\|\ \|$ chosen.

He shows the following theorem:

\begin{teo}[Quint\cite{quint2}] Let $\grupo$ be a Zariski dense discrete subgroup of $G.$ Then $\psi_\grupo$ is concave and upper semi-continuous, the set $$\{v\in\frak a:\psi_\grupo(v)>-\infty\}$$ is the limit cone $\cone_\grupo$ of $\grupo.$ $\psi_\grupo$ is non negative on $\cone_\grupo$ and positive on its interior.
\end{teo}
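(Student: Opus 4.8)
The plan is to prove the four assertions in increasing order of difficulty: upper semi-continuity together with the inclusion $\{v:\psi_\grupo(v)>-\infty\}\subseteq\cone_\grupo$ are soft, positivity on $\inter\cone_\grupo$ is the main point, non-negativity on all of $\cone_\grupo$ (and with it the reverse inclusion) is then formal, and concavity is a separate, also delicate, argument. For the soft facts: homogeneity of $\psi_\grupo$ and independence of the norm are immediate. Writing $\psi_\grupo(v)=\|v\|\,\va(v)$ with $\va(v):=\inf\{h^{\|\cdot\|}_{\scr C}:\scr C\textrm{ open cone},\ v\in\scr C\}$ (well defined, as $h^{\|\cdot\|}_{\scr C}$ depends only on $\scr C$), any open cone containing $v$ contains all nearby $w$, so $\va(w)\le h^{\|\cdot\|}_{\scr C}$ near $v$; taking $\limsup_{w\to v}$ and then $\inf$ over $\scr C\ni v$ gives $\limsup_{w\to v}\va(w)\le\va(v)$, and since $\|\cdot\|$ is continuous and positive off the origin (and $\psi_\grupo(w)\le\|w\|\,h^{\|\cdot\|}_\grupo\to0$ at the origin, $h^{\|\cdot\|}_\grupo<\infty$ by the standard packing bound), $\psi_\grupo$ is upper semi-continuous. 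If $v\notin\cone_\grupo$, that cone being closed there is an open cone $\scr C\ni v$ whose closure misses $\cone_\grupo$; by Theorem~\ref{teo:benoistlimite} the set $\{a(g):g\in\grupo\}\cap\scr C$ is then bounded, hence $\{g\in\grupo:a(g)\in\scr C\}$ is finite ($\grupo$ discrete, $a$ proper) and $h^{\|\cdot\|}_{\scr C}=-\infty$, so $\psi_\grupo(v)=-\infty$.

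The engine is a Schottky construction, and this is where Zariski density is used. By Benoist's work \cite{limite} one produces, for any finite set of target directions in $\inter\cone_\grupo$ and any $\delta>0$, loxodromic $g_1,\dots,g_k\in\grupo$ whose attracting and repelling flags (in the relevant minimal parabolics) are pairwise transverse and with $\lambda(g_i)/\|\lambda(g_i)\|$ within $\delta$ of the targets; after replacing each $g_i$ by a high power, $\grupo':=\langle g_1,\dots,g_k\rangle$ is a Schottky, hence free, subgroup of $\grupo$. Ping-pong then gives $C=C(\grupo')$ with
\[
\Bigl\|\,a(g_{i_1}\cdots g_{i_m})-\textstyle\sum_{j=1}^m\lambda(g_{i_j})\,\Bigr\|\le C
\]
for every positive word in the $g_i$: the Cartan projection is additive up to bounded error along $\grupo'$. (This rests on the general fact, also in \cite{limite}, that $\|a(g)-\lambda(g)\|$ is controlled by how far the attracting/repelling data of $g$ sits from the diagonal, which transversality keeps bounded along $\grupo'$.) Now fix $v\in\inter\cone_\grupo$. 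By Theorem~\ref{teo:benoistlimite} the directions of $\{\lambda(g):g\in\grupo\}$ are dense in the directions of the full-dimensional cone $\cone_\grupo$, so the $g_i$ above can be chosen with $u_i:=\lambda(g_i)$ forming an arbitrarily fine net of directions around $v$; in particular $v=\sum_it_iu_i$ with $t_i\ge0$, $\sum_it_i=1$ and at least two $t_i>0$. For large $N$ put $n_i:=\lfloor Nt_i\rfloor$ (remainder absorbed in one index) and count the positive words of length $N$ in which index $i$ occurs exactly $n_i$ times: there are $\binom{N}{n_1,\dots,n_k}=\exp\bigl(NH(t)(1+o(1))\bigr)$, with $H(t)=-\sum_it_i\log t_i>0$; they are pairwise distinct in $\grupo$ ($\grupo'$ free); and each such $w$ has $a(w)=Nv+O(1)$, so $a(w)$ lies in any prescribed open cone $\scr C\ni v$ for $N$ large, with $\|a(w)\|\le N\|v\|+O(1)=:s$. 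Hence $h^{\|\cdot\|}_{\scr C}\ge H(t)/\|v\|$ for all such $\scr C$, so $\psi_\grupo(v)\ge H(t)>0$. As $\cone_\grupo$ is a closed convex cone with non-empty interior it equals the closure of its interior, so upper semi-continuity gives $\psi_\grupo\ge0$ on $\cone_\grupo$ and the missing inclusion $\cone_\grupo\subseteq\{\psi_\grupo>-\infty\}$.

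There remains concavity. Given $v_1,v_2\in\cone_\grupo$, $\t\in(0,1)$, set $v:=\t v_1+(1-\t)v_2$; only the case $\psi_\grupo(v_1),\psi_\grupo(v_2)>-\infty$ matters, and by upper semi-continuity it suffices to treat $v_1,v_2\in\inter\cone_\grupo$ and reach the boundary in the limit. The approach is concatenation: fix thin open cones $\scr C_j\ni v_j$ and $\eta>0$; there are arbitrarily large scales $s_j$ along which at least $\exp\bigl(s_j(\psi_\grupo(v_j)/\|v_j\|-\eta)\bigr)$ elements $g\in\grupo$ satisfy $a(g)\in\scr C_j$, $\|a(g)\|\le s_j$. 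Allotting the scales as $s_1:s_2=\t\|v_1\|:(1-\t)\|v_2\|$, for $g^{(j)}$ in these families $a(g^{(1)})+a(g^{(2)})$ points --- for thin $\scr C_j$ --- inside a prescribed open cone of $v$, of norm $\le s_1+s_2+o(s_1{+}s_2)$; if $(g^{(1)},g^{(2)})\mapsto g^{(1)}g^{(2)}$ were essentially injective here and $a(g^{(1)}g^{(2)})=a(g^{(1)})+a(g^{(2)})+O(1)$, this would produce $\ge\exp\bigl(s_1(\psi_\grupo(v_1)/\|v_1\|-\eta)+s_2(\psi_\grupo(v_2)/\|v_2\|-\eta)\bigr)$ elements with Cartan projection in the cone and controlled norm, and letting $\eta\to0$ then taking $\inf$ over open cones around $v$ yields $\psi_\grupo(v)\ge\t\psi_\grupo(v_1)+(1-\t)\psi_\grupo(v_2)$.

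The main obstacle is exactly this additivity under multiplication, which \emph{fails} for arbitrary pairs --- $a(gh)$ can be far smaller than $a(g)+a(h)$ when the repelling flag of $h$ nearly meets the attracting flag of $g^{-1}$ --- together with the secondary bookkeeping of aligning the two $\limsup$ scales. Both are resolved through Zariski density: one inserts a fixed finite set of loxodromic ``linking'' elements and counts products $g^{(1)}\delta g^{(2)}$, the $\delta$'s chosen by Benoist's general-position arguments (in the spirit of Abels--Margulis--Soifer) so that for every pair some $\delta$ makes the attracting flag of $g^{(1)}\delta$ transverse to the repelling flag of $g^{(2)}$, restoring additivity up to $O(1)$ while perturbing norms only by $O(1)$; equivalently, one intersects each family with a fixed transversality neighbourhood of the attracting/repelling flags and checks by a positive-proportion argument that this costs nothing exponentially. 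Controlling this transversality --- the gap between $a$ and $\lambda$ and its stability under the group law --- is the only step beyond bookkeeping, and it is precisely here that Theorem~\ref{teo:benoistlimite}, and more to the point the production of loxodromics in general position underlying it, is used rather than just its statement.
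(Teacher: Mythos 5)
This statement is quoted from Quint and used as a black box: the paper contains no proof of it, so your proposal can only be measured against Quint's original argument, and its architecture does match that argument. The soft facts (upper semi-continuity, the inclusion $\{\psi_\grupo>-\infty\}\subseteq\cone_\grupo$ via properness of $a$ and discreteness) are handled correctly; positivity on $\inter\cone_\grupo$ via Benoist's production of Schottky semigroups with prescribed Jordan directions plus the multinomial entropy count is exactly the right mechanism (one caveat: the additive error in $a(g_{i_1}\cdots g_{i_m})-\sum_j\lambda(g_{i_j})$ for an $(r,\eps)$-Schottky family is of order $m\eps$ rather than $O(1)$, with $\eps$ shrinkable by passing to high powers --- harmless for landing in a prescribed open cone, but worth stating correctly); and deducing non-negativity on all of $\cone_\grupo$, hence the reverse inclusion, from upper semi-continuity and the fact that this convex cone with non-empty interior is the closure of its interior, is fine.

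The genuine gap is in the concavity argument. You correctly identify the failure of additivity of $a$ under products and correctly invoke an Abels--Margulis--Soifer finite set of linking elements $\delta$ to restore $\|a(g\delta h)-a(g)-a(h)\|\le C$; but you then list the obstacles as ``additivity'' and ``aligning the limsup scales'' and declare both resolved, while the hypothesis you flagged yourself --- that $(g^{(1)},g^{(2)})\mapsto g^{(1)}\delta g^{(2)}$ be essentially injective --- is left untouched, and the linking elements do nothing for it. A priori one $x\in\grupo$ could admit exponentially many factorizations $x=g\delta h$ with $a(g),a(h)$ in the prescribed regions, which would destroy the lower bound on the counting function. Bounding this multiplicity is a real step: from $\|a(x)-a(g)-a(\delta h)\|\le C$ one is in the near-equality case of the triangle inequality for $\frak a^+$-valued distances in the symmetric space, and it is the rigidity of that equality case (the points $o$, $g\cdot o$, $x\cdot o$ must lie within bounded distance of a common flat, with $g\cdot o$ pinned near a specific point of it) together with discreteness of $\grupo$ that bounds the number of factorizations. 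Note that the rank-one reflex --- thin triangles, or $d(o,g o)+d(g o,x o)\le d(o,x o)+C$ forcing $g o$ onto the geodesic --- is unavailable precisely when $v_1$ and $v_2$ are not parallel, which is the only case where concavity says anything. The scale-alignment issue is also not free: $\psi_\grupo(v_j)$ is only a $\limsup$, so the good scales $s_1,s_2$ need not occur in the ratio $\t\|v_1\|:(1-\t)\|v_2\|$; the standard repair is to first iterate the product construction within each family (which requires the same multiplicity control) so as to upgrade one good scale to all large multiples of it.
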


Quint\cite{quint2} shows that the exponential growth rate for a given norm $\|\ \|$ is then retrieved as $$\sup_{v\in\frak a-\{0\}}\frac{\psi_\grupo(v)}{\|v\|}=h_\grupo^{\|\ \|}.$$

This work consists in deeper study of these objects for hyperconvex representations. This notion has its origin at the work of Labourie\cite{labourie}.

Consider $\G$ a discrete co-compact torsion free isometry group of a negatively curved Hadamard manifold $\w M$ and denote $\scr F=G/P$ where $P$ is a minimal parabolic subgroup of $G.$ The space $\scr F\times\scr F$ has a unique open $G$-orbit denoted $\posgen$

\begin{defi} A representation $\rho:\G\to G$ is \emph{hyperconvex} if it admits a H\"older continuous $\rho$-equivariant map $\z:\bord\G\to\scr F$ such that whenever $x,y\in\bord\G$ are distinct the pair $(\z(x),\z(y))$ belongs to $\posgen.$
\end{defi}

The main example of hyperconvex representation is the following: Consider $\E$ a closed orientable surface of genus $g\geq2$ and say that a representation $\pi_1(\E)\to\PSL(d,\R)$ is \emph{Fuchsian} if it factors as $$\pi_1(\E)\to\PSL(2,\R)\to\PSL(d,\R)$$ where $\PSL(2,\R)\to\PSL(d,\R)$ is the unique  irreducible linear action $\PSL(2,\R)\curvearrowright\R^d$ (modulo conjugation by $\PSL(d,\R)$) and $\pi_1(\E)\to\PSL(2,\R)$ is co-compact. A \emph{Hitchin component} of $\PSL(d,\R)$ is a connected component of $$\hom(\pi_1(\E),\PSL(d,\R))=\{\textrm{morphisms }\rho:\pi_1(\E)\to\PSL(d,\R)\}$$ containing a Fuchsian representation.

\begin{teos}[Labourie\cite{labourie}]A representation in a Hitchin component of $\PSL(d,\R)$ is hyperconvex.
\end{teos}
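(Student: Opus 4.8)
The plan is to deduce this from the fact that Hitchin representations are \emph{Anosov} relative to the minimal parabolic $P$, and to read off $\z$ from the contracting line bundles of a flat bundle over a geodesic flow. Fix a hyperbolic metric on $\E$, so $\G=\pi_1(\E)$ acts on $\H^2$ with boundary $\bord\G$, let $\phi_t$ be the geodesic flow on $U=T^1(\G\backslash\H^2)$, and let $E_\rho\to U$, with induced flow $\Phi_t$, be the flat $\R^d$-bundle attached to $\rho$; its pullback to $\w U\cong\bord^2\H^2\times\R$ is canonically trivial. Recall that $\rho$ is $P$-Anosov if $E_\rho$ carries a $\phi_t$-invariant continuous splitting $E_\rho=\ell_1\oplus\cdots\oplus\ell_d$ into line bundles that is \emph{dominated} in this order, i.e. $\|\Phi_t u\|/\|\Phi_t v\|\geq C^{-1}e^{ct}$ for $t\geq0$ and unit $u\in\ell_i$, $v\in\ell_{i+1}$ over a common point, for suitable $C,c>0$. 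Granting that Hitchin representations are $P$-Anosov, $\z$ is produced as follows. By domination the $i$-plane $\ell_1\oplus\cdots\oplus\ell_i$ over a point of $\bord^2\H^2\times\R$ depends only on the attracting endpoint, giving a continuous $\rho$-equivariant $\xi\colon\bord\G\to G/P=\scr F$, which is H\"older because a dominated splitting over the geodesic flow is automatically H\"older (H\"older section theorem); symmetrically $\ell_d\oplus\cdots\oplus\ell_{d-j+1}$ depends only on the repelling endpoint and yields $\xi^-\colon\bord\G\to G/P^-$. For $\PSL(d,\R)$ the parabolic $P^-$ is conjugate to $P$ and, under the resulting identification $G/P^-\cong\scr F$, the open orbit of $G/P\times G/P^-$ is exactly $\posgen$; moreover the orientation-reversing involution $v\mapsto-v$ of $U$ conjugates $\phi_t$ to $\phi_{-t}$ and hence carries $\ell_i$ to $\ell_{d+1-i}$ (domination reverses order), which forces $\xi^-=\xi=:\z$. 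Finally $(\z(x),\z(y))\in\posgen$ for $x\neq y$ is just the statement that at each point the flags $\ell_1\oplus\cdots\oplus\ell_i$ and $\ell_d\oplus\cdots\oplus\ell_{d-i+1}$ are in general position --- true because $E_\rho$ is their direct sum. Thus everything reduces to: every Hitchin representation is $P$-Anosov.

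For this I would use that the Hitchin component is connected (Hitchin's parametrization) and contains Fuchsian representations, and show that its $P$-Anosov locus is non-empty, open and closed. Non-emptiness: for a Fuchsian $\rho_0=\iota\circ j$ one has $E_{\rho_0}\cong\mathrm{Sym}^{d-1}E$ for the flat $\R^2$-bundle $E\to U$ of an $\SL(2,\R)$-lift of $j$; $E$ carries the classical dominated splitting $E=\ell^+\oplus\ell^-$ into the strong unstable and stable line bundles of the geodesic flow, on which $\Phi_t$ acts (after a harmless normalization) by $e^t$ and $e^{-t}$, so $E_{\rho_0}\cong\bigoplus_{k=0}^{d-1}(\ell^+)^{\otimes(d-1-k)}\otimes(\ell^-)^{\otimes k}$ is a $\phi_t$-invariant flag of line bundles with strictly decreasing dilations $e^{(d-1-2k)t}$ --- dominated, so $\rho_0$ is $P$-Anosov. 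Openness: a dominated splitting is detected by an invariant cone field, hence is robust under $C^0$-small perturbations of $(E_\rho,\Phi_t)$, i.e. of $\rho$; so the $P$-Anosov locus is open in $\hom(\G,\PSL(d,\R))$, a fortiori in the Hitchin component.

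The remaining and hard point is closedness of the $P$-Anosov locus within the Hitchin component: if $\rho_n\to\rho$ with each $\rho_n$ $P$-Anosov, the crude limit of the splittings need not stay dominated, since the constants may degenerate. Here I would follow Labourie: along the Hitchin component the flag maps $\xi_n$ are not merely transverse on pairs but \emph{Frenet}, i.e. for distinct $x_1,\dots,x_k$ the subspaces $\xi_n(x_1)_{m_1},\dots,\xi_n(x_k)_{m_k}$ sum directly whenever $m_1+\cdots+m_k\leq d$; one shows the family $\{\xi_n\}$ is equicontinuous, extracts a $\rho$-equivariant continuous limit curve $\xi$ that is still Frenet, and then proves the converse implication --- that a $\rho$-equivariant Frenet curve over a negatively curved base forces $\rho$ to be $P$-Anosov --- by reconstructing the dominated splitting of $E_\rho$ out of $\xi$ and the contraction of the geodesic flow.

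I expect the main obstacle to be precisely this closedness step, and inside it the equicontinuity/compactness of the family of flag curves together with the ``Frenet curve $\Rightarrow$ Anosov'' implication; this is the technical core of Labourie's argument and the place where one-dimensionality of $\bord\G$ is essential. By contrast the reduction in the first paragraph is formal modulo the structure theory of Anosov representations, the Fuchsian computation is $\mathfrak{sl}_2$-representation theory plus hyperbolicity of the geodesic flow, and the H\"older regularity of $\z$, though it invokes the H\"older section theorem, is routine. Assembling non-emptiness, openness and closedness, the $P$-Anosov locus is all of the connected Hitchin component, and by the first paragraph every Hitchin representation is hyperconvex.
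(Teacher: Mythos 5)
First, a framing remark: the paper does not prove this statement at all --- it is imported verbatim from Labourie --- so your proposal has to be measured against Labourie's actual argument, and on that score it is an outline of that argument rather than a proof. The strategy you describe is indeed the right one (and essentially the known one): reduce the paper's notion of hyperconvexity to the $P$-Anosov property via the dominated splitting of the flat $\R^d$-bundle into line bundles over the geodesic flow, verify the Fuchsian case by taking $\mathrm{Sym}^{d-1}$ of the stable/unstable splitting, get openness from robustness of dominated splittings, and conclude by connectedness of the Hitchin component. Your first-paragraph reduction is sound (the flip-involution argument forcing $\xi^-=\xi$ works, granted uniqueness of dominated splittings; alternatively one just takes the osculating flag of Labourie's Frenet curve, whose pairwise transversality is part of the Frenet property), and the Fuchsian computation and the cone-field openness are fine. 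But the crux --- closedness --- is exactly where you write ``here I would follow Labourie'': equicontinuity/compactness of the family of Frenet curves (which rests on convexity properties of such curves in $\P(\R^d)$), continuity and the Frenet property of the limit curve, and the implication ``equivariant Frenet curve $\Rightarrow$ $P$-Anosov'' are asserted, not proved. Since these constitute the theorem itself, this is a genuine gap, not a deferrable technicality.

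There is also a bookkeeping mismatch in your open-and-closed argument. Openness is argued for the $P$-Anosov locus, but your closedness sketch requires the approximating representations $\rho_n$ to carry \emph{Frenet} curves, which does not follow from $P$-Anosov alone: the Anosov property gives equivariant flag curves with pairwise transversality, not the higher-order direct-sum (osculation) conditions. To invoke connectedness you need a single subset of the Hitchin component that is nonempty, open and closed; Labourie works throughout with the set of representations admitting a hyperconvex Frenet curve, and the openness of \emph{that} set is itself nontrivial (multi-point transversality is not an open condition in general; it is extracted from Anosov structural stability together with an extra analysis near the diagonal of $\bord\G$). As written, your argument establishes ``the Anosov locus is nonempty and open'' and sketches ``the Frenet locus is closed,'' and these do not combine without the missing bridge between the two loci. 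So: correct strategy and a correct formal reduction of the paper's hyperconvexity to the Anosov property, but the technical core is missing --- unavoidably so, since that core is Labourie's theorem.
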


In this work we begin by showing the following property of the limit cone of a hyperconvex representation:

\begin{prop1}[Corollary \ref{cor:interior}]Consider $\rho:\G\to G$ a Zariski dense hyperconvex representation, then the limit cone $\cone_{\rho(\G)}$ is contained in the interior of the Weyl chamber $\frak a^+.$
\end{prop1}

Recall that $\cone_\grupo$ is by definition closed, so the statement of the last proposition is  stronger than ``$\lambda(\rho\g)$ belongs to the interior of the Weyl chamber for every $\g\in\G$''.

The last proposition together with theorem C in A.S.\cite{quantitative} imply directly the following precise counting result. For $g$ in $\PGL d$ denote $\lambda_1(g)\geq\lambda_2 (g) \cdots\geq \lambda_d(g)$ the logarithm of the modulus of the eigenvalues (counted with multiplicity) of a lift $\w g\in\GL(d,\R)$ of $g,$ with determinant in $\{-1,1\}.$

\begin{cors} Let $\rho:\G\to\PGL d$ be a Zariski dense hyperconvex representation, and fix some $i\in\{1,\ldots,d-1\},$ then there exists some positive $h=h_i$ such that $$hte^{ht}\#\{[\g]\in[\G]:\lambda_i(\rho\g)-\lambda_{i+1}(\rho\g)\leq t\}\to 1$$ when $t\to\infty,$ where $[\g]$ is the conjugacy class of $\g.$
\end{cors}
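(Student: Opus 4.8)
The plan is to read off the quantity $\lambda_i(\rho\g)-\lambda_{i+1}(\rho\g)$ as the value of the $i$-th simple root of $\PGL d$ on the Jordan projection of $\rho\g$, and then to feed the counting theorem for linear functionals on the Cartan subalgebra (Theorem C of \cite{quantitative}) with this functional; Proposition C1 above serves only to check the hypothesis of that theorem. Since the corollary is asserted to follow \emph{directly}, there is essentially no new argument involved, and the whole task is to match the present setting with the hypotheses of \cite{quantitative}.

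First, recall that the restricted root system of $\PGL d$ is of type $A_{d-1}$: normalising $\frak a^+$ so that $\lambda(g)=\diag(\lambda_1(g),\ldots,\lambda_d(g))$ with $\lambda_1(g)\geq\cdots\geq\lambda_d(g)$, the simple root $\alpha_i\in\Pi$ is $\alpha_i(\diag(v_1,\ldots,v_d))=v_i-v_{i+1}$, so that $\lambda_i(\rho\g)-\lambda_{i+1}(\rho\g)=\alpha_i(\lambda(\rho\g))$ for every $\g\in\G$ and the set to be counted is $\{[\g]\in[\G]:\alpha_i(\lambda(\rho\g))\leq t\}$. Second, I would use Proposition C1 (Corollary~\ref{cor:interior}): since the interior of the Weyl chamber is exactly $\{v\in\frak a:\alpha(v)>0\textrm{ for all }\alpha\in\Pi\}$, the inclusion $\cone_{\rho(\G)}\subset\inte\frak a^+$ forces $\alpha_i$ to be strictly positive on $\cone_{\rho(\G)}\setminus\{0\}$; because $\cone_{\rho(\G)}$ is a \emph{closed} cone, intersecting with the unit sphere yields a constant $c>0$ with $\alpha_i\geq c\,\|\cdot\|$ on $\cone_{\rho(\G)}$. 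In particular $\{[\g]\in[\G]:\alpha_i(\lambda(\rho\g))\leq t\}$ is finite for each $t$ (it is contained in $\{[\g]:\|\lambda(\rho\g)\|\leq t/c\}$) and the associated exponential rate is finite; by Benoist's Theorem~\ref{teo:benoistlimite} the limit cone has non empty interior, on which $\psi_{\rho(\G)}>0$ by Quint's theorem, so this rate is also positive. Third, I would apply Theorem C of \cite{quantitative} to the Zariski dense hyperconvex representation $\rho$ and to the linear functional $\varphi=\alpha_i$ — which the previous step shows is positive on $\cone_{\rho(\G)}\setminus\{0\}$, the hypothesis of that theorem — obtaining a positive constant $h=h_i$, the exponential growth rate of $\rho$ attached to $\alpha_i$, for which the counting asymptotic asserted in the corollary holds.

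The only subtle point is the one emphasised in the remark following Proposition C1: what is required is not the pointwise statement ``$\alpha_i(\lambda(\rho\g))>0$ for every $\g$'' but the uniform one, that $\alpha_i$ stays positive on the whole closed limit cone, since both the leading term of the count and the finiteness and positivity of $h_i$ rest on the comparability $\alpha_i\asymp\|\cdot\|$ on $\cone_{\rho(\G)}$ — which is precisely what Proposition C1 delivers. Everything else is bookkeeping against \cite{quantitative}, where the genuine analytic content (thermodynamic formalism / transfer operators producing the prime-geodesic-type asymptotic for a functional positive on the limit cone) is carried out and is here taken as given.
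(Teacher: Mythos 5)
Your proposal is correct and is exactly the paper's argument: the corollary is deduced by writing $\lambda_i(\rho\g)-\lambda_{i+1}(\rho\g)=\alpha_i(\lambda(\rho\g))$, using Corollary \ref{cor:interior} (Proposition C1) to guarantee that the simple root $\alpha_i$ is positive on the whole closed limit cone, and then invoking Theorem C of \cite{quantitative} for the functional $\alpha_i$. Your emphasis on needing positivity on the closed cone (not merely on each $\lambda(\rho\g)$) is precisely the point made in the remark following Proposition C1, so nothing is missing.
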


Concerning the growth indicator function, we show the following theorem inspired in the work of Quint\cite{quints} for Schottky groups of $G.$

\begin{teo1}[Corollary \ref{cor:tangente}] Let $\rho:\G\to G$ be a Zariski dense hyper\-con\-vex representation, then the growth indicator function $\psi_{\rho(\G)}:\frak a\to\R$ is strictly concave, analytic on the interior of $\cone_{\rho(\G)}$ and with vertical tangent on its boundary.
\end{teo1}

Fix some hyperconvex representation $\rho:\G\to G$ and denote $\psi_\rho$ for its growth indicator function. Since $\psi_\rho$ is strictly concave there exists a unique direction in the interior of the limit cone $\tau_{\rho}^{\|\ \|}\in\inter(\cone_\rho)$ such that the supremum of $\psi_\rho/\|\ \|$ is realized, this direction is called \emph{growth direction} of $\rho(\G)$ for the norm $\|\ \|$ (the uniqueness if this direction is also true for any Zariski dense subgroup $\grupo$ of $G$ assuming the norm $\|\ \|$ is euclidean, which we shall not).

By definition the set of points in $\{a(\rho\g):\g\in\G\}$ outside a given open cone containing $\tau_\rho$ has exponential growth rate strictly smaller than $h_\rho^{\|\ \|}.$

In order to prove theorem A we use dual objects associated to $\cone_\rho$ an $\psi_\rho$: if a linear functional $\varphi\in\frak a^*$ verifies $\varphi\geq\psi_\rho$ then $$\|\varphi\|=\sup_{v:\|v\|=1}\varphi(v)\geq \sup_{\|v\|=1}\psi_\rho(v)=h_\rho^{\|\ \|}.$$ One is then led to consider the set $$D_\rho=\{\varphi\in\frak a^*:\varphi\geq\psi_\rho\}.$$ This set is a subset of the dual cone $\cone^*_\rho=\{\varphi\in\frak a^*:\varphi|\cone_\rho\geq0\}$ since $\psi_\rho|\cone_\rho\geq0.$ 

We then relate the set $D_{\rho(\G)}$ with the thermodynamic formalism of the geodesic flow $\phi_t:\G\/ T^1\w M\mismo.$ This idea is already present in the work of Quint\cite{quints}, nevertheless the way to find this relation is different and this method has the advantage of extending to (for example) Hitchin representations of surface groups.







We now briefly explain this relation:

Recall that periodic orbits of the geodesic flow $\phi_t:\G\/T^1\w M$ are in correspondence with conjugation classes $[\g]\in[\G],$ and recall that the pressure of some potential $f:\G\/T^1X\to\R$ is defined as $$P(f)=\sup \{h(\phi_t,m)+\int fdm:m\ \phi_t\textrm{-invariant probability}\}$$ where $h(\phi_t,m)$ is the metric entropy of $\phi_t$ with respect to the measure $m.$ A probability maximizing $P(f)$ is called an equilibrium state of $f.$ The equilibrium state of $f$ is unique provided that $f$ is H\"older continuous.

Following Quint\cite{quint1} and Ledrappier\cite{ledrappier} one finds a $\G$-invariant H\"older continuous function $F_\rho:T^1\w M\to\frak a$ such that $$\int_{[\g]}F_\rho =\lambda( \rho\g).$$

We then show:

\begin{prop1}[Proposition \ref{prop:algo}] Let $\rho:\G\to G$ be a Zariski dense hyperconvex representation, then the set $D_{\rho(\G)}$ is the set of functionals $\varphi\in\frak a^*$ which are non negative on the limit cone such that $P(-\varphi(F_\rho))\leq0.$
\end{prop1}

We then find the following nice dynamical interpretation of the growth indicator. For $\varphi\in\frak a ^*$ denote $m_{\varphi}$ the equilibrium state of $\varphi(F_\rho):\G\/T^1\w M\to\R.$

\begin{cors}[Corollary \ref{cor:tangente}]Consider $\varphi_0\in\frak a^*$ tangent to $\psi_\rho,$ then the direction where $\varphi_0$ and $\psi_\rho$ are tangent is given by the vector $\int F_\rho dm_{\varphi_0}$ and the value of $\psi_\rho$ in this vector is the metric entropy of the geodesic flow for the equilibrium state $m_{\varphi_0}$ $$\psi_\rho(\int F_\rho dm_{\varphi_0})=h(\phi_t,m_{\varphi_0}).$$
\end{cors}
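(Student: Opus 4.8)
The plan is to exploit the duality between the growth indicator $\psi_\rho$ and the set $D_{\rho(\G)}$, combined with the regularity of the pressure, so that the convex geometry of $D_{\rho(\G)}$ gets translated into the thermodynamic formalism of $\phi_t$. I would begin by recording the elementary convex-analytic fact that, since $\psi_\rho$ is concave, upper semi-continuous and positively homogeneous of degree one (Quint's theorem),
\[
\psi_\rho(v)=\inf\{\varphi(v):\varphi\in D_{\rho(\G)}\}\qquad\text{for all }v\in\cone_\rho;
\]
that is, $D_{\rho(\G)}$ is exactly the set of linear supporting functionals of the hypograph of $\psi_\rho$. Let $\varphi_0\in\frak a^*$ be tangent to $\psi_\rho$; by the vertical tangent of $\psi_\rho$ along $\bord\cone_\rho$ (Theorem A) the contact direction $v_0$ lies in $\inter\cone_\rho$, and $\varphi_0(v_0)=\psi_\rho(v_0)$. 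Hence $\varphi_0$ realises the infimum above, so $\varphi_0$ minimises the evaluation $\varphi\mapsto\varphi(v_0)$ over the closed convex set $D_{\rho(\G)}$. Since $v_0\in\inter\cone_\rho$ this evaluation is coercive on the dual cone $\cono\supseteq D_{\rho(\G)}$, so the minimum is attained on $\bord D_{\rho(\G)}$; moreover $\varphi_0\notin\bord\cono$, for otherwise $\varphi_0$ would vanish on a ray $r\subset\bord\cone_\rho$ and then concavity of $\psi_\rho$ together with $0\le\psi_\rho\le\varphi_0$ on $\cone_\rho$ would force $\psi_\rho=\varphi_0$ on the $2$-dimensional subcone of $\cone_\rho$ spanned by $v_0$ and $r$, contradicting strict concavity (Theorem A). By Proposition \ref{prop:algo} it follows that $P(-\varphi_0(F_\rho))=0$.

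Next I would invoke the H\"older thermodynamic formalism of $\phi_t$: the map $g:\frak a^*\to\R$, $g(\varphi):=P(-\varphi(F_\rho))$, is convex and real-analytic, with derivative $dg_\varphi(\eta)=-\eta\big(\int F_\rho\,dm_\varphi\big)$, where $m_\varphi$ is the equilibrium state of $-\varphi(F_\rho)$; equivalently $\nabla g(\varphi)=-\int F_\rho\,dm_\varphi\in\frak a$. Since $m_{\varphi_0}$ is $\phi_t$-invariant, it is a weak-$*$ limit of normalised periodic orbit measures, so $\int F_\rho\,dm_{\varphi_0}$ is a limit of vectors $\ell(\g_n)^{-1}\lambda(\rho\g_n)\in\cone_\rho$; as $\cone_\rho$ is a closed cone, $\int F_\rho\,dm_{\varphi_0}\in\cone_\rho$, and by Corollary \ref{cor:interior} it lies in $\inter\frak a^+$, in particular it is non-zero. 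Therefore $\nabla g(\varphi_0)\neq 0$, so $\varphi_0$ is a smooth point of the analytic hypersurface $\{g=0\}$, which by Proposition \ref{prop:algo} coincides near $\varphi_0$ with $\bord D_{\rho(\G)}$; the inner normal to the sublevel set $\{g\le 0\}\supseteq D_{\rho(\G)}$ at $\varphi_0$ is $-\nabla g(\varphi_0)=\int F_\rho\,dm_{\varphi_0}$.

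On the other hand, $\varphi_0$ minimising $\varphi\mapsto\varphi(v_0)$ over $D_{\rho(\G)}$ says precisely that $v_0$ is an inner normal to $D_{\rho(\G)}$ at $\varphi_0$. Comparing with the previous step, $v_0$ and $\int F_\rho\,dm_{\varphi_0}$ span the same ray, and since the contact direction is defined only up to positive scaling, the direction where $\varphi_0$ and $\psi_\rho$ are tangent is the ray through $\int F_\rho\,dm_{\varphi_0}$. Finally, applying the variational principle to the potential $-\varphi_0(F_\rho)$ at its equilibrium state and using $P(-\varphi_0(F_\rho))=0$,
\[
0=h(\phi_t,m_{\varphi_0})-\varphi_0\Big(\int F_\rho\,dm_{\varphi_0}\Big),
\]
hence $h(\phi_t,m_{\varphi_0})=\varphi_0\big(\int F_\rho\,dm_{\varphi_0}\big)$. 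Since $\psi_\rho$ and $\varphi_0$ agree along the contact ray (by definition of tangency and homogeneity) and $\int F_\rho\,dm_{\varphi_0}$ lies on that ray, we get $\psi_\rho\big(\int F_\rho\,dm_{\varphi_0}\big)=\varphi_0\big(\int F_\rho\,dm_{\varphi_0}\big)=h(\phi_t,m_{\varphi_0})$, as claimed.

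The main difficulty is in transplanting the analytic information about the pressure into the convex geometry of $D_{\rho(\G)}$: one must check that near $\varphi_0$ the boundary of $D_{\rho(\G)}$ genuinely is the analytic hypersurface $\{g=0\}$ and not a piece of $\bord\cono$ (dealt with above), that $\varphi_0$ is a smooth boundary point (which is where one needs $\int F_\rho\,dm_{\varphi_0}\neq 0$), and that the strict concavity of $\psi_\rho$ (Theorem A) is matched by strict convexity of $g$ transverse to the relevant direction, without which uniqueness of the contact ray and of $m_{\varphi_0}$ would be unclear. The analyticity of the pressure and the derivative formula $\nabla g(\varphi)=-\int F_\rho\,dm_\varphi$ are classical in the H\"older thermodynamic formalism of Anosov flows (Ruelle; Parry--Pollicott), so the real content lies in this dictionary between the two dual pictures --- which is precisely what makes Proposition \ref{prop:algo} useful.
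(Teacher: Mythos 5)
Your proposal is correct in substance and rests on the same skeleton as the paper: the duality between $\psi_\rho$ and $D_{\rho(\G)}$, the identification of $\bord D_{\rho(\G)}$ with $\{\varphi: P(-\varphi(F_\rho))=0\}$, and the derivative formula $d\vo P(\varphi)=-\int F_\rho\,dm_\varphi$. The differences are two. Where the paper simply invokes Quint's duality lemma (Lemma \ref{lema:cono}) to pass from the strictly convex set $D_{\rho(\G)}$ with analytic boundary to the tangency statement, you re-derive the needed convex duality by hand (support description of $\psi_\rho$, normal cone at a smooth boundary point); this is equivalent content, just more self-contained. More interestingly, for the entropy identity the paper takes a detour: it reparametrizes the geodesic flow by $\varphi_0(F_\rho)$, applies Lemma \ref{lema:entropia2} and Abramov's formula, and uses Bowen's periodic-orbit characterization (Proposition \ref{prop:bowenentropia}) to get $h(\phi_t,m_{\varphi_0})/\int\varphi_0(F_\rho)\,dm_{\varphi_0}=h_{\varphi_0}=1$; you instead note that $m_{\varphi_0}$ is the equilibrium state of $-\varphi_0(F_\rho)$ and $P(-\varphi_0(F_\rho))=0$, so the variational principle gives $h(\phi_t,m_{\varphi_0})=\int\varphi_0(F_\rho)\,dm_{\varphi_0}$ at once. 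Your shortcut is valid and more elementary; the paper's route additionally exhibits $h_{\varphi_0}=1$, but that is not needed for the statement.

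One step is misjustified as written: to get $\int F_\rho\,dm_{\varphi_0}\neq0$ you argue that this vector lies in the closed cone $\cone_\rho$, which by Corollary \ref{cor:interior} sits in $\inter\frak a^+$; but $0$ belongs to every closed cone, so this does not exclude the zero vector. The fact is true and you already have the ingredients: having shown $\varphi_0\in\inter\cono$, Corollary \ref{cor:positiva2} makes $\varphi_0(F_\rho)$ cohomologous to a positive function, whence $\varphi_0\bigl(\int F_\rho\,dm_{\varphi_0}\bigr)>0$ and in particular $\int F_\rho\,dm_{\varphi_0}\neq0$ (this is exactly how Proposition \ref{prop:algo} shows that $0$ is a regular value of $\vo P$; Lemma \ref{lema:genera} gives the same conclusion). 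A small bookkeeping point: the characterization $D_{\rho(\G)}=\{\varphi:P(-\varphi(F_\rho))\le0\}$ that you attribute to Proposition \ref{prop:algo} (following the introduction's labelling) is Proposition \ref{prop:D} in the body of the paper; with these two repairs your argument goes through.
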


In the last section of this work we study continuity properties of these objects when the representation $\rho$ varies.

Say that $\rho:\G\to \pgl(d,\R)$ is \emph{strictly convex} if it is irreducible and admits two H\"older continuous $\rho$-equivariant maps $\xi:\bord\G\to\P(\R^d)$ and $\eta:\bord\G\to\P({\R^d}^*)$ such that $$\xi(x)\oplus\eta(y)$$ whenever $x$ and $y$ are distinct. 

\begin{prop1}[Proposition \ref{prop:continualambda1}] The functions $$\rho\mapsto \limsup_{s\to\infty} \frac{\log \#\{[\g]\in[\G]:\lambda_1(\rho\g)\leq s\}}s$$ and $$\rho\mapsto \limsup_{s\to\infty} \frac{\log \#\{[\g]\in[\G]:\lambda_1(\rho\g)-\lambda_d(\rho\g)\leq s\}}s$$ are continuous among strictly convex representations.
\end{prop1}

Consider a closed hyperbolic oriented surface $\E$ and denote $\Hitchin(\E,d)$ the Hitchin components of the space $$\hom(\pi_1(\E),\PGL d)/\PGL d.$$ Since Hitchin representations are hyperconvex and irreducible (Labourie\cite{labourie}) they are, in particular, strictly convex.

\begin{cors} The function $:\Hitchin(\E,d)\to\R$ $$\rho\mapsto\lim_{s\to\infty}\frac{ \log\#\{[\g]\in[\G]: (\lambda_1-\lambda_d )(\rho\g)\leq s\}}s$$ is continuous.
\end{cors}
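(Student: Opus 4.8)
The plan is to combine Proposition \ref{prop:continualambda1} with a precise orbit counting estimate, the only real work being to upgrade the $\limsup$ of that proposition to a genuine $\lim$. First I would recall, as noted above, that by Labourie\cite{labourie} every representation in $\Hitchin(\E,d)$ is hyperconvex and irreducible, hence strictly convex; Proposition \ref{prop:continualambda1} therefore shows that
$$\rho\longmapsto\limsup_{s\to\infty}\frac1s\log\#\{[\g]\in[\G]:\lambda_1(\rho\g)-\lambda_d(\rho\g)\leq s\}$$
is continuous on $\Hitchin(\E,d)$. It then suffices to prove that for each fixed $\rho\in\Hitchin(\E,d)$ this $\limsup$ is in fact a limit, for then it coincides with the quantity in the statement and inherits its continuity.

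To produce the limit I would apply theorem C in A.S.\cite{quantitative} to the linear functional $\varphi=\lambda_1-\lambda_d\in\frak a^*$ of $\PGL d$. Written in simple roots $\varphi=\alpha_1+\cdots+\alpha_{d-1}$, so $\varphi$ lies in the interior of the dual Weyl chamber and is strictly positive on $\frak a^+\setminus\{0\}$. If $\rho$ is moreover Zariski dense, Corollary \ref{cor:interior} places $\cone_\rho$ in the interior of $\frak a^+$, hence $\varphi>0$ on $\cone_\rho\setminus\{0\}$, and theorem C applies, yielding a precise asymptotic $\#\{[\g]:\varphi(\lambda(\rho\g))\leq s\}\sim e^{hs}/(hs)$ with $h=h(\rho)>0$; in particular the limit exists and equals $h(\rho)$. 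For the Hitchin representations that are not Zariski dense in $\PGL d$ I would transfer the argument to the Zariski closure $H\leq\PGL d$, which is reductive and in which $\rho$ is Zariski dense: after checking that the restriction of $\varphi$ to a Cartan subspace of $H$ is still positive on the limit cone of $\rho$ inside $H$, theorem C (resp.\ Corollary \ref{cor:interior}) applied within $H$ again supplies the asymptotic; in the Fuchsian case $H$ is the principal $\PSL(2,\R)$, $\varphi(\lambda(\rho\g))$ is a fixed positive multiple of the hyperbolic translation length $\ell_\rho(\g)$, and the classical prime geodesic theorem finishes it. A uniform alternative is also available: a Hitchin representation is Anosov, so the associated reparametrisation of the geodesic flow $\phi_t$ is H\"older, and the orbit counting theorem for such flows gives the asymptotic for $\#\{[\g]:\varphi(\lambda(\rho\g))\leq s\}$ over all of $\Hitchin(\E,d)$ at once.

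Granting this, for every $\rho\in\Hitchin(\E,d)$ the displayed $\limsup$ is a genuine limit and, by Proposition \ref{prop:continualambda1}, it varies continuously with $\rho$; this is the assertion of the corollary. I expect the main obstacle to be precisely this passage from $\limsup$ to $\lim$, and within it the non-Zariski-dense stratum of the Hitchin component --- including the Fuchsian locus present in every Hitchin component --- where Corollary \ref{cor:interior} and the Zariski-dense form of theorem C in A.S.\cite{quantitative} are not available verbatim and one must either invoke the classical prime geodesic theorem or carefully relocate the counting machinery to the Zariski closure.
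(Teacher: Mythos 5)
Your core argument is the paper's own: by Labourie's theorem every Hitchin representation is hyperconvex and irreducible, hence strictly convex, and Proposition \ref{prop:continualambda1} then gives continuity of $h_{1d}$ on $\Hitchin(\E,d)$ --- note that strict convexity, and hence that proposition, requires no Zariski density, so the Fuchsian locus causes no difficulty at this step. The one place you go beyond the paper is the upgrade from $\limsup$ to $\lim$, which the paper passes over in silence (the corollary is stated with a limit while Proposition \ref{prop:continualambda1} and Ledrappier's Lemma \ref{lema:lemaledrappier} are phrased with $\limsup$); your concern is legitimate, but the detour through Theorem C of \cite{quantitative} and the Zariski closure of a non-dense Hitchin representation is heavier than needed. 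The cleanest route is the one you call the ``uniform alternative'': the cocycle $\beta_{1d}$ has positive periods and finite exponential growth rate, so by Lemma \ref{lema:funcionpositiva} the function $F_{\beta_{1d}}$ is cohomologous to a positive potential, the numbers $(\lambda_1-\lambda_d)(\rho\g)$ are the periods of a H\"older reparametrization of the Anosov geodesic flow on $\G\/T^1\w M$, and for such reparametrizations the exponential growth rate of periods is a genuine limit. This works uniformly over the whole Hitchin component, avoids any case division on Zariski density, and closes the only gap in the paper's one-line deduction.
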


This particular function is shown to be analytic by Pollicott-Sharp\cite{pollicottsharp2}. In fact, what stops us from obtaining more regularity, is that the equivariant map varies (only?) continuously with the representation. This is a consequence of the Anosov property, shown to hold by Guichard-Weinhard\cite{guichardweinhard}.

We return now to Zariski dense hyperconvex representations. We remark that the work of Guichard-Weinhard\cite{guichardweinhard} implies that Zariski dense hyperconvex representations are an open set of the space of all representations $\G\to G.$

We show in corollary \ref{cor:conolimite} that the limit cone varies continuously with the representation. If one fixes a Zariski dense hyperconvex representation $\rho$ and an open cone $\scr C$ contained in the interior of $\cone_\rho,$ it will remain in the interior of the limit cone of all representations nearby. One can thus study the continuity of the growth indicator.

\begin{teoB}[Theorem \ref{teo:continuo}] Let $\rho_0:\G\to G$ be a Zariski dense hyperconvex representation and fix some closed cone $\scr C$ in the interior of the limit cone $\cone_{\rho_0}$ of $\rho_0.$ Consider some neighborhood $U$ of $\rho_0$ such that $\scr C$ is contained in $\inter(\cone_\rho)$ for every $\rho\in U,$ then the function $:U\to\R$ given by $$\rho\mapsto \psi_\rho|\scr C$$ is continuous.
\end{teoB}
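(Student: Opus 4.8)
The plan is to reduce the continuity of $\rho\mapsto\psi_\rho|\scr C$ to the continuity of the data feeding the thermodynamic description established in Proposition \ref{prop:algo} and Corollary \ref{cor:tangente}. By Quint's theorem, on $\inter(\cone_\rho)$ one has $\psi_\rho(v)=\inf\{\varphi(v):\varphi\in D_{\rho(\G)}\}$, and Proposition \ref{prop:algo} identifies $D_{\rho(\G)}$ with those $\varphi\in\cono$ satisfying $P(-\varphi(F_\rho))\leq0$; moreover the infimum defining $\psi_\rho(v)$ for $v$ in the interior is attained at the unique functional $\varphi_v$ tangent to $\psi_\rho$ at $v$, which by Corollary \ref{cor:tangente} is characterized by $\int F_\rho\,dm_{\varphi_v}=v$ (up to scaling) together with $P(-\varphi_v(F_\rho))=0$. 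So the whole problem is governed by the map $\rho\mapsto F_\rho$ and the behaviour of the pressure functional $(\rho,\varphi)\mapsto P(-\varphi(F_\rho))$.

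First I would establish that $\rho\mapsto F_\rho\in C(T^1\w M,\frak a)$ is continuous for the uniform norm — or at least that $P(-\varphi(F_\rho))$ varies continuously in $(\rho,\varphi)$. The function $F_\rho$ is built (following Quint \cite{quint1}, Ledrappier \cite{ledrappier}) from the equivariant map $\z_\rho$ and the Iwasawa/Busemann cocycle of $G$; since Guichard--Weinhard's Anosov property gives continuity of $\rho\mapsto\z_\rho$ in the uniform topology on the (compact) boundary $\bord\G$, and the cocycle is a fixed smooth object on $G\times\scr F$, composing yields continuity of $\rho\mapsto F_\rho$. Then, because the geodesic flow $\phi_t$ on $\G\/T^1\w M$ is a fixed Hölder/Anosov flow, the pressure $P:C(\G\/T^1\w M)\to\R$ is continuous (indeed $1$-Lipschitz: $|P(f)-P(g)|\leq\|f-g\|_\infty$), so $(\rho,\varphi)\mapsto P(-\varphi(F_\rho))$ is continuous; combined with continuity of $\rho\mapsto\cone_\rho$ from Corollary \ref{cor:conolimite} (and the hypothesis that $\scr C\subset\inter\cone_\rho$ uniformly on $U$), this shows $\rho\mapsto D_{\rho(\G)}$ varies continuously, e.g. in the Hausdorff sense on compact pieces of $\cono$.

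Next I would pass from continuity of $D_\rho$ to continuity of $\psi_\rho$ on $\scr C$. Fix $v\in\scr C$ with $\|v\|=1$. Upper semicontinuity of $\rho\mapsto\psi_\rho(v)$: if $\varphi_0\in D_{\rho_0(\G)}$ realizes $\psi_{\rho_0}(v)=\varphi_0(v)$, one needs nearby functionals $\varphi_\rho\in D_{\rho(\G)}$ with $\varphi_\rho(v)\to\varphi_0(v)$; since at the tangency point $P(-\varphi_0(F_{\rho_0}))=0$ and $v$ lies in the \emph{interior} of the cone, the tangent functional $\varphi_0$ is a regular point — the derivative of $\varphi\mapsto P(-\varphi(F_\rho))$ at $\varphi_0$ is the nonzero vector $-\int F_\rho\,dm_{\varphi_0}$ (a multiple of $v$) — so the implicit function theorem / convexity lets one perturb $\varphi_0$ within the constraint surface as $\rho$ moves. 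Lower semicontinuity: for $\varphi_\rho\in D_{\rho(\G)}$ realizing $\psi_\rho(v)$, the uniform bound $\scr C\subset\inter\cone_{\rho_0}$ forces $\{\varphi_\rho\}$ to stay in a compact subset of $\cono[\rho_0]$ (else $\varphi_\rho(v)\to+\infty$, contradicting $\varphi_\rho(v)=\psi_\rho(v)\leq h_\rho^{\|\ \|}$ which is bounded on $U$ by continuity of $\rho\mapsto F_\rho$), and any subsequential limit $\varphi_*$ satisfies $P(-\varphi_*(F_{\rho_0}))\leq0$ by continuity of the pressure, hence $\varphi_*\in D_{\rho_0(\G)}$ and $\liminf\psi_\rho(v)=\liminf\varphi_\rho(v)\geq\varphi_*(v)\geq\psi_{\rho_0}(v)$. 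Continuity in $v$ is already known (strict concavity, Theorem A), and the estimates above are locally uniform in $v\in\scr C$, so $\rho\mapsto\psi_\rho|\scr C$ is continuous as a map into $C(\scr C,\R)$.

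The main obstacle I anticipate is the upper-semicontinuity half — producing, near a tangency functional $\varphi_0$ for $\rho_0$, admissible functionals $\varphi_\rho$ for nearby $\rho$ without losing too much value. This is exactly where one must exploit that $v$ is \emph{interior} to the limit cone: there the constraint $P(-\varphi(F_\rho))=0$ cuts out a smooth hypersurface transverse to the ray $\R_{>0}v$ (the pressure is analytic in $\varphi$ with nondegenerate Hessian by the machinery behind Theorem A, its gradient being $-\int F_\rho\,dm_\varphi$), and one gets a genuine implicit-function-theorem selection $\rho\mapsto\varphi_\rho$. Handling the boundary of the cone is avoided by hypothesis since $\scr C$ is a \emph{closed} cone strictly inside $\inter\cone_{\rho_0}$; this is the reason the theorem is stated with that restriction rather than for $\psi_\rho$ globally. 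A secondary technical point is verifying that the auxiliary constructions ($F_\rho$, the pressure, the equilibrium states) depend on $\rho$ only through $\z_\rho$ and thus inherit its continuity — routine but needs the Anosov/Hölder bookkeeping of Guichard--Weinhard and Ledrappier to be spelled out.
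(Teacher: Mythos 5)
Your proposal is correct and follows essentially the same route as the paper: dualize via $D_\rho=\{\varphi:P(-\varphi(F_\rho))\leq0\}$ (Proposition \ref{prop:D}), get continuity of $\rho\mapsto F_\rho$ from the continuity of the equivariant maps (Proposition \ref{prop:GW}, Corollary \ref{cor:continuidadF}, Proposition \ref{prop:funcion}), and move the zero-pressure boundary $\bord D_\rho$ by an implicit-function-theorem argument at a regular point where the gradient is $-\int F_\rho\,dm_\varphi\neq0$ (Lemma \ref{lema:entropia}), then translate back to $\psi_\rho$ on the compact cone $\scr C$ by the duality lemma. The only difference is that you spell out the upper/lower semicontinuity and compactness-of-functionals steps that the paper's proof compresses into ``it suffices that $\bord D_\rho$ meets any neighborhood of a fixed $\varphi\in\bord D_{\rho_0}$.''
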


We then find the following corollary:

\begin{cors}[Corollary \ref{cor:entropia}] The function that associates to a Zariski dense hyperconvex representation $\rho$ the exponential growth rate $h_{\rho(\G)}^{\|\ \|}$ is continuous.
\end{cors}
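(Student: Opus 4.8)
The plan is to deduce Corollary \ref{cor:entropia} from Theorem B together with the continuity of the limit cone (Corollary \ref{cor:conolimite}) and the identity $h_{\rho(\G)}^{\|\ \|}=\sup_{v\in\frak a-\{0\}}\psi_\rho(v)/\|v\|$ established by Quint. The key point is that although Theorem B only gives continuity of $\rho\mapsto\psi_\rho|\scr C$ on a fixed closed subcone $\scr C$ of the interior of the limit cone, the supremum defining $h_{\rho(\G)}^{\|\ \|}$ is, by strict concavity of $\psi_\rho$ (Theorem A) and the vertical tangent on the boundary of $\cone_\rho$, actually attained at an \emph{interior} direction $\tau_\rho^{\|\ \|}$ of $\cone_\rho$. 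So only the behavior of $\psi_\rho$ on a compact interior region matters, and there we have continuity.

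In detail, first I would fix a Zariski dense hyperconvex representation $\rho_0$ and its growth direction $\tau_0:=\tau_{\rho_0}^{\|\ \|}\in\inter(\cone_{\rho_0})$. I would choose a closed cone $\scr C\subset\inter(\cone_{\rho_0})$ whose interior contains $\tau_0$, and then, using Corollary \ref{cor:conolimite}, a neighborhood $U$ of $\rho_0$ such that $\scr C\subset\inter(\cone_\rho)$ for all $\rho\in U$. Theorem B then says $\rho\mapsto\psi_\rho|\scr C$ is continuous on $U$; in particular $\rho\mapsto\sup_{v\in\scr C-\{0\}}\psi_\rho(v)/\|v\|$ is continuous on $U$ (this is a sup of a jointly continuous function over a fixed compact slice $\{v\in\scr C:\|v\|=1\}$). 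Since $\psi_{\rho_0}$ attains its global supremum of $\psi/\|\ \|$ at $\tau_0\in\inter(\scr C)$, we get $\sup_{v\in\scr C-\{0\}}\psi_{\rho_0}(v)/\|v\|=h_{\rho_0(\G)}^{\|\ \|}$, so at $\rho_0$ the $\scr C$-restricted supremum equals the full one.

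The remaining issue is the reverse inequality for $\rho$ near $\rho_0$: I must control $\psi_\rho$ \emph{outside} $\scr C$, i.e. rule out that the growth rate picks up extra mass in directions not in $\scr C$. Here I would argue by upper semicontinuity of the whole picture: because $\psi_{\rho_0}$ is strictly less than $h_{\rho_0(\G)}^{\|\ \|}\|v\|$ on the compact set $\{v\in\cone_{\rho_0}:\|v\|=1\}\setminus\inter(\scr C)$, and because $\psi_\rho$ is controlled from above on $\cone_\rho$ by (say) $h_{\rho(\G)}^{\|\ \|}\|v\|$ with this last quantity itself being upper semicontinuous in $\rho$ (a direct consequence of the fact that $a(\rho\g)$ depends continuously on $\rho$, so counting functions are upper semicontinuous), one obtains that for $\rho$ close to $\rho_0$ the supremum of $\psi_\rho/\|\ \|$ over directions outside $\scr C$ is bounded by $h_{\rho_0(\G)}^{\|\ \|}$ plus an error going to zero. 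Combined with the precise value on $\scr C$ from Theorem B, this sandwiches $h_{\rho(\G)}^{\|\ \|}$ and gives continuity. The main obstacle is precisely this last uniform control outside the fixed cone $\scr C$: Theorem B says nothing there, so one needs the a priori upper bound $\psi_\rho(v)\le h_{\rho(\G)}^{\|\ \|}\|v\|$ on $\cone_\rho$ together with upper semicontinuity of $\rho\mapsto h_{\rho(\G)}^{\|\ \|}$, and then a compactness argument on the sphere of $\frak a$ to conclude that the ``escaping'' directions cannot produce a bigger growth rate than the one already detected in $\scr C$.
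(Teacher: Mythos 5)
The lower-semicontinuity half of your argument is sound and is in the same spirit as the paper: by strict concavity the growth direction lies in $\inter(\cone_{\rho_0})$, so applying Theorem \ref{teo:continuo} to a closed cone $\scr C\subset\inter(\cone_{\rho_0})$ containing that direction gives $\liminf_{\rho\to\rho_0}h_{\rho(\G)}^{\|\ \|}\geq h_{\rho_0(\G)}^{\|\ \|}$. The gap is in the upper bound. You assert that $\rho\mapsto h_{\rho(\G)}^{\|\ \|}$ is upper semicontinuous ``as a direct consequence of the fact that $a(\rho\g)$ depends continuously on $\rho$, so counting functions are upper semicontinuous.'' This does not follow: continuity of $\rho\mapsto a(\rho\g)$ for each \emph{fixed} $\g$ gives no control uniform in $\g$, and an exponential growth rate (a $\limsup_{s\to\infty}\frac1s\log$ of counting functions) is in general neither upper nor lower semicontinuous under such pointwise perturbations; critical exponents of discrete groups can jump under deformation. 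To make a counting argument work you would need a locally uniform comparison of the type $\|a(\rho\g)\|\geq(1-\eps)\|a(\rho_0\g)\|-C$ valid for \emph{all} $\g$, which is precisely the kind of uniformity the hyperconvex (Anosov) structure provides but which you neither state nor derive from the quoted results. Note also that once upper semicontinuity of $h_{\rho(\G)}^{\|\ \|}$ is granted, your discussion of directions outside $\scr C$ is circular: that semicontinuity already \emph{is} the desired upper bound.

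Within the paper's framework the upper bound comes from duality, and that is how the paper argues: the growth form $\ta_\rho^{\|\ \|}$ (the functional tangent to $\psi_\rho$ at the growth direction, which is interior by strict concavity) satisfies $\ta_\rho^{\|\ \|}\geq\psi_\rho$ and $\|\ta_\rho^{\|\ \|}\|=h_{\rho(\G)}^{\|\ \|}$, and it varies continuously by Theorem \ref{teo:continuo}; continuity of $h_{\rho(\G)}^{\|\ \|}$ is then immediate. Equivalently, to repair your argument directly: given $\eps>0$ choose $\varphi\in\frak a^*$ with $\|\varphi\|\leq h_{\rho_0(\G)}^{\|\ \|}+\eps$ and $P(-\varphi(F_{\rho_0}))<0$ (an interior point of $D_{\rho_0}$, by Proposition \ref{prop:D}); by Proposition \ref{prop:funcion} and continuity of the pressure, $P(-\varphi(F_\rho))<0$ for $\rho$ near $\rho_0$, hence $\varphi\geq\psi_\rho$ and $h_{\rho(\G)}^{\|\ \|}\leq\|\varphi\|\leq h_{\rho_0(\G)}^{\|\ \|}+\eps$. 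Replacing your semicontinuity claim by this dual step closes the gap; the rest of your proposal then goes through.
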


\subsubsection*{Acknowledgments}

The author is extremely grateful to Jean-Fran\c cois Quint for useful discussions and guiding.

\section{Anosov flows and H\"older cocycles}

\subsubsection*{Reparametrizations}

Let $X$ be a compact metric space, $\phi_t:X\mismo$ a continuous flow on $X$ without fixed points and $f:X\to\R$ a positive continuous function. Set $\k:X\times\R\to\R$ as \begin{equation}\label{equation:k} \k(x,t)=\int_0^tf\phi_s(x)ds,\end{equation} if $t$ is positive, and $\k(x,t):=-\k(\phi_tx,-t)$ for $t$ negative. Thus, $\k$ verifies the cocycle property $\k(x,t+s)=\k(\phi_t x,s)+\k(x,t)$ for every $t,s\in\R$ and $x\in X.$

Since $f>0$ and $X$ is compact $f$ has a positive minimum and $\k(x,\cdot)$ is an increasing homeomorphism of $\R.$ We then have an inverse $\a:X\times\R\to\R$ that verifies \begin{equation}\label{equation:inversa} \a(x,\k(x,t))=\k(x,\alpha(x,t))=t\end{equation} for every $(x,t)\in X\times\R.$

\begin{defi}\label{defi:repa}The \emph{reparametrization} of $\phi_t$ by $f$ is the flow $\psi_t:X\mismo$ defined as $\psi_t(x):=\phi_{\a(x,t)}(x).$ If $f$ is H\"older continuous we shall say that $\psi_t$ is a H\"older reparametrization of $\phi_t.$
\end{defi}

We say that some function $U:X\to\R$ is $\clase^1$ \emph{in the flow's direction} if for every $p\in X$ the function $t\mapsto U(\phi_t(p))$ is of class $\clase^1$ and the function $$p\mapsto \left.\frac{\partial }{\partial t}\right|_{t=0}U(\phi_t(p))$$ is continuous. Two H\"older potentials $f,g:X\to\R$ are then said to be \emph{Liv\v sic cohomologous} if there exists a continuous $U:X\to\R,$ $\clase^1$ in the flow's direction, such that for all $p\in X$ one has $$f(p)-g(p)=\left.\frac{\partial}{\partial t}\right|_{t=0} U(\phi_t(p)).$$

\begin{obs} When two H\"older potentials $f,g:X\to\R_+^*$ are Liv\v sic cohomologous the reparametrization of $\phi_t$ by $f$ is conjugated to the reparametrization by $g,$ i.e. there exists a homeomorphism $h:X\to X$ such that for all $p\in X$ and $t\in\R$ $$h(\psi_t^fp) =\psi_t^g(hp).$$
\end{obs}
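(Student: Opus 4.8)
The plan is to exhibit the conjugacy explicitly as a ``shift along the flow'': a homeomorphism of the form $h(x)=\phi_{\sigma(x)}(x)$ for a continuous function $\sigma:X\to\R$. Such an $h$ automatically preserves every $\phi$-orbit, which is the natural thing to look for, since the two reparametrizations $\psi^f_t$ and $\psi^g_t$ have exactly the same orbits as $\phi_t$, only run at different speeds; the whole problem is to choose $\sigma$ so that the $\psi^f$-arclength along an orbit is carried to the $\psi^g$-arclength. Denote by $\kappa^f,\kappa^g$ the cocycles of \eqref{equation:k} associated to $f$ and $g$, and by $\alpha^f,\alpha^g$ their inverses as in \eqref{equation:inversa}.

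First I would reformulate the Livšic hypothesis at the level of cocycles: integrating $f-g=\left.\frac{\partial}{\partial t}\right|_{t=0}U\circ\phi_t$ along the flow gives, for all $x\in X$ and $t\in\R$, the identity $\kappa^f(x,t)-\kappa^g(x,t)=U(\phi_t x)-U(x)$. Thus the two time-change cocycles differ by a $\phi$-coboundary. Next I would define $\sigma(x)$ to be the unique real number with $\kappa^g(x,\sigma(x))=U(x)$, i.e. $\sigma(x):=\alpha^g(x,U(x))$; this is well defined because $\kappa^g(x,\cdot)$ is an increasing homeomorphism of $\R$ (as $g$ has positive minimum on the compact space $X$), and it is continuous in $x$ because $(x,v)\mapsto\alpha^g(x,v)$ is jointly continuous, again by uniform positivity of $g$. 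Set $h(x):=\phi_{\sigma(x)}(x)$.

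The third step is the verification $h\circ\psi^f_t=\psi^g_t\circ h$. Writing $q=\psi^f_t(p)=\phi_{\alpha^f(p,t)}(p)$, one has $h(\psi^f_t p)=\phi_{\sigma(q)+\alpha^f(p,t)}(p)$ and $\psi^g_t(h(p))=\phi_{\alpha^g(h(p),t)+\sigma(p)}(p)$; using the cocycle property of $\kappa^g$, the defining relation $\kappa^g(x,\sigma(x))=U(x)$, and $U(\phi_s x)=\kappa^f(x,s)-\kappa^g(x,s)+U(x)$ from the first step, both exponents collapse to $\alpha^g(p,t+U(p))$, so the two sides coincide. Finally I would show $h$ is a homeomorphism by producing its inverse: since $g-f=\left.\frac{\partial}{\partial t}\right|_{t=0}(-U)\circ\phi_t$, the symmetric construction yields $h'(y):=\phi_{\alpha^f(y,-U(y))}(y)$, and a short computation with the cocycle property of $\kappa^f$ together with $\kappa^f(x,\sigma(x))=\kappa^g(x,\sigma(x))+U(h(x))-U(x)=U(h(x))$ gives $h'\circ h=\id$ and $h\circ h'=\id$. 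This produces the required conjugacy.

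Nothing here is deep; the only points requiring care are the continuity of $\sigma$ (equivalently, continuity of the inverse cocycle $\alpha^g$, which is where compactness of $X$ and $\inf g>0$ enter) and the bookkeeping of base-points in the third and fourth steps, so that each cocycle identity is invoked at the correct point of the orbit. Note also that $h$ is unambiguous on periodic orbits: if $\phi_T x=x$ then $\kappa^f(x,T)=\kappa^g(x,T)+U(\phi_T x)-U(x)$ forces no inconsistency and $U$ (hence $\sigma$) is genuinely a function on such orbits, so $h$ is globally well defined.
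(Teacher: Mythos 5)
Your argument is correct and complete; the paper states this remark without proof, and your explicit conjugacy is exactly the standard construction one would supply. The reduction of the Liv\v sic hypothesis to the cocycle identity $\kappa^f(x,t)-\kappa^g(x,t)=U(\phi_t x)-U(x)$, the definition $h(x)=\phi_{\sigma(x)}(x)$ with $\sigma(x)=\alpha^g(x,U(x))$, the verification that both exponents collapse to $\alpha^g\bigl(p,t+U(p)\bigr)$, and the inverse $h'(y)=\phi_{\alpha^f(y,-U(y))}(y)$ via $\kappa^f(x,\sigma(x))=U(h(x))$ are all correct, with continuity of $\sigma$ indeed coming from $\inf g>0$ and compactness of $X$. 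The only superfluous point is the closing remark about periodic orbits: since $h$ is defined pointwise by a formula, no well-definedness issue ever arises there.
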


If $m$ is a $\phi_t$-invariant probability on $X$ and $\psi_t$ is the reparametrization of $\phi_t$ by $f,$ then the probability $m'$ defined by $dm'/dm(\cdot)=f(\cdot)/m(f)$ is $\psi_t$-invariant. In particular, if $\tau$ is a periodic orbit of $\phi_t$ then it is also periodic for $\psi_t$ and the new period is $$\int_\tau f=\int_0^{p(\tau)}f(\phi_s(x))ds$$ for where $p(\tau)$ is the period of $\tau$ for $\phi_t$ and $x\in\tau.$ This relation between invariant probabilities induces a bijection and Abramov\cite{abramov} relates the corresponding metric entropies: \begin{equation}\label{eq:abramov}h(\psi_t,m')=h(\phi_t,m)/\int fdm.\end{equation}

Denote $\cal M^{\phi_t}$ the set of $\phi_t$-invariant probabilities. The \emph{pressure} of a continuous function $f:X\to\R$ is defined as $$P(\phi_t,f)=\sup_{m\in\cal M^{\phi_t}}h(\phi_t,m)+\int_X fdm.$$ A probability $m$ such that the supremum is attained is called an \emph{equilibrium state} of $f.$ An equilibrium state for the potential $f\equiv0$ is called a probability with maximal entropy and its entropy is called the topological entropy of $\phi_t,$ denoted $h_{\textrm{top}}(\phi_t).$

\begin{lema}[\S2 of A.S.\cite{quantitative}]\label{lema:entropia2} Consider $\psi_t:X\mismo$ the reparametrization of $\phi_t:X\mismo$ by $f:X\to\R_+^*,$ and assume that $h_{\tope}(\psi_t)$ is finite. Then the bijection $m\mapsto m'$ induces a bijection between equilibrium states of $-h_{\tope}(\psi_t)f$ and probabilities of maximal entropy of $\psi_t.$
\end{lema}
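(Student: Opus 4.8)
The plan is to chase the two correspondences — the Abramov bijection $m\mapsto m'$ between $\cal M^{\phi_t}$ and $\cal M^{\psi_t}$, and the characterization of equilibrium states as maximizers of a pressure functional — and show they are compatible. Start from the Abramov relation \eqref{eq:abramov}, which says $h(\psi_t,m')=h(\phi_t,m)/\int f\,dm$, together with the tautological observation that if $F:X\to\R$ is continuous then $\int F\,dm' = \int Ff\,dm / \int f\,dm$ by the definition $dm'/dm = f/m(f)$. The goal is to compare, for a fixed constant $c$ (which will be $c=-h_{\tope}(\psi_t)$), the quantity $h(\phi_t,m)+\int cf\,dm$ (the $\phi_t$-pressure integrand of the potential $cf$) with $h(\psi_t,m')$ (the $\psi_t$-entropy, i.e. the pressure integrand of the zero potential for $\psi_t$).

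First I would write $h(\phi_t,m)+c\int f\,dm = \big(\int f\,dm\big)\big(h(\psi_t,m')+c\big)$, using \eqref{eq:abramov} to substitute $h(\phi_t,m) = \big(\int f\,dm\big)h(\psi_t,m')$. Now observe that $h(\psi_t,m')+c = h(\psi_t,m') - h_{\tope}(\psi_t) \le 0$ for every $m'\in\cal M^{\psi_t}$, with equality precisely when $m'$ is a measure of maximal entropy for $\psi_t$; and $\int f\,dm$ is bounded away from $0$ and $\infty$ since $f$ is a positive continuous function on the compact space $X$. Hence the product $\big(\int f\,dm\big)\big(h(\psi_t,m')+c\big)$ is $\le 0$, it equals $0$ exactly when $h(\psi_t,m')+c=0$, and otherwise — since $\int f\,dm>0$ — it is strictly negative. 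Therefore the supremum of $m\mapsto h(\phi_t,m)+c\int f\,dm$ over $\cal M^{\phi_t}$ equals $0$ (using finiteness of $h_{\tope}(\psi_t)$ to know a maximal measure, or a maximizing sequence, exists and that the supremum is not $-\infty$), and it is attained at $m$ if and only if the Abramov-corresponding $m'$ satisfies $h(\psi_t,m')=h_{\tope}(\psi_t)$. That is exactly the assertion: $m$ is an equilibrium state of $cf = -h_{\tope}(\psi_t)f$ for $\phi_t$ iff $m'$ is a probability of maximal entropy for $\psi_t$, and the bijection $m\mapsto m'$ restricts to a bijection between these two sets.

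The one point requiring a little care — and the main (mild) obstacle — is the passage between ``$P(\phi_t,-h_{\tope}(\psi_t)f)=0$ and the sup is attained'' versus merely ``the sup is $\le 0$''. Here one uses that $h_{\tope}(\psi_t)$ is assumed finite, which (together with expansiveness / the Anosov setting in which this lemma is applied, or simply upper semi-continuity of $m'\mapsto h(\psi_t,m')$) guarantees $\psi_t$ has a measure of maximal entropy; pulling it back through $m'\mapsto m$ produces an $m$ attaining the value $0$, so the pressure is genuinely $0$ and equilibrium states exist. Conversely any equilibrium state has pressure-integrand equal to this maximal value $0$, forcing $h(\psi_t,m')+c=0$. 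I would also note explicitly that $m\mapsto m'$ is a genuine bijection of the full spaces of invariant probabilities (this is the standard Abramov correspondence for reparametrizations recalled just above the lemma), so restricting it to the equilibrium-state sets on each side is legitimate. No further input is needed; everything else is the bookkeeping of the two displayed identities.
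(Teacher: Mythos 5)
Your argument is correct and is essentially the standard one: the paper itself only cites the proof from \S2 of A.S.\cite{quantitative}, and the computation there is the same Abramov-based identity $h(\phi_t,m)+c\int f\,dm=\bigl(\int f\,dm\bigr)\bigl(h(\psi_t,m')+c\bigr)$ with $c=-h_{\tope}(\psi_t)$, combined with the variational principle and the bijectivity of $m\mapsto m'$. Your remark that a maximizing sequence already forces $P(\phi_t,-h_{\tope}(\psi_t)f)=0$ (so attainment is not needed to identify the pressure) is exactly the right way to handle the only delicate point.
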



\subsubsection*{Anosov flows}

Assume from now on that $X$ is a compact manifold and that the flow $\phi_t:X\mismo$ is $\clase^1.$ We say that $\phi_t$ is \emph{Anosov} if the tangent bundle of $X$ splits as a sum of three $d\phi_t$-invariant bundles $$ TX=E^s\oplus E^0\oplus E^u,$$ and there exist positive constants $C$ and $c$ such that: $E^0$ is the direction of the flow and for every $t\geq0$ one has: for every $v\in E^s$ $$\|d\phi_tv\|\leq Ce^{-ct}\|v\|,$$ and for every $v\in E^u$ $\|d\phi_{-t}v\|\leq Ce^{-ct}\|v\|.$ 



One can compute the topological entropy of a reparametrization of an Anosov flow as the exponential growth rate of its periodic orbits.

\begin{prop}[Bowen\cite{Bowen1}]\label{prop:bowenentropia} Let $\psi_t:X\mismo$ be a reparametrization of an Anosov flow, then the topological entropy of $\psi_t$ is $$h_{\tope}(\psi_t)=\limsup_{s\to\infty}\frac{\log\#\{\tau\textrm{ periodic}:p(\tau)\leq s\}}s,$$ where $p(\tau)$ is the period of $\tau$ for $\psi_t.$
\end{prop}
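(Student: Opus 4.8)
The plan is to reduce the statement to Bowen's counting theorem by checking that a reparametrization of an Anosov flow still carries the two structural features that make that theorem work: expansiveness (in the Bowen--Walters sense for flows) and the specification property. Since $X$ is compact and $f$ is continuous and strictly positive, one has $0<\min f\le f\le\max f<\infty$, so by \eqref{equation:k} and \eqref{equation:inversa} the cocycle $\k(x,\cdot)$ and its inverse $\a(x,\cdot)$ are uniformly bi-Lipschitz perturbations of the identity; in particular $\psi_t$ is again a continuous flow without fixed points, its orbits coincide as subsets of $X$ with those of $\phi_t$, and the period of a closed orbit changes only by a factor between $1/\max f$ and $1/\min f$. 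Because both expansiveness and specification are statements about the orbits up to a reparametrization of time, they pass from $\phi_t$ — which, being Anosov, is expansive and has the specification property — to $\psi_t$. Checking this transfer cleanly, i.e. controlling the time-change homeomorphisms appearing in the two definitions, is the first technical point.

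The heart of the matter is then the general assertion that an expansive flow $\psi_t$ with the specification property satisfies $h_{\tope}(\psi_t)=\limsup_{s\to\infty}\tfrac1s\log N(s)$, with $N(s)=\#\{\tau:p(\tau)\le s\}$. For ``$\ge$'': if $\eps$ is below the expansiveness constant, a short argument using expansiveness together with the finiteness of the set of closed orbits of bounded period shows that choosing one point on each closed orbit of period $\le s$ yields an $(s,\eps)$-separated set, so $N(s)$ is at most the maximal cardinality of an $(s,\eps)$-separated set; expansiveness makes that entropy count independent of small $\eps$, so one gets $\limsup_s\tfrac1s\log N(s)\le h_{\tope}(\psi_t)$. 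For ``$\le$'': take a maximal $(s,\eps)$-separated set $E$; for each $x\in E$ apply specification to produce a closed orbit of period at most $s+T(\eps)$ that $\eps/3$-shadows the segment $\{\psi_t x:0\le t\le s\}$; a point-counting estimate shows that only a bounded number, independent of $s$, of points of $E$ can produce the same closed orbit, so $N(s+T(\eps))$ is, up to a bounded factor, at least $\#E$, and letting $s\to\infty$ yields the reverse inequality. This bookkeeping is exactly Bowen's, which is why one may simply cite \cite{Bowen1}.

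An alternative and arguably cleaner route — which also makes the appeal to \eqref{eq:abramov} transparent — is to pass through symbolic dynamics. Bowen's Markov sections give a finite-to-one H\"older semiconjugacy between $\phi_t$ and a suspension flow over a subshift of finite type $(\E,\sigma)$ with strictly positive H\"older roof function $r$, and reparametrizing by $f$ replaces $r$ by the H\"older roof function $\widehat f(x)=\int_0^{r(x)}f(\phi_s(\pi x))\,ds$. For a suspension flow over a subshift of finite type both the topological entropy and the exponential growth rate of closed orbits equal the unique $h$ with $P_\sigma(-h\widehat f)=0$ — the former by Abramov's formula \eqref{eq:abramov} and the variational principle, the latter by the classical prime-orbit estimates over subshifts — and a finite-to-one semiconjugacy alters neither quantity. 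Here the main obstacle is, again, purely technical: verifying that the semiconjugacy descends to the reparametrized flow with the new roof function, and that the finitely-to-one identifications create or destroy neither entropy nor closed orbits. In both routes the Anosov hypothesis enters only through expansiveness and specification, so the conclusion holds verbatim for any H\"older reparametrization.
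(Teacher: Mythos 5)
The paper gives no proof of this proposition: it is quoted from Bowen, and the surrounding text (the remark immediately after, on Markov partitions and suspensions of subshifts of finite type) indicates that the intended justification is precisely your second route. That route is also the right way to handle the point you correctly identified as the real issue: $\psi_t$ is only a H\"older time change of $\phi_t$, so it is not itself Anosov (not even $\clase^1$ in general), and Bowen's theorem cannot be applied to $\psi_t$ verbatim. Passing to the suspension over the subshift with the new roof function $\widehat f(x)=\int_0^{r(x)}f(\phi_s(\pi x))\,ds$, computing both the entropy (Abramov plus the variational principle, i.e. the equation $P_\sigma(-h\widehat f)=0$) and the closed-orbit growth rate (Ledrappier/Parry--Pollicott type counting over the shift) and checking that the bounded-to-one coding changes neither, is exactly the standard argument, and your sketch of it is correct, including the caveat that one must verify every closed orbit of the flow is the image of a symbolic closed orbit of the same period with bounded multiplicity.

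In your first route, however, one step is wrong as stated: picking one point on each closed orbit of period at most $s$ does \emph{not} in general produce an $(s,\eps)$-separated set. Two distinct closed orbits with periods $p_1,p_2\le s$ can $\eps$-shadow each other, with the identity time change, throughout the whole window $[0,s]$; symbolically, periodic words of periods $2$ and $3$ can agree on a window of length $3$, and a Fine--Wilf type argument only forces coincidence once the window has length on the order of $p_1+p_2$. Expansiveness excludes $\delta$-closeness for \emph{all} $t\in\R$ (up to reparametrization), not on a window merely comparable to the periods, so the bound $N(s)\le\#\{(s,\eps)\textrm{-separated set}\}$ does not follow as claimed, and a naive repair (separating only over a window of length $\sim 2s$) loses a factor $2$ in the exponent. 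The upper bound on orbit growth therefore requires Bowen's finer counting (done through the symbolic model or zeta-function estimates), or one cites as a black box the theorem that an expansive flow with specification has entropy equal to its closed-orbit growth rate -- which is the statement that actually survives the time change, since expansiveness and specification are reparametrization-invariant while the Anosov property is not. With that substitution (and the transfer of expansiveness and specification under the time change checked, as you flag), your first route is also sound; as written, it has this one genuine gap.
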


As shown by Bowen\cite{bowen2}  transitive Anosov flows admit Markov partitions and thus the ergodic theory of suspension of sub shifts of finite type extends to this flows.

\begin{prop}[Bowen-Ruelle\cite{bowenruelle}]\label{teo:ruellebowen} Let $\phi_t:X\mismo$ be a transitive Anosov flow. Then given a H\"older potential $f:X\to\R$ there exists a unique equilibrium state for $f.$
\end{prop}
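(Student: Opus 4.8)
The plan is to reduce the statement to the (completely understood) thermodynamic formalism of subshifts of finite type, via the symbolic dynamics furnished by a Markov partition. Since $\phi_t$ is a transitive Anosov flow, Bowen's construction (cited above) produces a transitive subshift of finite type $(\Sigma,\sigma)$, a strictly positive H\"older roof function $r:\Sigma\to\R_+^*$, and a surjective semiconjugacy $\pi:\Sigma_r\to X$ from the suspension flow $\sigma^r_t:\Sigma_r\mismo$ onto $\phi_t$, which is H\"older, finite-to-one, and injective on a set of full measure for every invariant measure. The first step is to record that $\pi$ induces, by push-forward, a bijection between $\sigma^r_t$-invariant probabilities and $\phi_t$-invariant probabilities preserving both entropy and the integral of any continuous potential; the only delicate point here, handled by an inclusion--exclusion over the finitely many "boundary" overlaps of the Markov rectangles, is that no invariant probability charges the set where $\pi$ fails to be injective.

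Next I would transfer the problem from the suspension to its base. Given the H\"older potential $f:X\to\R$, set $f^\flat=f\circ\pi$ on $\Sigma_r$ and define its fibre integral $\bar f:\Sigma\to\R$ by $\bar f(x)=\int_0^{r(x)}f^\flat(x,s)\,ds$, which is again H\"older. Since $r$ is bounded away from zero, the map $p\mapsto P_\sigma(\bar f-p\,r)$ is continuous, strictly decreasing, and tends to $\mp\infty$ as $p\to\pm\infty$, so there is a unique real number $p$ with $P_\sigma(\bar f-p\,r)=0$. Using Abramov's formula \eqref{eq:abramov} together with the identity $\int f\,dm=\big(\int\bar f\,d\mu\big)/\big(\int r\,d\mu\big)$ relating measures $m$ and $\mu$ that correspond under the suspension, one checks on the one hand that $P(\phi_t,f)=p$, and on the other hand that a $\sigma^r_t$-invariant probability $m$ is an equilibrium state of $f^\flat$ if and only if the associated $\sigma$-invariant probability $\mu$ is an equilibrium state of the H\"older potential $g:=\bar f-p\,r$. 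Combined with the first step, this reduces the theorem to the assertion that $g$ has a unique equilibrium state on the transitive subshift of finite type $(\Sigma,\sigma)$.

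The final step is the Ruelle--Perron--Frobenius theorem for subshifts of finite type: for a H\"older potential $g$ on a transitive subshift of finite type, the Ruelle transfer operator $\scr L_g$ has a simple leading eigenvalue $e^{P_\sigma(g)}$ with strictly positive H\"older eigenfunction and a spectral gap on the space of H\"older functions; the resulting Gibbs measure satisfies the variational principle with equality and is the only invariant probability doing so, hence the unique equilibrium state of $g$. Transporting this measure forward along the suspension and then along $\pi$ yields an equilibrium state $m$ for $f$ on $X$, and the measure bijections of the first two steps show it is the only one. I expect the main obstacle to lie in the first step: controlling the failure of injectivity of the Markov coding of a \emph{flow} (where, unlike for diffeomorphisms, the bad set is not disposed of by a single null-set argument) so as to guarantee that entropies and integrals of potentials are genuinely unchanged under the identifications along the boundaries of the Markov rectangles. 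This bookkeeping over the finitely many overlaps is precisely the technical heart of Bowen--Ruelle's original argument.
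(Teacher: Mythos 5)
The paper gives no proof of this proposition at all: it is quoted verbatim from Bowen--Ruelle, and your outline is essentially a reconstruction of their symbolic-dynamics argument (Markov partition, suspension of a subshift of finite type, reduction to the base via the roof function and Abramov's formula, then Ruelle--Perron--Frobenius). Your second and third steps are correct as stated.

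The step that fails as written is the first. It is not true that the coding map $\pi:\Sigma_r\to X$ is injective off a set that is null for \emph{every} invariant probability, nor that push-forward is a bijection on invariant measures: a periodic orbit of the flow lying in the boundaries of the Markov rectangles carries an invariant probability concentrated exactly on the non-injectivity set, and such a measure generally has several distinct lifts; no inclusion--exclusion over the boundary overlaps removes this. The standard repair is different and in fact lighter than what you propose. One needs only (i) that every $\phi_t$-invariant probability admits at least one $\sigma^r_t$-invariant lift (take any measure-theoretic lift and average along the flow, using compactness), and (ii) that $\pi$ is finite-to-one, so that by the Bowen/Ledrappier--Walters inequality the fibers carry no entropy and $h(\nu)=h(\pi_*\nu)$ for every invariant $\nu.$ These two facts give equality of the pressures of $f$ and $f\circ\pi,$ show that any equilibrium state of $f$ downstairs lifts to an equilibrium state of $f\circ\pi$ upstairs, and hence the uniqueness you establish upstairs in steps two and three pushes down to existence and uniqueness for $f$ on $X,$ with no control whatsoever on the measure of the boundary set. (Bowen--Ruelle themselves handle uniqueness by a related but distinct route, showing the symbolically constructed measure is Gibbs and exploiting expansiveness; either way, the a.e.-injectivity claim you lean on is not the actual mechanism and should be replaced.)
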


\begin{prop}[cf. Ruelle\cite{ruelle}-Ratner\cite{ratnerpresion}]\label{prop:pression} Let $\phi_t:X\mismo$ be a transitive Anosov flow and $f,g:X\to\R$ be H\"older continuous. Then the function $t\mapsto P(f-tg)$ is analytic and $$\left.\frac{\partial P(f-tg)}{\partial t}\right|_{t=0}=-\int gdm_f$$ where $m_f$ is $f$'s equilibrium state. If $\int g dm_f=0$ and $$\left.\frac{\partial^2 P(f-tg)}{\partial t^2}\right|_{t=0}=0$$ then $g$ is cohomologous to zero. Thus, if $g$ is not cohomologically trivial and $\int gdm_f=0$ then $t\mapsto P(f-tg)$ is strictly convex.
\end{prop}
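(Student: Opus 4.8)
\emph{Sketch of proof.} The plan is to pass to symbolic dynamics and reduce the whole statement to the analyticity of subshift pressure. Since $\phi_t$ is a transitive Anosov flow it carries a Markov partition (Bowen\cite{bowen2}), so after a H\"older semiconjugacy which is finite-to-one and bijective off a proper closed flow-invariant set — hence harmless for pressures and equilibrium states — the flow is the suspension of an irreducible subshift of finite type $(\Sigma,\sigma)$ under a H\"older roof function $r:\Sigma\to\R_{>0}$. For a H\"older potential $h$ on $X$ I would introduce its fiber integral $\Delta h(x):=\int_0^{r(x)}h(\phi_u x)\,du$, which is again H\"older on $\Sigma$, and use the Abramov\cite{abramov} entropy relation together with the definition of pressure to identify $P(\phi_t,h)$ with the unique $c\in\R$ solving $P_\sigma(\Delta h-c\,r)=0$, where $P_\sigma$ is the pressure of $(\Sigma,\sigma)$. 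Taking $h=f-tg$, it then suffices to show that $(t,c)\mapsto P_\sigma(\Delta f-t\Delta g-c\,r)$ is real-analytic with strictly negative derivative in $c$ — that derivative being $-\int r\,d\mu$ for the relevant equilibrium state $\mu$ — and to extract everything about $c(t)=P(\phi_t,f-tg)$ from the implicit function theorem.

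Next I would recall why the subshift pressure is analytic in its potential. On a suitable space of H\"older functions the Ruelle transfer operator $L_\psi u(x)=\sum_{\sigma y=x}e^{\psi(y)}u(y)$ has $e^{P_\sigma(\psi)}$ as a simple isolated eigenvalue (Ruelle--Perron--Frobenius), and $\psi\mapsto L_\psi$ is real-analytic, so Kato's analytic perturbation theory for that eigenvalue gives analyticity of $\psi\mapsto P_\sigma(\psi)$ together with $D_\psi P_\sigma\cdot\chi=\int\chi\,d\mu_\psi$ and $D^2_\psi P_\sigma\cdot(\chi,\chi)=\mathrm{var}_{\mu_\psi}(\chi):=\lim_{n\to\infty}\tfrac{1}{n}\int\big(S_n\chi-n\int\chi\,d\mu_\psi\big)^2d\mu_\psi\ge0$, the asymptotic variance, where $\mu_\psi$ is the unique equilibrium state and $S_n\chi$ a Birkhoff sum (Ruelle\cite{ruelle}, Ratner\cite{ratnerpresion}). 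Differentiating $P_\sigma(\Delta f-t\Delta g-c(t)r)=0$ once, and using the standard correspondence between the flow equilibrium state $m_{f-tg}$ and the base equilibrium state $\mu_t$ of $\Delta f-t\Delta g-c(t)r$ (under which $\int\psi\,dm_{f-tg}=\int\Delta\psi\,d\mu_t\,\big/\,\int r\,d\mu_t$), yields $c'(t)=-\int g\,dm_{f-tg}$, which at $t=0$ is the first displayed formula; differentiating again yields $c''(t)\int r\,d\mu_t=\mathrm{var}_{\mu_t}(\Delta g+c'(t)r)\ge0$.

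Finally, under $\int g\,dm_f=0$ one has $c'(0)=0$ and $c''(0)\int r\,d\mu_0=\mathrm{var}_{\mu_0}(\Delta g)$; if $c''(0)=0$ then the asymptotic variance of $\Delta g$ for $\mu_0$ vanishes, which by the classical characterization of degenerate variance for subshifts forces $\Delta g$ to be Liv\v sic-cohomologous over $\sigma$ to the constant $\int\Delta g\,d\mu_0=0$, say $\Delta g=w\circ\sigma-w$ with $w$ H\"older. I would then transfer this back to the flow: $W(x,s):=w(x)+\int_0^s g(\phi_u(x,0))\,du$ for $0\le s<r(x)$ is consistent across $(x,r(x))\sim(\sigma x,0)$ exactly because $\Delta g(x)=w(\sigma x)-w(x)$, and produces a continuous $W:X\to\R$, of class $\clase^1$ in the flow's direction, with $\partial_t|_{t=0}W(\phi_t p)=g(p)$, i.e. $g$ is cohomologous to zero. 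The last assertion is then immediate: $t\mapsto P(\phi_t,f-tg)$ is convex (a supremum of functions affine in the potential) and, by the above, real-analytic, so if it were not strictly convex it would be affine on a subinterval, forcing $c''\equiv0$ and in particular $c''(0)=0$, whence $g$ would be cohomologous to zero, contrary to hypothesis. The main obstacle will be the bookkeeping in this symbolic dictionary: tracking the roof function $r$ through the implicit function theorem, and above all the Liv\v sic-type transfer of coboundaries from $(\Sigma,\sigma)$ to $(X,\phi_t)$ — in particular checking that the function $W$ it produces is genuinely continuous on $X$ and $\clase^1$ along the flow, rather than merely defined on the abstract suspension.
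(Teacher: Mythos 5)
This proposition is not proved in the paper: it is quoted as a known result of the thermodynamic formalism, with the proof deferred to Ruelle\cite{ruelle} and Ratner\cite{ratnerpresion} (see also Bowen--Ruelle and Parry--Pollicott). Your sketch reconstructs exactly that standard argument --- Markov partition, reduction to the suspension of a subshift, implicit characterization $P_\sigma(\Delta f-t\Delta g-c(t)r)=0$, analytic perturbation of the Ruelle operator, and the degenerate-variance criterion for coboundaries --- and the derivative computations ($c'(t)=-\int g\,dm_{f-tg}$ and $c''(t)\int r\,d\mu_t=\mathrm{var}_{\mu_t}(\Delta g+c'(t)r)$) are correct, as is the concluding use of analyticity to pass from ``not strictly convex'' to $c''\equiv 0$.

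On the one obstacle you flag: you do not need to descend the transfer function $W$ from the abstract suspension to $X$, which is indeed delicate on the boundaries of the Markov rectangles where the semiconjugacy fails to be injective. Once $\Delta g=w\circ\sigma-w$, every periodic orbit of the flow lifts to a $\sigma$-periodic point, so $\int_\tau g=0$ for all periodic orbits $\tau$; Liv\v sic's theorem applied directly to the Anosov flow $\phi_t$ then produces a H\"older function on $X$, $\clase^1$ along the flow, whose derivative in the flow direction is $g$. This is the standard way to close the argument and avoids the descent issue entirely.
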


We will need the following lemma of Ledrappier\cite{ledrappier}.

\begin{lema}[Ledrappier\cite{ledrappier}, page 106]\label{lema:lemaledrappier} Consider some potential $f:X\to\R$ such that $\int_\tau f\geq0$ for every periodic orbit $\tau.$ If the number $$h:=\limsup_{s\to\infty} \frac{\log\#\{\tau\textrm{ periodic}: \int_\tau f\leq s\}}s$$ belongs to $(0,\infty)$ then $P(-hf)=0.$ Conversely, if $P(-s_0f)=0$ for some $s_0\in(0,\infty)$ then $$s_0=\limsup_{s\to\infty} \frac{\log\#\{\tau\textrm{ periodic}: \int_\tau f\leq s\}}s=h.$$ If this is the case $$0<\inf_{\tau \textrm{ periodic}} \frac1{p(\tau)}{\int_\tau f}\leq \sup_{\tau \textrm{ periodic}} \frac1{p(\tau)}{\int_\tau f}<\infty.$$
\end{lema}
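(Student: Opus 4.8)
The plan is to run the whole statement through the thermodynamic formalism of the (transitive) Anosov flow $\phi_t$, reducing everything to the location of the unique zero of the pressure function $s\mapsto P(-sf)$. The two ingredients are an elementary fact about Dirichlet series and a Markov coding of $\phi_t$ that links the periodic--orbit count to the pressure; the soft inputs (analyticity of the pressure, the derivative formula, Bowen's formula for the topological entropy as a periodic--orbit growth rate) are the propositions quoted above.

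First I would introduce the Poincar\'e series $\eta(s)=\sum_{\tau\textrm{ periodic}}e^{-s\int_\tau f}$. As soon as the level sets $\{\tau:\int_\tau f\le u\}$ are finite for every $u$ --- which holds whenever $h<\infty$, and also whenever $\eta(s)<\infty$ for a single $s>0$, since $\int_\tau f\ge0$ --- the hypothesis $\int_\tau f\ge0$ makes $\eta$ a Dirichlet series with nonnegative exponents, whose abscissa of convergence is exactly $\limsup_{u\to\infty}\frac1u\log\#\{\tau:\int_\tau f\le u\}=h$ by the Cauchy--Hadamard formula. On the other hand, fixing a Markov coding of $\phi_t$ (Bowen\cite{bowen2}) one writes $\phi_t$, up to a finite--to--one semiconjugacy that does not affect exponential rates, as a suspension of a transitive subshift of finite type $(\E,\sigma)$ over a H\"older roof $r$, with $f$ corresponding to a H\"older $\w f$ on $\E$ and $\int_\tau f=\sum_{j<n}\w f(\sigma^jx)$ along a coded periodic point $\sigma^nx=x$; the suspension relation between the pressures gives, sign by sign, $P(-sf)>0\iff P_\sigma(-s\w f)>0$ and likewise for $<$ and $=$, while $\frac1n\log\sum_{\sigma^nx=x}e^{-s\sum_{j<n}\w f(\sigma^jx)}\to P_\sigma(-s\w f)$ shows that $\eta(s)<\infty$ when $P(-sf)<0$ and $\eta(s)=+\infty$ when $P(-sf)>0$.

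Now $s\mapsto P(-sf)$ is analytic (Proposition \ref{prop:pression}) and non-increasing --- periodic orbit measures being weak-$*$ dense among invariant probabilities, the hypothesis $\int_\tau f\ge0$ gives $\int f\,dm\ge0$ for every invariant $m$, hence $\frac{d}{ds}P(-sf)=-\int f\,dm_{-sf}\le0$ --- and $P(0)=h_{\tope}(\phi_t)>0$. An analytic function cannot vanish on an interval without being identically zero, so, together with monotonicity, $s\mapsto P(-sf)$ is strictly positive on an interval $(0,s^{*})$, vanishes at $s^{*}$, and is strictly negative on $(s^{*},\infty)$ (with $s^{*}=+\infty$ if there is no zero). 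If $h\in(0,\infty)$ then $\eta$ has abscissa $h$, but the previous paragraph shows $\eta(s)=\infty$ for $s<s^{*}$ and $\eta(s)<\infty$ for $s>s^{*}$, so $h=s^{*}$ and $P(-hf)=0$. Conversely, if $P(-s_0f)=0$ with $s_0\in(0,\infty)$ then $s_0=s^{*}$; since $\eta(s)<\infty$ for $s>s_0>0$ the level sets are finite, so the Dirichlet computation applies and $h=$ abscissa of $\eta=s_0$. Finally, assume $P(-hf)=0$. The bound $\sup_\tau\frac1{p(\tau)}\int_\tau f\le\max_X|f|<\infty$ is immediate from compactness. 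If $\inf_\tau\frac1{p(\tau)}\int_\tau f=0$, take periodic orbits $\tau_k$ with $\frac1{p(\tau_k)}\int_{\tau_k}f\to0$ (their periods unbounded along a subsequence, since only finitely many orbits have bounded period) and a weak-$*$ limit $m$ of the measures $\delta_{\tau_k}$; then $m$ is $\phi_t$-invariant with $\int f\,dm=0$, whence $P(-sf)\ge h(\phi_t,m)-s\int f\,dm=h(\phi_t,m)\ge0$ for every $s$, contradicting $P(-sf)<0$ for $s>h$. Hence $\inf_\tau\frac1{p(\tau)}\int_\tau f>0$.

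The main obstacle is the second paragraph: making precise the equivalence between convergence of the Poincar\'e series $\eta$ and the sign of $P(-sf)$. This is where the Markov coding of the Anosov flow is genuinely needed --- its existence, the matching of periodic orbits of the flow with those of the subshift up to an exponentially negligible discrepancy, and the comparison of the suspension pressure with the base pressure. (In the special case $f>0$ one can bypass the coding: reparametrize $\phi_t$ by $f$, so that $\int_\tau f$ is the new period of $\tau$, and combine Proposition \ref{prop:bowenentropia} and Lemma \ref{lema:entropia2} with Abramov's formula \eqref{eq:abramov} to obtain $P\bigl(-h_{\tope}(\psi^f)f\bigr)=0$ with $h_{\tope}(\psi^f)=h$; but for a general $f$ with only $\int_\tau f\ge0$ the symbolic argument seems unavoidable.) Everything else is either elementary or a direct application of the cited results.
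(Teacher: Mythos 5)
The paper does not prove this lemma at all: it is quoted as an external result (Ledrappier, page 106), so there is no internal proof to compare yours against; what follows is an assessment of your argument on its own terms. Your reduction is correct and is essentially the standard (and, in spirit, Ledrappier's) route: the hypothesis $\int_\tau f\ge0$ plus density of convex combinations of periodic orbit measures (Anosov closing lemma, as used in Lemma \ref{lema:genera}) makes $s\mapsto P(-sf)$ non-increasing, analyticity (Proposition \ref{prop:pression}) and $P(0)=h_{\tope}(\phi_t)>0$ force at most one zero $s^{*}$ with strict signs on either side, and the Cahen-type formula for the abscissa of a Dirichlet series with nonnegative exponents identifies $h$ with the abscissa of $\eta(s)=\sum_\tau e^{-s\int_\tau f}$ once level sets are finite; your treatment of the final assertion via a weak-$*$ limit of normalized orbital measures is also fine (and the aside about unbounded periods is unnecessary: if only finitely many orbits occur, the infimum $0$ is attained and that orbit's measure already gives $\int f\,dm=0$).

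The one step you leave as a black box --- sign of $P(-sf)$ versus convergence of $\eta$ --- is indeed where the work is, and the way you phrase it (``matching of periodic orbits of the flow with those of the subshift up to an exponentially negligible discrepancy'') is the heavy Parry--Pollicott/Manning--Bowen inclusion--exclusion; be aware that the naive matching is not period-preserving: an orbit meeting the boundary of the Markov partition may only lift to symbolic closed orbits whose period is a proper multiple, and symbolic orbits may project with period division, so ``same period, same integral, up to small error'' is a theorem, not an observation. Two remarks let you close this more cheaply. For the direction $P(-sf)<0\Rightarrow\eta(s)<\infty$ no coding is needed: picking one point on each closed orbit of period in $(T-1,T]$ gives a $(T,\epsilon)$-separated set for $\epsilon$ below the expansivity constant, so $\sum_{p(\tau)\in(T-1,T]}e^{-s\int_\tau f}\le e^{T(P(-sf)+o(1))}$ up to a factor $\|f\|_\infty$-bounded correction, and summing over integer $T$ gives convergence. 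For the direction $P(-sf)>0\Rightarrow\eta(s)=\infty$ the coding (or specification) is genuinely needed, but the period wrinkle is harmless precisely because of your standing hypotheses: Bowen's semiconjugacy is boundedly finite-to-one, so at most $C$ symbolic closed orbits lie over a given flow orbit $\gamma$, and each has $\w f$-sum equal to an integer multiple of $\int_\gamma f\ge0$, hence weight at most $e^{-s\int_\gamma f}$; therefore the symbolic orbit series is bounded by $C\,\eta(s)$, and its divergence (from $\limsup_n\frac1n\log\sum_{\sigma^nx=x}e^{-s\w f_n(x)}=P_\sigma(-s\w f)>0$, after discarding the $n$-fold overcounting by a factor $n$) forces $\eta(s)=\infty$. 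With those two observations spelled out, your argument is complete modulo only the existence and basic properties of the Markov coding \cite{bowen2}.
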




\subsubsection*{H\"older cocycles on $\bord\G$}

Denote $\G$ for a discrete co-compact torsion free isometry group, of a negatively curved complete simply connected manifold $\w M.$ $\G$ is then a hyperbolic group and its boundary $\bord\G$ is naturally identified with $\w M$'s visual boundary.

We will now focus on H\"older cocycles on $\bord\G.$

\begin{defi}\label{defi:cociclo}A \emph{H\"older cocycle} is a function
$c:\G\times\bord\G\to\R$ such that $$c(\g_0\g_1,x)=c(\g_0,\g_1x)+c(\g_1,x)$$ for
any $\g_0,\g_1\in\G$ and $x\in\bord\G,$ and where $c(\g,\cdot)$ is a H\"older
map for every $\g\in\G$ (the same exponent is assumed for every $\g\in\G$). 
\end{defi}

Given a H\"older cocycle $c$ we define the \emph{periods} of $c$ as the numbers
$$\l_c(\g):=c(\g,\g_+)$$ where $\g_+$ is the attractive fixed point of $\g$ in
$\G-\{e\}.$ The cocycle property implies that the period of an element $\g$ only
depends on its conjugacy class $[\g]\in[\G].$


The main result we shall use on H\"older cocycles is the following theorem of Ledrappier\cite{ledrappier} which relates them to H\"older potentials on $T^1\w M.$

\begin{teo}[Ledrappier\cite{ledrappier}, page 105]\label{teo:ledrappier} For each H\"older cocycle $c$ there exists a H\"older continuous $\G$-invariant function $F_c:T^1\w M\to\R$ such that for every $\g\in\G$ one has $$\l_c(\g)=\int_{[\g]} F_c,$$ where $[\g]$ denotes the periodic orbit of the geodesic flow associated to $\g.$
\end{teo}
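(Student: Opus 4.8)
The plan is to reconstruct Ledrappier's argument: realize $T^1\w M$ as a model built from boundary endpoints together with a Busemann time coordinate, observe that a H\"older cocycle on $\bord\G$ is precisely the coboundary data, in the flow direction, of a potential on this model, and then produce that potential by an equivariant partition of unity.

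\emph{Setting up coordinates.} As $\G$ is discrete, cocompact and torsion free in $\mathrm{Isom}(\w M)$ with $\w M$ negatively curved, $\G$ is a hyperbolic group whose boundary is $\G$-equivariantly the visual boundary of $\w M$, every $\g\neq e$ is axial, and $\G$ acts freely, properly discontinuously and cocompactly on $T^1\w M$. Fix $o\in\w M$; the endpoint maps $v\mapsto v_\pm$ together with the Busemann time $\tau(v)=\beta_{v_+}(o,\pi v)$ give a $\G$-equivariant H\"older homeomorphism of $T^1\w M$ with $\{(x,y)\in\bord\G\times\bord\G:x\neq y\}\times\R$, conjugating the geodesic flow to $(x,y,s)\mapsto(x,y,s+t)$ and the $\G$-action to $\g\cdot(x,y,s)=(\g x,\g y,s+B(\g,y))$, where $B(\g,y)=\beta_y(\g^{-1}o,o)$ is the Busemann cocycle. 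Then the periodic orbit $[\g]$ lifts to the line $\{(\g_-,\g_+,s):s\in\R\}$, on which $\g$ acts by $s\mapsto s+B(\g,\g_+)$, and a short computation with Busemann functions identifies $B(\g,\g_+)$ with the length $\ell(\g)$ of the closed geodesic $[\g]$.

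\emph{Reduction and construction.} It then suffices to build a function $\Phi$ on $T^1\w M$ which is H\"older and $C^1$ in the flow direction and satisfies the twisted equivariance $\Phi(\g v)=\Phi(v)+c(\g,v_+)$: then $F_c:=\partial_t\Phi$ is $\G$-invariant (differentiate the twist along the flow, which does not move $v_+$, so $\partial_t c(\g,v_+)=0$), and integrating along $[\g]$ telescopes, $\int_{[\g]}F_c=\Phi(\g\cdot(\g_-,\g_+,0))-\Phi(\g_-,\g_+,0)=c(\g,\g_+)=\l_c(\g)$. To produce $\Phi$, fix a relatively compact open $D\subset T^1\w M$ with $\G D=T^1\w M$ and a smooth $\chi\geq0$ that is positive on $D$ and has relatively compact support; by proper discontinuity $\Xi:=\sum_{\g\in\G}\chi\circ\g^{-1}$ is locally finite, smooth, positive and $\G$-invariant, so $\rho_\g:=(\chi\circ\g^{-1})/\Xi$ is a smooth $\G$-translated partition of unity with $\rho_\g(\eta v)=\rho_{\eta^{-1}\g}(v)$. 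Put
$$\Phi(v):=-\sum_{\g\in\G}\rho_\g(v)\,c(\g^{-1},v_+);$$
reindexing the sum under $\eta$ and using the cocycle relation together with its consequences $c(e,\cdot)=0$ and $c(\g,\g^{-1}x)=-c(\g^{-1},x)$ gives $\Phi(\eta v)=\Phi(v)+c(\eta,v_+)$. Because the sum is locally finite, each $\rho_\g$ smooth, each $v\mapsto c(\g^{-1},v_+)$ H\"older with the common exponent of $c$, and $v\mapsto v_+$ constant along orbits, $F_c=-\sum_\g(\partial_t\rho_\g)\,c(\g^{-1},v_+)$ is $\G$-invariant and, being locally a finite sum of smooth functions times uniformly H\"older ones, descends to a H\"older function on the compact quotient $\G\/T^1\w M$; by the reduction it has the required periods.

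\emph{Main obstacle.} The one genuinely delicate point is the H\"older regularity claimed in the first step --- that the endpoint-and-Busemann coordinates furnish a H\"older model of $T^1\w M$ conjugating the flow to a translation, equivalently the H\"older regularity of the identification $\bord\G\cong\bord\w M$ and of the Busemann functions in pinched negative curvature. This is the geometric core, it is standard, and it is exactly the input behind Ledrappier's statement, so I would invoke it rather than reprove it; granting it, the remainder is the bookkeeping of an equivariant partition of unity. (Without this coordinate picture one meets instead the classical Liv\v sic-type problem of realizing a prescribed period function by a H\"older potential; it is solved here precisely because the periods come from a cocycle, and the formula above is a concrete way to use that.)
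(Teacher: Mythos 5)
Your proposal is correct and follows essentially the same route as the paper's (i.e.\ Ledrappier's) argument: both build a H\"older function with the twisted equivariance $\Phi(\g v)=\Phi(v)+c(\g,v_+)$ by equivariantly averaging the values $c(\g^{-1},v_+)$ over the orbit of a bump function, then set $F_c=\partial_t\Phi$ and telescope along the closed orbit. The only difference is the averaging gadget --- you use a partition of unity $\sum_\g\rho_\g(v)\,c(\g^{-1},v_+)$ where the paper takes $-\partial_t\log\sum_\g f(d(p,\g o))e^{-c(\g^{-1},x)}$ --- and your version even has the mild advantage of depending linearly on $c$, which would serve just as well for Corollary \ref{cor:continuidadF}.
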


Recall we have denoted $\int_{[\g]} F_c$ for the integral of $F_c$ along the periodic orbit associated to $\g.$

One can find an explicit formula for such $F_c$ as follows (Ledrappier\cite{ledrappier} page 105): Fix some point $o\in \w M$ and consider a $\clase^\infty$ function $f:\R\to\R$ with compact support such that $f(0)=1, f'(0)=f''(0)=0$ and $f(t)>1/2$ if $|t|\leq 2\sup\{d(p,\G\cdot o):p\in \w M\}.$

We can assume that $t\mapsto f(d(\phi_t(p,x)_b,q))$ is differentiable on $t$ for every $p,q\in \w M,$ where $\phi_t(p,x)_b\in  \w M$ is the base point of $\phi_t(p,x)\in T^1 \w M.$

Set $A:\w M\times\bord\G\to\R$ to be \begin{equation}\label{eq:formula1}A(p,x)=\sum_{\g\in\G}f(d(p,\g o))e^{-c(\g^{-1},x)},\end{equation} then the function $F_c:\w M\times \bord\G \to\R$ \begin{equation}\label{eq:formula} F_c(p,x)=-\left.\frac{d}{dt}\right|_{t=0} \log A(\phi_t(p,x)_b,x)\end{equation} is $\G$-invariant and verifies $\int_\g F_c=c(\g,\g_+).$

From the explicit formula for $F_c$ one can deduce some regularity properties. 

Denote $\holder^\a(X)$ for the set of H\"older continuous real valued functions $f:X\to\R$ with exponent $\a$ where $X$ is some compact metric space. For $f\in\holder^\alpha(X)$ denote $\|f\|_\infty :=\max |f|$ and $$K_f=\sup \frac{|f(p)-f(q)|}{d(p,q)^\alpha},$$ one then defines the norm $\|f\|_\alpha$ as $\|f\|_\alpha:=\|f\|_\infty+K_f.$ The vector space $(\holder^\alpha(X),\|\ \|_\alpha)$ is a Banach space.

If $c$ is a H\"older cocycle with exponent $\alpha$ and $\g\in\G$ define $\|c(\g,\cdot)\|_\alpha$ as its H\"older norm on $\holder^\a(\bord\G).$

Fix a finite generator $\cal A$ of $\G$ and define the distance between two H\"older cocycles with same H\"older exponent $c$ and $c'$ as $$d(c,c'):= \sup\{\|c(\g,\cdot)-c'(\g,\cdot)\|_\alpha:\g\in\cal A\}.$$ Denote $\cal C^\alpha$ the vector space of H\"older cocycles with exponent $\alpha.$ It is clear that the topology of $\cal C^\alpha$ does not depend on the (finite) generator of $\G.$

\begin{cor}\label{cor:continuidadF} The function $:\cal C^\alpha\to\holder^\a(T^1\w M)$ $$c\mapsto F_c$$ given by formula (\ref{eq:formula}) is analytic.
\end{cor}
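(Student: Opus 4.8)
The plan is to prove analyticity of $c\mapsto F_c$ by working directly with the explicit formula (\ref{eq:formula}). First I would observe that the map $c\mapsto c(\g^{-1},\cdot)$ from $\cal C^\alpha$ to $\holder^\alpha(\bord\G)$ is linear and bounded for each fixed $\g$ (indeed this is essentially the definition of the topology on $\cal C^\alpha$, once one controls the H\"older norms of $c(\g^{-1},\cdot)$ for all $\g$ by those of the generators via the cocycle relation). Then $c\mapsto e^{-c(\g^{-1},x)}$ is analytic: exponentiation is an entire function and composition of an analytic map into a Banach algebra with the exponential series converges locally uniformly, so each summand $\g\mapsto f(d(p,\g o))e^{-c(\g^{-1},x)}$ in (\ref{eq:formula1}) is an analytic function of $c$ with values in an appropriate Banach space of functions of $(p,x)$.

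The next step is to handle the infinite sum defining $A(p,x)$ in (\ref{eq:formula1}). The point is that $f$ has compact support, so for fixed $p$ only finitely many $\g$ contribute — the number of such $\g$ is bounded uniformly in $p$ by cocompactness and discreteness of $\G$ — hence the sum is locally finite with a uniform bound on the number of nonzero terms, and each term is uniformly bounded in the relevant norm on a neighborhood of a given cocycle $c_0$ (using that $\|c(\g^{-1},\cdot)\|_\alpha$ grows at most linearly in the word length of $\g$, so the factor $e^{-c(\g^{-1},x)}$ is controlled on the finite set of $\g$ with $f(d(p,\g o))\neq 0$). Therefore $c\mapsto A$ is a locally uniform limit (in fact locally a finite sum) of analytic maps, hence analytic, with values in a Banach space of functions that are H\"older in $x$ and, say, $\clase^1$ along the flow direction in $p$. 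One must also check $A$ is bounded below away from zero: since $f>1/2$ on the relevant range and $A\geq f(d(p,\g o))e^{-c(\g^{-1},x)}$ for a suitable $\g$, this holds uniformly near $c_0$.

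Then $c\mapsto \log A$ is analytic, because $\log$ is analytic on the set $\{u : u\geq\varepsilon\}$ and $A$ stays in such a set locally uniformly; and finally $c\mapsto F_c = -\frac{d}{dt}\big|_{t=0}\log A(\phi_t(p,x)_b,x)$ is analytic because differentiation in the flow direction is a bounded linear operation from the space of functions that are $\clase^1$ along the flow (with the appropriate norm) to $\holder^\alpha(T^1\w M)$ — this is where the hypotheses $f(0)=1$, $f'(0)=f''(0)=0$ and the differentiability assumption on $t\mapsto f(d(\phi_t(p,x)_b,q))$ enter, guaranteeing that the $t$-derivative lands in $\holder^\alpha$ with controlled norm. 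Composition of analytic maps being analytic, we conclude.

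The main obstacle I expect is bookkeeping the right scale of Banach spaces: one needs a Banach space of functions on $\w M\times\bord\G$ (or on $T^1\w M$) in which (i) the exponential and logarithm operate analytically, (ii) the infinite sum in (\ref{eq:formula1}) converges locally uniformly or is locally finite with uniform bounds, and (iii) differentiation along the flow is a bounded operator into $\holder^\alpha(T^1\w M)$. Reconciling the linear growth of $\|c(\g,\cdot)\|_\alpha$ in word length with the compact support of $f$ to get genuine local uniformity of the analytic estimates, rather than just pointwise-in-$p$ statements, is the delicate part; once the correct functional-analytic framework is fixed, each individual step is a routine application of the stability of analyticity under composition, locally uniform limits, and bounded linear maps.
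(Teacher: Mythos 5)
Your overall route is the paper's: work directly from formulas (\ref{eq:formula1}) and (\ref{eq:formula}), use the compact support of $f$ to make the sum defining $A$ effectively finite, observe that $A$ is bounded below (since $f>1/2$ on the relevant range of distances), and then compose with $\exp$, $\log$ and the derivative along the flow, each of which preserves analyticity. The one genuine soft spot is precisely the point you yourself call ``the delicate part'': you try to obtain uniformity in $p$ over all of $\w M$ by invoking the linear growth of $\|c(\g^{-1},\cdot)\|_\alpha$ in word length. As stated this cannot work: linear growth of the cocycle norm means $e^{-c(\g^{-1},x)}$ may be of size $e^{C|\g|}$, and for $p$ far from the orbit $\G\cdot o$ the elements $\g$ with $f(d(p,\g o))\neq0$ have word length comparable to $d(o,p)$, so the individual summands (and $A$ itself) are not uniformly bounded over $p\in\w M$. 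Note also that $A$ and $\log A$ are \emph{not} $\G$-invariant (only $F_c$ is, thanks to the cocycle relation killing the extra term after the $t$-derivative), so you cannot simply regard them as functions on the compact quotient and get uniformity that way.

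The missing idea, which is the paper's opening move, is to give up on global-in-$p$ statements altogether: let $W$ be a small neighborhood of a compact fundamental domain for $\G$ acting on $\w M$. For $p\in W$ the only $\g$ contributing to (\ref{eq:formula1}) are those with $\g o$ in an $R$-neighborhood of $W$, where $R$ bounds the support of $f$ --- a single finite set, independent of $p\in W$ and of $c$. Hence $c\mapsto \log A|_{W\times\bord\G}$ is a finite sum/composition of bounded linear maps with $\exp$ and $\log$ (the lower bound on $A$ holding uniformly on $W\times\bord\G$ and locally in $c$), so it is analytic, and the explicit formula for the $t$-derivative gives analyticity of $c\mapsto F_c|_{W\times\bord\G}$ with values in $\holder^\a(W\times\bord\G)$; since $F_c$ is $\G$-invariant and $W$ contains a fundamental domain, this already determines $F_c$ as an element of $\holder^\a(T^1\w M)$, which yields the corollary. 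With this restriction no word-length estimates are needed at all; the rest of your steps (boundedness of $c\mapsto c(\g^{-1},\cdot)$ via the cocycle relation, analyticity of $\exp$ and of $\log$ away from $0$, boundedness of differentiation along the flow into $\holder^\a$) are exactly the ingredients of the paper's proof.
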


\begin{proof} Consider a compact fundamental domain of $\G$ acting on $\w M$ and let $W$ be  a small neighborhood of this compact set. The set $\cal A=\{\g\in\G: \g W\cap W\neq \vacio\}$ is finite and a generator of $\G.$

It is then clear that the function given by formula (\ref{eq:formula1}) $c\mapsto \log A(\cdot,\cdot)|W\times\bord\G$ is analytic since only the elements in $\g\in\cal A$ verify $\g W\cap W\neq\vacio$ and $\cal A$ is finite. An explicit formula for the derivative $$t\mapsto -\left.\frac{d}{dt}\right|_{t=0} \log A(\phi_t(p,x)_b,x)$$ shows analyticity of $c\mapsto F_c|W\times\bord\G.$ Since $F_c$ is $\G$-invariant and $W$ contains a fundamental domain we obtain that $c\mapsto F_c$ is analytic.
\end{proof}

Liv\v sic\cite{livsic}'s theorem implies that the set of $\G$-invariant H\"older functions $F:T^1\w M\to\R$ cohomologous to zero is a closed subspace of $\holder^\a(T^1\w M),$ one obtains thus the following:

\begin{cor}The function $\cal C^\a\to\holder^\a(T^1\w M)/\{\textrm{Liv\v sic cohomology}\}$ $$c\mapsto \textrm{the cohomology class of }F_c,$$ is analytic. 
\end{cor}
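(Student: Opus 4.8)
The statement follows by combining the previous corollary (analyticity of $c\mapsto F_c$ from $\cal C^\alpha$ into $\holder^\alpha(T^1\w M)$) with the Liv\v sic rigidity fact just quoted. The composition of an analytic map with the quotient projection onto a Banach space modulo a \emph{closed} subspace is analytic, so the only thing to check is that the quotient $\holder^\alpha(T^1\w M)/\{\textrm{Liv\v sic cohomology}\}$ is a genuine Banach space, i.e.\ that the subspace of $\G$-invariant H\"older functions that are Liv\v sic cohomologous to zero is closed in $\holder^\alpha(T^1\w M)$.

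First I would make the identification explicit: by Liv\v sic's theorem a H\"older potential $g:T^1\w M\to\R$ is cohomologous to zero (through a continuous transfer function, $\clase^1$ in the flow direction) if and only if $\int_\tau g=0$ for every periodic orbit $\tau$ of the geodesic flow. Equivalently, writing $N = \{g\in\holder^\alpha(T^1\w M): \int_\tau g = 0 \textrm{ for all periodic }\tau\}$, we have $N=\bigcap_\tau \ker\ell_\tau$ where $\ell_\tau(g):=\int_\tau g$. Each functional $\ell_\tau$ is continuous on $\holder^\alpha(T^1\w M)$ (indeed $|\int_\tau g|\le p(\tau)\|g\|_\infty\le p(\tau)\|g\|_\alpha$), so $N$ is an intersection of closed hyperplanes, hence closed. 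Therefore the quotient map $\pi:\holder^\alpha(T^1\w M)\to \holder^\alpha(T^1\w M)/N$ is a bounded linear surjection onto a Banach space.

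Next, the previous corollary gives that $c\mapsto F_c$ is analytic as a map $\cal C^\alpha\to\holder^\alpha(T^1\w M)$. Composing with the bounded linear (hence analytic) projection $\pi$, the map $c\mapsto \pi(F_c)$, which is exactly the map sending $c$ to the Liv\v sic cohomology class of $F_c$, is analytic. This completes the argument.

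The only subtlety, and the step I would be most careful about, is making sure the subspace one quotients by is the right one and that it is closed for the chosen topology: one must restrict attention to \emph{$\G$-invariant} H\"older functions on $T^1\w M$ (equivalently H\"older functions on $\G\/T^1\w M$) so that periods are well defined, and one must use that Liv\v sic's theorem produces a continuous transfer function, so that ``cohomologous to zero'' really is characterized by vanishing of all periods and hence cut out by the closed functionals $\ell_\tau$. Once that is pinned down the rest is the soft functional-analytic observation that post-composing an analytic Banach-space-valued map with a bounded linear quotient projection stays analytic.
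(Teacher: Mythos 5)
Your proposal is correct and follows essentially the same route as the paper: the paper's entire argument is the remark that Liv\v sic's theorem makes the subspace of cohomologically trivial potentials closed in $\holder^\a(T^1\w M)$, combined with the analyticity of $c\mapsto F_c$ from the preceding corollary. Your explicit identification of that subspace as $\bigcap_\tau\ker\ell_\tau$ just fills in the detail the paper leaves implicit.
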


We will always assume that the periods of a H\"older cocycle $c$ are positive, i.e. $\l_c(\g)>0$ for every $\g.$ For such a cocycle one defines the exponential growth rate as
$$h_c:=\limsup_{s\to\infty}\frac{\log\#\{[\g]\in[\G]:\l_c(\g)\leq s\}}s\in(0,\infty]$$ (it is consequence of Ledrappier's theorem that $h_c$ is always positive).

\begin{lema}[\S2 of A.S.\cite{quantitative}]\label{lema:funcionpositiva} Let $c:\G\times\bord\G\to\R$ be a H\"older cocycle with finite exponential growth rate, then the function $F_c$ is cohomologous to a positive function and is not cohomologous to a constant.
\end{lema}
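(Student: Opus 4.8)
The plan is to establish the two assertions separately: first that $F_c$ is Liv\v sic cohomologous to a strictly positive H\"older function, and second that $F_c$ is not cohomologous to a constant, both using Ledrappier's theorem (Theorem \ref{teo:ledrappier}) together with the characterization of periodic-orbit growth in Lemma \ref{lema:lemaledrappier}.

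\textbf{Step 1: cohomology to a positive function.} By Ledrappier's theorem the periods of $F_c$ along periodic orbits are exactly the periods $\l_c(\g)=\int_{[\g]}F_c>0$, which are positive by our standing assumption. Since $h_c$ is finite and positive, Lemma \ref{lema:lemaledrappier} (applied with $f=F_c$, whose orbital integrals are nonnegative) gives $P(\phi_t,-h_cF_c)=0$ and, crucially, that $$0<\inf_{\tau}\frac1{p(\tau)}\int_\tau F_c\leq\sup_\tau\frac1{p(\tau)}\int_\tau F_c<\infty.$$ In other words the ``average value'' of $F_c$ along every periodic orbit is bounded away from $0$ and from $\infty$. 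I would then invoke the Liv\v sic-type result used elsewhere in A.S.\cite{quantitative} (and classical for Anosov flows): a H\"older potential whose orbital averages are all positive is Liv\v sic cohomologous to a strictly positive H\"older potential. Concretely, choosing $c_0$ strictly between $0$ and $\inf_\tau p(\tau)^{-1}\int_\tau F_c$, the potential $F_c-c_0$ has strictly positive orbital integrals over all periodic orbits, hence by the Liv\v sic closing lemma and density of periodic orbits one produces a H\"older transfer function $U$, $\clase^1$ in the flow direction, with $F_c-\partial_t|_{t=0}U(\phi_t\cdot)$ bounded below by a positive constant (up to shrinking $c_0$); adding $c_0$ back shows $F_c$ itself is cohomologous to a function $\geq c_0>0$.

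\textbf{Step 2: not cohomologous to a constant.} Suppose for contradiction $F_c$ were cohomologous to a constant $\lambda$. Then $\int_\tau F_c=\lambda\, p(\tau)$ for every periodic orbit, and positivity of periods forces $\lambda>0$. But then $$\#\{[\g]\in[\G]:\l_c(\g)\leq s\}=\#\{\tau:\lambda\,p(\tau)\leq s\}=\#\{\tau:p(\tau)\leq s/\lambda\},$$ so $h_c=h_{\tope}(\phi_t)/\lambda$. This is not yet a contradiction by itself, so the argument must use more: namely that the geodesic flow of a negatively curved manifold with non-constant curvature (or more robustly, the comparison with the actual cocycle structure) cannot have $F_c$ cohomologous to a constant because the periods $\l_c(\g)$ do not grow proportionally to the word length / geodesic length. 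The cleanest route is via Proposition \ref{prop:pression}: if $F_c$ were cohomologous to a constant then so would be $F_c$ minus its average against the measure of maximal entropy, hence $t\mapsto P(\phi_t,-tF_c)$ would be affine; but $P(\phi_t,-h_cF_c)=0$ and $P(\phi_t,0)=h_{\tope}(\phi_t)>0$ pin down the line, and one checks this contradicts the normalization coming from Ledrappier's explicit formula (\ref{eq:formula}), or simply the fact that the reparametrized flow $\psi_t$ by $F_c$ would then be a constant time-change of $\phi_t$, i.e. conjugate to $\phi_t$ itself, whereas the periods $\l_c(\g)$ are genuinely different spectral data. I expect the precise form of this last contradiction — isolating exactly which property of H\"older cocycles rules out proportional periods — to be the main obstacle; the likely resolution is that it is built into the hypotheses via the quantitative bounds of Lemma \ref{lema:lemaledrappier} plus strict convexity of the pressure function in Proposition \ref{prop:pression}, since if the orbital averages $p(\tau)^{-1}\int_\tau F_c$ are not all equal then $F_c-\lambda$ is not cohomologically trivial and strict convexity of $t\mapsto P(\phi_t,-tF_c)$ follows, which is incompatible with affineness.

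\textbf{Assembly.} Combining Steps 1 and 2 gives exactly the statement: $F_c$ is cohomologous to a positive function (Step 1) and is not cohomologous to a constant (Step 2). I would present Step 1 as the substantive point and remark that Step 2 is essentially the observation that the cocycle's periods are not homothetic to geodesic periods, made rigorous through Proposition \ref{prop:pression}.
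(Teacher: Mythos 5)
Your Step~1 is sound and is essentially the argument the paper implicitly relies on (note that the paper gives no proof here: the lemma is quoted from \S2 of A.S.\cite{quantitative}). From positivity of the periods and $h_c\in(0,\infty)$, Ledrappier's Lemma~\ref{lema:lemaledrappier} gives $P(-h_cF_c)=0$ together with the uniform bound $\inf_\tau p(\tau)^{-1}\int_\tau F_c=\eps>0$, and one then applies a positive Liv\v sic (sub-action) theorem to $F_c-\eps'$ for some $0<\eps'<\eps$. Two caveats: positivity of all the orbital integrals alone would not suffice (if the infimum of the normalized periods were $0$, the function could not be cohomologous to a positive one), so the uniform bound extracted from Lemma~\ref{lema:lemaledrappier} is really the point; and the passage from non-negative periodic data to a transfer function that is $\clase^1$ along the flow is a theorem in its own right (Lopes--Thieullen's sub-actions for Anosov flows), which should be quoted as such rather than compressed into ``by the Liv\v sic closing lemma and density of periodic orbits''.

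Step~2 contains the genuine gap, and it is not one you can close: the second assertion does not follow from the stated hypotheses. Take for $c$ the Busemann cocycle of $\w M$ itself, so that $\l_c(\g)$ is the translation length of $\g$: its periods are positive, $h_c=h_{\tope}(\phi_t)$ is finite, and $F_c$ has the same integrals over periodic orbits as the constant function $1$, hence is cohomologous to a constant by Liv\v sic's theorem. This is why your attempted contradictions keep turning circular --- your final reduction assumes that the normalized periods $p(\tau)^{-1}\int_\tau F_c$ are not all equal, which by Liv\v sic is exactly the statement ``$F_c$ is not cohomologous to a constant''. Non-constancy therefore needs input beyond finiteness of $h_c$: in this paper it is only ever invoked for cocycles coming from Zariski dense representations, and where such non-triviality actually matters the paper obtains it from Benoist's density theorem~\ref{teo:densos} (see the proof of Proposition~\ref{prop:algo}, where ``$\varphi(F_\rho)$ not cohomologous to zero'' is deduced from density of the group generated by the Jordan projections), not from Lemma~\ref{lema:funcionpositiva}. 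The correct write-up is thus: prove the first clause as in your Step~1, and either add a hypothesis forcing the normalized periods to be non-constant or establish that property separately in the representation-theoretic setting of \cite{quantitative}.
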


Denote $\holder_+^\a(T^1\w M)$ for the subset of $\holder^\a(T^1\w M)$ of functions cohomologous to a positive function.

\begin{lema}\label{lema:entropia} The function $h:\holder^\a_+(T^1\w M)\to\R$ given as a solution to the equation $$P(-h(F)F)=0$$ is analytic. Moreover, the function $F\mapsto \textrm{equilibrium state of $-h(F)F$}$ is also analytic.
\end{lema}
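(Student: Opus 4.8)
The plan is to obtain both $h$ and the equilibrium-state map from the analytic implicit function theorem applied to the pressure functional. The one external input, beyond the implicit function theorem, is the thermodynamic formalism of transitive Anosov flows: strengthening the one-parameter statement of Proposition \ref{prop:pression}, Ruelle's theory gives that the pressure $P\colon\holder^\a(T^1\w M)\to\R$ is a real-analytic functional on the Banach space of H\"older potentials, and that the equilibrium-state map $f\mapsto m_f$ is analytic, in the sense that $f\mapsto\int g\,dm_f$ is real-analytic for every fixed $g$ (indeed $\int g\,dm_f=-\frac{\partial}{\partial t}P(f-tg)|_{t=0}$, which exhibits this derivative as analytic once $P$ is). Everything that follows is formal.

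First I would record that $\holder^\a_+(T^1\w M)$ is open in $\holder^\a(T^1\w M)$: if $F_0$ is Liv\v sic cohomologous to a positive function $F_0'$ and $\|F-F_0\|_\a<\min F_0'$, then, since $F_0-F_0'$ is a coboundary, $F$ is cohomologous to $F_0'+(F-F_0)$, which is positive because the H\"older norm dominates the sup norm. Next, consider $\Phi\colon\holder^\a_+(T^1\w M)\times\R\to\R$ defined by $\Phi(F,s)=P(-sF)$. The map $(F,s)\mapsto -sF$ is a continuous bilinear map from the Banach space $\holder^\a(T^1\w M)\times\R$ into $\holder^\a(T^1\w M)$, hence analytic; composing with $P$ shows that $\Phi$ is analytic.

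Then I would verify the hypotheses of the implicit function theorem at an arbitrary $F\in\holder^\a_+(T^1\w M)$. By Proposition \ref{prop:pression}, $\frac{\partial}{\partial s}\Phi(F,s)=-\int F\,dm_{-sF}$. Since $F$ is cohomologous to a positive $F'$, every $\phi_t$-invariant probability $m$ satisfies $\int F\,dm=\int F'\,dm\geq\min F'>0$; hence $\frac{\partial}{\partial s}\Phi(F,s)\leq-\min F'<0$ for every $s$. As $\Phi(F,0)=h_{\tope}(\phi_t)>0$, this uniform slope bound forces a unique zero $h(F)\in(0,\infty)$, and there $\frac{\partial}{\partial s}\Phi(F,h(F))\neq0$. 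The analytic implicit function theorem then produces, near each $F_0$, a unique analytic local solution of $\Phi(F,h(F))=0$; by uniqueness of the zero this local solution coincides with the global function $h$, which is therefore analytic on $\holder^\a_+(T^1\w M)$.

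Finally, $F\mapsto -h(F)F$ is analytic (product of the analytic scalar function $h$ with the identity of $\holder^\a(T^1\w M)$), and every H\"older potential has a unique equilibrium state for a transitive Anosov flow (Proposition \ref{teo:ruellebowen}); composing $F\mapsto-h(F)F$ with the analytic equilibrium-state map $f\mapsto m_f$ shows that $F\mapsto m_{-h(F)F}$ is analytic, which is the second assertion. The point requiring the most care is the joint analyticity of $P$ on the infinite-dimensional space $\holder^\a(T^1\w M)$; if one wishes to invoke only Proposition \ref{prop:pression}, one checks analyticity of $h$ along an arbitrary analytic path $F_u$ in $\holder^\a_+(T^1\w M)$ by applying the finite-dimensional implicit function theorem to $(u,s)\mapsto P(-sF_u)$, which needs only analyticity of $P$ along finite-dimensional affine families of potentials — again part of standard Ruelle theory.
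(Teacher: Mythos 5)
Your proof is correct and follows essentially the same route as the paper, which simply invokes the implicit function theorem together with the formula $\frac{\partial}{\partial t}P(f-tg)|_{t=0}=-\int g\,dm_f$ to get a nonvanishing partial derivative. You have merely filled in the details the paper leaves implicit (openness of $\holder^\a_+$, existence and uniqueness of the zero via the uniform negative slope, and the analyticity of $P$ itself on the Banach space of H\"older potentials), all of which check out.
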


\begin{proof} This is direct consequence of the implicit function theorem and of the formula $$\left.\frac{\partial P(f-tg)}{\partial t}\right|_{t=0}=\int gdm_f$$ where $m_f$ is $f$'s equilibrium state.
\end{proof}

Denote $\cal C^\alpha_+$ the subset of H\"older cocycles with positive periods such that $h_c\in(0,\infty).$ We obtain the following proposition:

\begin{prop}\label{cor:continuidadentropia} The exponential growth rate function $h:\cal C^\alpha_+\to\R$ $$c\mapsto h_c$$ is analytic.
\end{prop}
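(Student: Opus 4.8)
The plan is to reduce the analyticity of $c\mapsto h_c$ on $\cal C^\alpha_+$ to the already-established analyticity statements by composing three maps. First I would invoke Corollary~\ref{cor:continuidadF}: the assignment $c\mapsto F_c$ is analytic from $\cal C^\alpha$ into $\holder^\alpha(T^1\w M)$. Next I would check that this map sends $\cal C^\alpha_+$ into $\holder^\alpha_+(T^1\w M)$, the subspace of potentials cohomologous to a positive function. This is precisely the content of Lemma~\ref{lema:funcionpositiva}: a H\"older cocycle $c$ with positive periods and finite exponential growth rate $h_c$ has $F_c$ cohomologous to a positive function (and not cohomologous to a constant). Finally I would apply Lemma~\ref{lema:entropia}: the map $F\mapsto h(F)$, defined implicitly by $P(-h(F)F)=0$, is analytic on $\holder^\alpha_+(T^1\w M)$.

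It then remains to identify $h(F_c)$ with $h_c$, so that $c\mapsto h_c$ is literally the composition of the two analytic maps $c\mapsto F_c$ and $F\mapsto h(F)$. For this I would use Ledrappier's Theorem~\ref{teo:ledrappier}, which gives $\l_c(\g)=\int_{[\g]}F_c$ for every $\g$, so that the periods of $c$ are exactly the integrals of $F_c$ along the corresponding periodic orbits of the geodesic flow. Hence
$$h_c=\limsup_{s\to\infty}\frac{\log\#\{[\g]\in[\G]:\l_c(\g)\leq s\}}s=\limsup_{s\to\infty}\frac{\log\#\{\tau\textrm{ periodic}:\int_\tau F_c\leq s\}}s.$$
Since $F_c$ is cohomologous to a positive function, the integrals $\int_\tau F_c$ are unchanged (Liv\v sic cohomology does not affect periods), and $h_c\in(0,\infty)$ by hypothesis; Ledrappier's Lemma~\ref{lema:lemaledrappier} then yields $P(-h_cF_c)=0$. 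By uniqueness of the solution of $P(-sF_c)=0$ for $s\in(0,\infty)$ — which follows because $t\mapsto P(-tF_c)$ is strictly decreasing on the relevant range, being analytic with derivative $-\int F_c\,dm<0$ (Proposition~\ref{prop:pression}, together with positivity of $F_c$ up to cohomology) — we conclude $h_c=h(F_c)$.

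Putting these together, $h=h(\cdot)\circ F_{(\cdot)}$ on $\cal C^\alpha_+$ is a composition of analytic maps between (open subsets of) Banach spaces, hence analytic, which is the assertion of the proposition. The only genuinely delicate point is checking that $F_c$ actually lands in the open set $\holder^\alpha_+(T^1\w M)$ on which $h(\cdot)$ is analytic, and that $\cal C^\alpha_+$ is open in $\cal C^\alpha$ so that ``analytic'' makes sense; both are handled by Lemma~\ref{lema:funcionpositiva} and the continuity of $c\mapsto F_c$ combined with $\holder^\alpha_+$ being open (a function cohomologous to a function bounded below by $\delta>0$ stays so under small $\holder^\alpha$ perturbations, since cohomology classes form a closed subspace and the positive cone is open in the quotient). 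Everything else is a direct citation of the lemmas collected in this section.
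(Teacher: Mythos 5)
Your proposal is correct and follows the same route as the paper: the paper's proof is exactly the composition of Corollary~\ref{cor:continuidadF}, Lemma~\ref{lema:funcionpositiva} (to land in $\holder^\a_+(T^1\w M)$), and Lemma~\ref{lema:entropia}, with the identification $h_c=h(F_c)$ via Ledrappier's results left implicit. You have merely spelled out that identification and the openness point, which the paper takes for granted.
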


\begin{proof} Consider some $c\in\cal C^\alpha_+.$ Since $h_c$ is finite and positive lemma \ref{lema:funcionpositiva} implies that the function $F_c$ belongs to $\holder_+^\a(T^1\w M).$ One then applies corollary \ref{cor:continuidadF} together with lemma \ref{lema:entropia}.

\end{proof}

\section{Convex representations}


This section is devoted to the study of the limit cone of convex representations. We first work on strictly convex representations, i.e. irreducible morphisms $\rho:\G\to \PGL d$ admitting equivariant mappings to $\P(\R^d)$ and $\P({\R^d}^*)$ with a transversality condition. We then use these representations to study Zariski dense hyperconvex representations.

\subsubsection*{Strictly convex representations}

Recall that $\G$ is the fundamental group of compact negatively curved manifold. Fix some finite dimensional real vector space $V.$

\begin{defi}We shall say that an irreducible representation $\rho:\G\to\pgl(V)$ is \emph{strictly convex} if there exist two H\"older $\rho$-equivariant mappings $\xi:\bord \G \to\P(V)$ and $\eta:\bord\G\to\P(V^*)$ such that for every distinct points $x\neq y$ on $\bord\G$ the line $\xi(x)$ doesn't belong to the kernel of $\eta(y).$
\end{defi}

We say that $g\in\pgl(V)$ is \emph{proximal} if (any lift of $g$ to $\GL(V)$) has a unique complex eigenvalue of maximal modulus, and its generalized eigenspace is one dimensional. This eigenvalue is necessarily real and its modulus is equal to $\exp\lambda_1(g).$ We will denote $g_+$ the $g$-fixed line of $V$ consisting of eigenvectors of this eigenvalue and denote $g_-$ the $g$-invariant complement of $g_+$ (this is $V=g_+\oplus g_-$). $g_+$ is an attractor on $\P(V)$ for the action of $g$ and $g_-$ is a repelling hyperplane.

\begin{lema}[\S5 of A.S.\cite{quantitative}]\label{lema:loxodromic} Let $\rho:\G\to \pgl(V)$ be a strictly convex representation. Then for every $\g\in\G$ $\rho(\g)$ is proximal, $\xi(\g_+)$ is its attracting fixed line and $\ker \eta(\g_-)$ is the repelling hyperplane, where $\xi$ and $\eta$ are the $\rho$-equivariant maps of the definition. 
\end{lema}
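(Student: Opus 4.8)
The plan is to show that $\rho(\g)$ is proximal with the claimed attracting line and repelling hyperplane by using the equivariant maps and the North-South dynamics of $\g$ on $\bord\G$ to force a contraction on projective space.

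\medskip

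\textbf{Step 1: Setting up the dynamics on the boundary.} First I would recall that since $\G$ is the fundamental group of a compact negatively curved manifold, every $\g\in\G-\{e\}$ acts on $\bord\G$ with North-South dynamics: there are exactly two fixed points $\g_+$ (attracting) and $\g_-$ (repelling), and for every $x\in\bord\G-\{\g_-\}$ one has $\g^n x\to\g_+$, uniformly on compact subsets of $\bord\G-\{\g_-\}$. This is the key source of contraction we will transport to $\P(V)$ and $\P(V^*)$ via $\xi$ and $\eta$.

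\medskip

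\textbf{Step 2: Transporting the contraction to $\P(V)$.} Consider the action of $g:=\rho(\g)$ on $\P(V)$. By equivariance, $g\cdot\xi(x)=\xi(\g x)$ and $g\cdot\eta(y)=\eta(\g y)$, so $\xi(\g_+)$ and $\xi(\g_-)$ are $g$-fixed lines, and likewise $\eta(\g_+),\eta(\g_-)$ are $g$-fixed hyperplanes. The transversality hypothesis gives $V=\xi(\g_+)\oplus\ker\eta(\g_-)$: indeed $\xi(\g_+)\not\subset\ker\eta(\g_-)$ because $\g_+\neq\g_-$. This is the candidate decomposition $V=g_+\oplus g_-$. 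Now I would argue that $\xi(\g_+)$ is an attracting fixed point for $g$ acting on $\P(V)$: using the continuity of $\xi$ together with the uniform North-South convergence $\g^n x\to\g_+$ for $x$ ranging over a compact neighborhood of $\g_+$, and the compactness of $\bord\G$, one deduces that $g^n$ restricted to a neighborhood of $\xi(\g_+)$ in $\P(V)$ — more precisely to $\xi$ of a neighborhood of $\g_+$ — converges uniformly to the constant $\xi(\g_+)$. The point is that the image $\xi(\bord\G)$ together with its transversality to the hyperplanes $\ker\eta(y)$ lets us cover a full neighborhood of $\xi(\g_+)$ in $\P(V)$ by flowing points that escape $\ker\eta(\g_-)$; combined with the invariance of $\ker\eta(\g_-)$ this forces $g$ to be proximal with attracting line $\xi(\g_+)$. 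I expect this deduction — that the mere existence of an equivariant transversal pair of maps upgrades the topological North-South dynamics on $\bord\G$ to genuine proximality (a spectral statement) on $\P(V)$ — to be the main obstacle, and the careful argument is presumably the content of the cited \S5 of A.S.\cite{quantitative}. The mechanism: if $g$ were not proximal, the eigenvalues of maximal modulus would span a subspace of dimension $\geq2$ (or have a Jordan block), and then the $g^n$-orbit of a generic line near $\xi(\g_+)$ could not converge to the single line $\xi(\g_+)$, contradicting what the equivariant map forces; one also rules out that the maximal-modulus eigenvalue is complex or has a nontrivial Jordan block by the same convergence.

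\medskip

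\textbf{Step 3: Identifying $g_+$ and $g_-$.} Once $g$ is known to be proximal, its attracting fixed line $g_+$ in $\P(V)$ is the unique attracting fixed point, and $g_-$ the unique repelling $g$-invariant hyperplane. Since $\xi(\g_+)$ has just been shown to be an attracting fixed line for $g$, uniqueness gives $g_+=\xi(\g_+)$. Applying the same argument to the contragredient representation $\rho^*:\G\to\pgl(V^*)$, which is again strictly convex with equivariant maps $\eta$ (to $\P(V^*)$) and $\xi$ (to $\P(V^{**})=\P(V)$) and the same transversality, one gets that $\rho^*(\g)=\,^t\!g^{-1}$ is proximal with attracting line $\eta(\g_+)$ in $\P(V^*)$; dually this says $\ker\eta(\g_-)$ is the repelling hyperplane of $g$, i.e. $g_-=\ker\eta(\g_-)$. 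That $\g_-$ is the attracting point for $\g^{-1}$, matching $\rho(\g^{-1})=g^{-1}$, keeps the indices consistent. This completes the identification and hence the proof.
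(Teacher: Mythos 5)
The paper itself does not prove this lemma---it is quoted from \S5 of \cite{quantitative}---so the comparison is with the standard argument there. Your Step 1 and the first half of Step 2 are correct and are indeed how that argument begins: equivariance makes $\xi(\g_+)$ a $\rho(\g)$-invariant line and $\ker\eta(\g_-)$ a $\rho(\g)$-invariant hyperplane, and transversality gives $V=\xi(\g_+)\oplus\ker\eta(\g_-)$. The gap is exactly where you flag it: the passage from North--South dynamics to proximality. Your mechanism---that $\xi(\bord\G)$ together with transversality lets one ``cover a full neighborhood of $\xi(\g_+)$ in $\P(V)$'', so that $\rho(\g)^n$ of a generic nearby line converges to $\xi(\g_+)$---cannot work: $\xi(\bord\G)$ is a compact set of the topological dimension of $\bord\G$, in general much smaller than $\dim\P(V)$ (for a Hitchin representation it is a circle in $\P(\R^d)$), so it contains no neighborhood of $\xi(\g_+)$, and the convergence $\rho(\g)^n\cdot L\to\xi(\g_+)$ is only known for lines $L$ in the image of $\xi$. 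Moreover your argument never invokes irreducibility, which is part of the definition of strictly convex and is indispensable here: for $V=V_1\oplus V_2$, $\rho=\rho_1\oplus\rho_2$, $\xi$ with values in $\P(V_1)$ and $\ker\eta(y)=\ker\eta_1(y)\oplus V_2$, the transversality condition holds as soon as it holds for $\rho_1$, yet $\rho(\g)$ fails to be proximal whenever $\rho_2(\g)$ has an eigenvalue of modulus at least that of $\rho_1(\g)$'s top eigenvalue.

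The step is closed as follows. Let $g$ be a lift of $\rho(\g)$, let $a$ be its eigenvalue on $\ell=\xi(\g_+)$ and $H=\ker\eta(\g_-)$ the invariant complement. For $x\in\bord\G-\{\g_+,\g_-\}$ write $\xi(x)=\R(u+w)$ with $u\in\ell$, $w\in H$; transversality with $\eta(\g_-)$ gives $u\neq0$. Since $\g^nx\to\g_+$ and $\xi$ is continuous, $\R(a^nu+g^nw)=\xi(\g^nx)\to\R u$, which forces $g^nw/a^n\to0$; hence $w$ lies in the sum $H_{<}$ of the generalized eigenspaces of $g|_H$ with eigenvalue of modulus strictly less than $|a|$ (a component of modulus $\geq|a|$, whether a Jordan block or a complex pair, would keep the directions from converging to $\R u$). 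Thus $\xi(x)\subset\ell\oplus H_{<}$ for all such $x$, hence for all $x\in\bord\G$ by continuity, and irreducibility (the span of $\xi(\bord\G)$ is $V$) gives $H=H_{<}$. This proves proximality with $g_+=\ell$ and $g_-=H$ simultaneously, so your Step 3 duality is not needed to identify the repelling hyperplane; as a consistency check it is fine, provided you apply the dual representation to $\g^{-1}$ rather than $\g$ so that the attracting line $\eta(\g_-)$ of ${}^t\rho(\g)$ is the annihilator of $g_-$.
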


Fix some strictly convex representation $\rho:\G\to\pgl(V).$ The choice of a norm $\|\ \|$ on $V$ induces a H\"older cocycle $\co:\G\times\bord\G\to\R$ defined as $$\co(\g,x)=\log\frac{\|\rho(\g) v\|}{\|v\|}$$ where $v$ belongs to the line $\xi(x).$ We remark that lemma \ref{lema:loxodromic} implies that the period $\co(\g,\g_+)$ is exactly $\lambda_1(\rho\g),$ the logarithm of the spectral radius of $\rho\g.$

The following proposition is key in this work, it states that the cocycle $\co$ has finite  exponential growth rate.

\begin{prop}[\S5 of A.S.\cite{quantitative}]\label{prop:growth} Let $\rho:\G\to\pgl(V)$ be strictly convex, then the cocycle $\co$ has finite exponential growth rate, this is $$\limsup_{s \to\infty} \frac{\log\#\{[\g]\in[\G]:\lambda_1(\rho\g)\leq s\}}s<\infty$$ where $[\g]$ is the conjugacy class of $\g$ in $\G.$
\end{prop}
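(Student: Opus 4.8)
The plan is to control the period counting function $\#\{[\g]\in[\G]:\lambda_1(\rho\g)\leq s\}$ by comparing the spectral radius $\lambda_1(\rho\g)$ with the word length (equivalently, the translation length in $\w M$) of $\g$. Recall that for the geodesic flow $\phi_t$ on $\G\/T^1\w M$ the periodic orbits are in bijection with conjugacy classes $[\g]$, and the period $p([\g])$ equals the translation length of $\g$, which grows linearly in word length. The number of $[\g]$ with $p([\g])\leq s$ grows exactly like $e^{h_{\tope}(\phi_t)s}$ by Proposition \ref{prop:bowenentropia}. So it suffices to produce a \emph{lower} bound of the form $\lambda_1(\rho\g)\geq a\, p([\g]) - b$ for constants $a>0,b$ independent of $\g$: then $\lambda_1(\rho\g)\leq s$ forces $p([\g])\leq (s+b)/a$, and the desired $\limsup$ is bounded above by $a^{-1}h_{\tope}(\phi_t)<\infty$.

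To get that linear lower bound I would use the cocycle $\co$ and Ledrappier's theorem (Theorem \ref{teo:ledrappier}): there is a $\G$-invariant H\"older function $F_{\co}:T^1\w M\to\R$ with $\int_{[\g]}F_{\co}=\co(\g,\g_+)=\lambda_1(\rho\g)$ for every $\g$. The key point, which is where the strict convexity hypothesis really enters, is that $F_{\co}$ is \emph{cohomologous to a strictly positive function}. Granting that, there is $a>0$ with $\int_{[\g]}F_{\co}\geq a\cdot p([\g])$ for all periodic orbits (the cohomology correction is bounded, absorbed into the additive constant after a Liv\v sic coboundary argument, or simply: a H\"older function cohomologous to one bounded below by $a'>0$ has period sums bounded below by $a'p([\g])$). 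This is exactly the mechanism behind Lemma \ref{lema:lemaledrappier}: if $F_{\co}$ has positive period sums and finite exponential growth rate $h$ then $P(-hF_{\co})=0$ and the infimum of $\int_\tau F_{\co}/p(\tau)$ over periodic orbits is strictly positive. So really the statement reduces to showing $h_{\co}<\infty$, equivalently that $F_{\co}$ is cohomologous to a positive function — then finiteness of $h_{\co}$ follows because the growth rate of periodic orbits weighted by a positive H\"older potential is finite (it is at most $a^{-1}h_{\tope}(\phi_t)$ as above, or one invokes the reparametrized flow: reparametrizing $\phi_t$ by a positive representative of $F_{\co}$ produces an Anosov reparametrization whose periodic orbits have periods $\lambda_1(\rho\g)$ and whose topological entropy, finite by Proposition \ref{prop:bowenentropia}, is precisely $h_{\co}$).

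Thus the heart of the matter, and the main obstacle, is proving that $F_{\co}$ is cohomologous to a positive function. Here I would argue that $\co$ cannot have arbitrarily small or negative periods relative to word length: by Lemma \ref{lema:loxodromic} every $\rho(\g)$ is proximal with attracting line $\xi(\g_+)$, and the transversality $\xi(x)\not\subset\ker\eta(y)$ for $x\neq y$ together with the H\"older equivariant maps forces a uniform ``gap'' — concretely, one shows $\co(\g,x)\geq \lambda_1(\rho\g)-C$ uniformly, or one shows directly that no $\phi_t$-invariant probability $m$ has $\int F_{\co}\,dm\leq 0$, which by the ergodic decomposition and a standard argument (approximating invariant measures by those supported on periodic orbits, using that $\int_{[\g]}F_{\co}=\lambda_1(\rho\g)>0$ and that $\lambda_1$ grows at least linearly because the $\rho(\g)$ stay uniformly proximal) gives $\inf_m\int F_{\co}\,dm>0$; then a theorem of Liv\v sic/Bowen type provides the positive cohomologous representative. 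Since the statement is attributed to \S5 of A.S.\cite{quantitative}, I expect the cleanest route is exactly this: establish uniform proximality and a transversality-based lower bound $\lambda_1(\rho\g)\geq a\ell(\g)-b$ on generators/all elements by a subadditivity (Fekete) argument along geodesics using the H\"older equivariant maps, deduce $\int F_{\co}\,dm>0$ uniformly, invoke the positive coboundary representative, and conclude finiteness of $h_{\co}$ from finiteness of $h_{\tope}(\phi_t)$ via the comparison $\lambda_1(\rho\g)\gtrsim p([\g])$.
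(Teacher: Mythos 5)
Your reduction is fine as far as it goes: since the periodic orbits of the geodesic flow have at most exponential growth, it would indeed suffice to establish a linear lower bound $\lambda_1(\rho\g)\geq a\,|\g|-b$ with $a>0$, equivalently that $F_{\co}$ is cohomologous to a positive function. (Note, for the record, that the paper itself does not prove this proposition; it imports it from \S5 of A.S.\cite{quantitative}.) The genuine gap is that the one step you yourself call the heart of the matter is never proved, and the two routes you sketch for it do not work as written. The inequality ``$\co(\g,x)\geq\lambda_1(\rho\g)-C$ uniformly'' is false uniformly in $x$ (for $x$ near $\g_-$ the cocycle is governed by the contracted part of $\rho(\g)$, not by $\lambda_1$), and even on a region where such a comparison holds it only compares the cocycle with its own period, so it cannot produce growth in $|\g|$. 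The measure-theoretic variant is circular: approximating invariant measures by periodic-orbit measures only gives $\int F_{\co}\,dm\geq0$, and to upgrade this to the uniform strict positivity needed for the positive-Liv\v sic step you appeal to ``$\lambda_1$ grows at least linearly because the $\rho(\g)$ stay uniformly proximal'' --- which is exactly the statement to be proved, and uniform proximality over all conjugacy classes is not automatic (it requires choosing good representatives and a quantitative estimate). You also cannot invoke Lemma \ref{lema:funcionpositiva}, since in this paper that lemma deduces ``cohomologous to a positive function'' \emph{from} finiteness of the exponential growth rate, i.e.\ the implication you are trying to reverse.

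There is a non-circular way to fill the hole with tools already present in the paper, namely the contraction argument that occupies the first half of the proof of Corollary \ref{cor:cono} (which, importantly, does not use Proposition \ref{prop:growth}): by Lemma \ref{lema:lambda2} and irreducibility, $\lim_n\frac1n\log d_\P(\rho(\g)^n\xi(x),\xi(\g_+))=\lambda_2(\rho\g)-\lambda_1(\rho\g)$ for a suitable $x\neq\g_-$; the H\"older property of $\xi$ together with Lemma \ref{lema:lambda2hyp} then gives $\lambda_1(\rho\g)-\lambda_2(\rho\g)\geq\kappa b\,|\g|$, and since the $\lambda_i(\rho\g)$ sum to zero one has $d\,\lambda_1(\rho\g)\geq\lambda_1(\rho\g)-\lambda_2(\rho\g)$, hence $\lambda_1(\rho\g)\geq(\kappa b/d)\,|\g|$. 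Combined with the finiteness of the entropy of the geodesic flow (Proposition \ref{prop:bowenentropia}) this yields the proposition, and only afterwards does one recover, via Lemmas \ref{lema:lemaledrappier} and \ref{lema:funcionpositiva}, the positivity statement you wanted to take as the starting point.
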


Let $\frak v$ be the Cartan algebra $$\frak v=\{(w_1,\dots,w_d)\in\R^d:w_1+\ldots+w_d=0\}$$ of $\PGL d.$ We will show that the limit cone of a strictly convex representation doesn't intersect the walls $\{w\in\frak v^+:w_1=w_2\}$ and $\{w\in\frak v^+:w_{d-1}=w_d\}.$ The following lemma is from Benoist\cite{convexes1}.

\begin{lema}[Benoist\cite{convexes1}]\label{lema:lambda2} Let $g\in\pgl(V)$ be proximal and let $V_{\lambda_2(g)}$ be the sum of the  characteristic spaces of $g$ whose eigenvalue is of module $\exp\lambda_2(g).$ Then for every $v\notin\P(g_-)$ with non zero component in $V_{\lambda_2(g)}$ one has $$\lim_{n\to\infty}\frac{\log d_\P(g^n(v),g_+)}n= \lambda_2(g)-\lambda_1(g).$$
\end{lema}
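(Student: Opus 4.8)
The plan is to decompose the dynamics of $g$ according to the eigenvalue moduli, so that everything reduces to a computation in the invariant subspace carrying the first two "scales" of growth. First I would fix a lift of $g$ to $\GL(V)$ and write the generalized eigenspace decomposition $V = V_{\lambda_1(g)} \oplus V_{\lambda_2(g)} \oplus W$, where $V_{\lambda_1(g)} = g_+$ is the one-dimensional attracting line (by proximality), $V_{\lambda_2(g)}$ is the sum of characteristic subspaces whose eigenvalue has modulus $\exp\lambda_2(g)$, and $W$ collects the remaining (strictly smaller-modulus) characteristic subspaces. The repelling hyperplane is $g_- = V_{\lambda_2(g)} \oplus W$. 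Write $v = v_1 + v_2 + w$ according to this splitting; the hypothesis $v \notin \P(g_-)$ means $v_1 \neq 0$, and the hypothesis on $V_{\lambda_2(g)}$ means $v_2 \neq 0$.

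The key step is to estimate, for the action on the projective space, the distance $d_\P(g^n(v), g_+)$. Up to bounded multiplicative error this distance is comparable to the ratio of the norm of the component of $g^n v$ \emph{transverse} to $g_+$ over the norm of the component \emph{along} $g_+$; i.e. $d_\P(g^n v, g_+) \asymp \|\pi(g^n v)\| / \|g^n v\|$ where $\pi$ is the projection onto $g_-$ along $g_+$. Since $v_1 \neq 0$ and $g^n$ acts on the line $g_+$ by the scalar of modulus $\exp(n\lambda_1(g)) + o(\cdot)$, we get $\|g^n v\| \sim e^{n\lambda_1(g)}$ up to subexponential factors. For the transverse part, $\pi(g^n v) = g^n(v_2) + g^n(w)$, and here one uses the standard fact about a linear map restricted to characteristic subspaces: on $V_{\lambda_2(g)}$ the norm grows like $e^{n\lambda_2(g)}$ times at most a polynomial in $n$, and it is genuinely that size because $v_2 \neq 0$ (the polynomial factor from a Jordan block cannot cause cancellation in norm asymptotics, and the possibly several eigenvalues of the same modulus $\exp\lambda_2(g)$ contribute with independent oscillating phases that do not drop the exponential rate); meanwhile on $W$ the growth is $e^{n\lambda'}$ with $\lambda' < \lambda_2(g)$, hence negligible. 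Therefore $\|\pi(g^n v)\| = e^{n\lambda_2(g) + o(n)}$, and combining,
\[
\frac{\log d_\P(g^n v, g_+)}{n} = \frac{n\lambda_2(g) - n\lambda_1(g) + o(n)}{n} \to \lambda_2(g) - \lambda_1(g).
\]

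The main obstacle is the justification that the transverse component really grows at the full rate $e^{n\lambda_2(g)+o(n)}$ rather than something smaller: one must rule out cancellation between distinct eigenvalues of modulus $\exp\lambda_2(g)$ (possibly complex, possibly with nontrivial Jordan blocks). The clean way to handle this is to pass to the $\limsup$ and $\liminf$ separately: the $\limsup \tfrac1n\log\|g^n v_2\| = \lambda_2(g)$ is immediate from the operator norm bound, and for the $\liminf$ one notes that $\tfrac1n\log\|g^n v_2\| \to \lambda_2(g)$ holds for \emph{every} nonzero $v_2 \in V_{\lambda_2(g)}$ because the subadditive/Oseledets-type behavior of a single operator on a subspace where all eigenvalues share the same modulus forces the exact rate — equivalently, rescaling $g|_{V_{\lambda_2(g)}}$ by $e^{-\lambda_2(g)}$ gives an operator all of whose eigenvalues lie on the unit circle, so its powers grow at most polynomially and, since no proper invariant subspace can absorb a generic vector's norm decay, also do not decay exponentially. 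I would cite Benoist\cite{convexes1} for the precise form of this elementary linear-algebra fact and spell out only the projective-distance comparison in detail.
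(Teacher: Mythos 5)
Your proof is correct and follows essentially the same route as the paper: the paper passes to the affine chart $w\mapsto\R(w+u)$ on the complement of $\P(g_-)$, in which $g$ acts by $w\mapsto a^{-1}gw$ and the distance to $g_+$ becomes $\|g^nw\|/|a|^n$, which is exactly your ratio $\|\pi(g^nv)\|/\|g^nv\|$ up to bounded factors. Both arguments then reduce to the same elementary fact that $\tfrac1n\log\|g^nw\|\to\lambda_2(g)$ for $w\in g_-$ with nonzero component in $V_{\lambda_2(g)}$ (for the lower bound, the cleanest justification is that the inverse of the unit-modulus rescaled operator also has polynomially bounded powers), which the paper likewise leaves as ``a linear algebra argument.''
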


\begin{proof} Consider $u\in g_+$ and $a$ the eigenvalue of $u.$ By definition
one has $\lambda_1(g)=\log|a|.$ We consider then $T:g_-\to\P(\R^d)$ as
$Tw=\R(w+u).$ $T$ identifies the hyperplane $g_-$ to the complement of $\P(g_-)$
in $\P(V).$ The action of $g$ on $\P(V)$ is read, via this identification,
as $\hat g:g_-\to g_-$ $$\hat g(w)=\frac 1agw.$$

One then finds, with a linear algebra argument, that $$\frac 1n \log
\frac{\|g^nw\|}{|a|^n}\to\lambda_2(g)-\lambda_1(g)$$ for every $w\in g_-$ that
is not contained in the characteristic spaces of eigenvalue with module $<
\exp\lambda_2(g).$
\end{proof}

\begin{lema}[cf. Yue\cite{yue}]\label{lema:lambda2hyp} There exist two positive
constants $a$ and $b$ such that for every $\g\in\G$ and any point
$x\in\bord\G-\{\g_-\}$ one has $$-a|\g|\leq\lim_{n\to\infty}\frac{\log
d_o(\g^nx,\g_+)}n\leq -b|\g|.$$
\end{lema}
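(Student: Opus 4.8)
The plan is to read the left-hand side off from the geometry of $\w M$. Identifying $\bord\G$ with the visual boundary, $\g_+$ and $\g_-$ are the two endpoints of the axis $A_\g$ of the hyperbolic isometry $\g$, and for $x\neq\g_-$ one has $\g^nx\to\g_+$, so the quantity in question is $\le 0$. Since $\G\backslash\w M$ is compact the curvature of $\w M$ is pinched, say $-a^2\le K_{\w M}\le-b^2<0$; in particular $\w M$ is a $\delta$-hyperbolic Hadamard manifold, and for the boundary metric $d_o$ there are constants $0<b'\le a'$ and $C$ with
\[
e^{-a'\,(\xi\mid\eta)_o-C}\ \le\ d_o(\xi,\eta)\ \le\ e^{-b'\,(\xi\mid\eta)_o+C}
\]
for all $\xi,\eta\in\bord\G$, where $(\xi\mid\eta)_o$ is the Gromov product based at $o$ (if $d_o$ is a genuine visual metric one may take $a'=b'$). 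Thus the whole statement reduces to estimating $(\g^nx\mid\g_+)_o$ up to an error bounded in $n$.

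To do this I would use the standard $\delta$-hyperbolic identity $(\xi\mid\eta)_o=d\!\left(o,(\xi,\eta)\right)+O(\delta)$, where $(\xi,\eta)$ denotes the geodesic of $\w M$ with endpoints $\xi,\eta$, together with the equivariance $(\g^nx,\g_+)=\g^n(x,\g_+)$, to obtain $(\g^nx\mid\g_+)_o=d\!\left(\g^{-n}o,(x,\g_+)\right)+O(\delta)$. Now $(x,\g_+)$ is a fixed geodesic line: it is forward asymptotic to $A_\g$ towards $\g_+$, and since $x\neq\g_-$ the complementary part of it heading to $x$ stays uniformly away from the $\g_-$-ray of $A_\g$. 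On the other hand $\g^{-n}o$ stays at distance $d(o,A_\g)$ from $A_\g$, with nearest-point projection onto $A_\g$ displaced by exactly $n\ell(\g)$ towards $\g_-$, where $\ell(\g)$ is the translation length of $\g$, i.e. the length of the closed geodesic of $\G\backslash\w M$ carrying $[\g]$ — the natural scale on the right-hand side, since the left-hand side depends only on $[\g]$ and $|\g|$ is this translation length. Comparing lengths along $A_\g$ then gives $d\!\left(\g^{-n}o,(x,\g_+)\right)=n\ell(\g)+O(1)$, the $O(1)$ depending on $\g$, $x$ and $o$ but not on $n$; hence $(\g^nx\mid\g_+)_o=n\ell(\g)+O(1)$.

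Feeding this into the displayed comparison gives $-a'\ell(\g)+O(1/n)\le\frac1n\log d_o(\g^nx,\g_+)\le-b'\ell(\g)+O(1/n)$, so $\liminf_n\frac1n\log d_o(\g^nx,\g_+)\ge-a'|\g|$ and $\limsup_n\frac1n\log d_o(\g^nx,\g_+)\le-b'|\g|$; setting $a:=a'$, $b:=b'$ this is exactly the claimed two-sided bound, provided the limit exists. Existence is automatic when $d_o$ is a visual metric; for the angular metric it follows from the fact that the separation rate of the rays $[o,\g^nx)$ and $[o,\g_+)$ is, up to bounded error, the unstable Jacobian of the geodesic flow integrated along a segment of $A_\g$ of length $n\ell(\g)+O(1)$, which wraps $n+O(1)$ times around the closed geodesic $[\g]$ and hence equals $n\chi(\g)+O(1)$ for a constant $\chi(\g)\in[\,b|\g|,a|\g|\,]$ attached to $[\g]$ — this is the analysis of Yue\cite{yue}, whom we follow.

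The one delicate point is the estimate $d\!\left(\g^{-n}o,(x,\g_+)\right)=n\ell(\g)+O(1)$: one must check that the nearest point of $(x,\g_+)$ to $\g^{-n}o$ is attained on the part that fellow-travels $A_\g$ (which uses that $(x,\g_+)$ is genuinely asymptotic to $A_\g$, so its projection to $A_\g$ covers a ray towards $\g_+$), and that the part of $(x,\g_+)$ running off to $x$ does not come back near $\g^{-n}o$ (this is precisely where $x\neq\g_-$ enters: that part is uniformly separated from the $\g_-$-ray of $A_\g$, whereas $\g^{-n}o$ converges to $\g_-$). Everything else is routine comparison geometry, and in a streamlined write-up one may simply quote Yue\cite{yue} for the boundary-metric estimates and for the existence of the limit, carrying out only the reduction above.
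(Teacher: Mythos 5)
The paper does not actually prove this lemma: it is stated with ``cf.\ Yue'' and no argument, so there is nothing internal to compare your proof against. Your write-up is a correct and essentially standard derivation of the kind Yue carries out. The reduction is sound: pinched curvature $-a^2\le K\le -b^2$ gives the two-sided visual-metric comparison $e^{-a(\xi\mid\eta)_o-C}\le d_o(\xi,\eta)\le e^{-b(\xi\mid\eta)_o+C}$ (Heintze--Im Hof), the Gromov product is the distance to the connecting geodesic up to $O(\delta)$, equivariance converts $(\g^nx\mid\g_+)_o$ into $d(\g^{-n}o,(x,\g_+))$, and the estimate $d(\g^{-n}o,(x,\g_+))=n\ell(\g)+O(1)$ is exactly where $x\neq\g_-$ enters, as you say; your identification of $|\g|$ with the translation length is also the reading forced by the paper's use of the lemma in Corollary \ref{cor:cono} (via Ledrappier's lemma, where $p(\tau)$ is the period of the closed geodesic). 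Two minor remarks. First, your argument as written only delivers $\liminf\ge -a|\g|$ and $\limsup\le -b|\g|$; the existence of the limit in variable curvature (it equals the Lyapunov exponent of $\g$ at $\g_+$, i.e.\ the integral of the unstable Jacobian over the closed geodesic) is genuinely the content of Yue's analysis, and you are right to defer it there --- note that for everything the paper does with this lemma only the upper bound on the $\limsup$ is used. Second, the uniformity of $a$ and $b$ over all $\g$ is what the lemma really asserts, and your constants do come only from the curvature pinching, so that point is handled.
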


One obtains the following corollary:

\begin{cor}\label{cor:cono} Let $\rho:\G\to\pgl(d,\R)$ be strictly convex, then there exists $k>0$ such that for any $\g\in\G$ one has $$\frac{\lambda_1 \rho(\g)-\lambda_2\rho(\g)}{\lambda_1(\rho\g)}>k.$$ Consequently the limit cone of $\rho(\G)$ doesn't intersect the walls $\{w\in\frak v^+:w_1=w_2\}$ and $\{w\in\frak v^+:w_{d-1}=w_d\}.$
\end{cor}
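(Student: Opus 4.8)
The plan is to produce a uniform positive lower bound for the ratio $(\lambda_1-\lambda_2)(\rho\g)/\lambda_1(\rho\g)$ by combining the two preceding lemmas, and then to translate that uniform bound on individual elements into the statement that the limit cone avoids the two walls. The key observation is that Lemma \ref{lema:lambda2} and Lemma \ref{lema:lambda2hyp} both compute the \emph{same} asymptotic exponent — the contraction rate of $\rho(\g)^n$ towards its attracting fixed point — from two different sides: the linear-algebra side gives $\lambda_2(\rho\g)-\lambda_1(\rho\g)$, while the hyperbolic-geometry side gives something comparable to $-|\g|$. The point is that the equivariant map $\xi$ conjugates the dynamics of $\g$ on $\bord\G$ to the dynamics of $\rho(\g)$ on $\P(V)$, so these two limits must agree along $\xi$.

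First I would fix $\g\in\G$. By Lemma \ref{lema:loxodromic}, $\rho(\g)$ is proximal with attracting line $\xi(\g_+)$ and repelling hyperplane $\ker\eta(\g_-)$. Pick a point $x\in\bord\G$ distinct from both $\g_+$ and $\g_-$; then by the transversality hypothesis $\xi(x)\notin\P(\ker\eta(\g_-))=\P(\rho(\g)_-)$, and moreover, for generic such $x$, $\xi(x)$ has nonzero component in the $\lambda_2$-eigenspace of $\rho(\g)$ (one must check this is not an obstruction — either $\lambda_2$ is actually attained by some $\xi(x)$, using that $\xi$ is nonconstant and that the repelling hyperplane has codimension one, or one argues directly via the formula in Lemma \ref{lema:lambda2} that the limit is $\geq \lambda_2(\rho\g)-\lambda_1(\rho\g)$ for every admissible $x$ and $=$ for generic $x$). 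Applying Lemma \ref{lema:lambda2} to $v=\xi(x)$ gives
$$\lim_{n\to\infty}\frac{\log d_\P(\rho(\g)^n\xi(x),\xi(\g_+))}n=\lambda_2(\rho\g)-\lambda_1(\rho\g).$$
On the other hand, $\rho(\g)^n\xi(x)=\xi(\g^n x)$ and $\xi(\g_+)=\xi((\g)_+)$, and since $\xi$ is H\"older continuous (hence the distance $d_\P(\xi(\g^n x),\xi(\g_+))$ is controlled above and below by a power of $d_o(\g^n x,\g_+)$ on a fixed compact set), the limit of $\frac1n\log d_\P(\xi(\g^n x),\xi(\g_+))$ equals a fixed positive multiple $\alpha$ of $\lim_n\frac1n\log d_o(\g^n x,\g_+)$, which by Lemma \ref{lema:lambda2hyp} lies in $[-a|\g|,-b|\g|]$ (after absorbing $\alpha$). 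Comparing,
$$\lambda_1(\rho\g)-\lambda_2(\rho\g)\ \geq\ \alpha b\,|\g|.$$
Since $\lambda_1(\rho\g)\leq C|\g|$ for some constant $C$ (the cocycle $\co$ is H\"older in $\g$, i.e. $\co(\g,\cdot)$ has a norm bounded linearly in $|\g|$, which is exactly the kind of bound underlying Proposition \ref{prop:growth}; alternatively $\lambda_1(\rho\g)\le\|\log\|\rho(\g)\|\|\le C|\g|$), we get
$$\frac{\lambda_1(\rho\g)-\lambda_2(\rho\g)}{\lambda_1(\rho\g)}\ \geq\ \frac{\alpha b}{C}\ =:\ k\ >\ 0$$
for all $\g\in\G$ (for $\g=e$ there is nothing to prove). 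The bound for $\lambda_{d-1}-\lambda_d$ follows by applying the same argument to the dual representation $\rho^*$ on $V^*$, which is again strictly convex with equivariant maps $\eta$ and $\xi$ (transversality is symmetric), and noting $\lambda_i(\rho^*\g)=-\lambda_{d+1-i}(\rho\g)$.

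For the conclusion about the limit cone: by Theorem \ref{teo:benoistlimite} (or by the trivial inclusion $a(\rho\g^n)/n\to\lambda(\rho\g)$ together with the definition of asymptotic cone) the limit cone of $\rho(\G)$ is the closed cone spanned by $\{\lambda(\rho\g):\g\in\G\}$. Every such $\lambda(\rho\g)$, once normalized, lies in the compact slice $\{w\in\frak v^+:\|w\|=1,\ w_1-w_2\geq k\,\lambda_1, \ w_{d-1}-w_d\ge\ldots\}$ — more precisely, writing $\lambda(\rho\g)=(\lambda_1,\ldots,\lambda_d)$ we have $\lambda_1-\lambda_2\geq k\lambda_1\geq \frac{k}{d-1}\|\lambda(\rho\g)\|_1\geq k'\|\lambda(\rho\g)\|$ for a uniform $k'>0$ (using $\lambda_1\geq \frac1{d-1}\sum_{i}|\lambda_i|$ for a traceless tuple with $\lambda_1$ the max). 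Hence every normalized Jordan vector lies in the closed set $\{w\in\frak v^+:\|w\|=1,\ w_1-w_2\geq k'\}\cap\{w_{d-1}-w_d\geq k'\}$, which is disjoint from both walls; the closed cone they generate is therefore also disjoint from the walls (a closed cone generated by vectors in a closed set bounded away from a linear subspace, intersected with the unit sphere, stays in that closed set). This gives the claim.

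\medskip

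\noindent\textbf{Main obstacle.} The delicate point is the choice of the test point $x\in\bord\G$ in the first step: Lemma \ref{lema:lambda2} requires $\xi(x)$ to have nonzero component in the $\lambda_2$-generalized eigenspace of $\rho(\g)$, and a priori the image of $\xi$ could conceivably avoid that eigenspace. I expect this is handled either by observing that Lemma \ref{lema:lambda2} in fact gives the inequality ``$\lim \geq \lambda_2-\lambda_1$'' for \emph{every} $v\notin\P(g_-)$ (the component in the $\lambda_2$-space only makes the convergence exact, not larger), which is all that is needed for the lower bound on $\lambda_1-\lambda_2$; or else by a Zariski-density / irreducibility argument showing $\xi(\bord\G)$ cannot be contained in a proper $\rho(\g)$-invariant subspace. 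The rest is bookkeeping with H\"older exponents and the elementary convex-cone fact at the end.
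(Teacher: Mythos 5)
Your main chain of estimates is the same as the paper's: apply Lemma \ref{lema:lambda2} to a point $\xi(x)$, use the H\"older continuity of $\xi$ to transfer the contraction rate to $\bord\G$, bound that rate by Lemma \ref{lema:lambda2hyp}, compare $\lambda_1(\rho\g)$ with $|\g|$, and handle the wall $w_{d-1}=w_d$ by the dual representation; the closing convex-cone argument is also as intended. One point, however, needs correcting. Your first proposed fix for the ``main obstacle'' has the inequality backwards: for an arbitrary $v\notin\P(g_-)$ one only gets $\limsup_n \frac1n\log d_\P(g^n v,g_+)\le \lambda_2(g)-\lambda_1(g)$ --- if the component of $v$ in $V_{\lambda_2(g)}$ vanishes, the convergence to $g_+$ is \emph{faster}, so the exponent is more negative, not less. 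But your chain needs the reverse inequality $\lambda_2-\lambda_1\le\lim$, i.e.\ it needs genuine equality, i.e.\ a point $x$ with $\xi(x)\notin\P(g_-)$ whose $V_{\lambda_2}$-component is nonzero. So that fallback cannot rescue the argument; the irreducibility argument you list only as a second option is in fact necessary, and it is exactly what the paper uses: since $\rho$ is irreducible (part of the definition of strictly convex), the span of $\xi(\bord\G)$ is all of $V$, so $\xi(\bord\G-\{\g_-\})$ cannot be contained in the proper subspace of vectors with vanishing $V_{\lambda_2}$-component, while transversality gives $\xi(x)\notin\P(g_-)$ for every $x\neq\g_-$.

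A smaller difference concerns the comparison $\lambda_1(\rho\g)\le C|\g|$. The paper gets it by applying Proposition \ref{prop:growth} together with Ledrappier's Lemma \ref{lema:lemaledrappier} to the cocycle $\co$, which yields $0<1/m<\lambda_1(\rho\g)/|\g|<m$ with $|\g|$ the period of the associated closed geodesic (translation length), the same quantity that appears in Lemma \ref{lema:lambda2hyp}. Your bound via $\log\|\rho(\g)\|\le C|\g|$ (or via the cocycle norm over generators) controls $\lambda_1$ by the \emph{word} length; it can be repaired, since both sides of the desired inequality are conjugacy invariants, by passing to a shortest representative of the conjugacy class and using that minimal word length is comparable to translation length, but as written it silently conflates the two lengths, whereas the paper's route avoids the issue altogether.
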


\begin{proof} Since $\rho(\g)$ is proximal and its attractive line is
$\xi(\g_+)$ one finds, after lemma \ref{lema:lambda2} and that fact that $\rho$
is irreducible, that $$\lim_{n\to\infty}\frac{\log
d_{\P}(\rho(\g)^n\xi(x),\xi(\g_+))}n=\lambda_2\rho(\g)-\lambda_1\rho(\g)$$ for a
point $x\in\bord\G-\{\g_-\}.$ The fact that $\xi$ is H\"older then implies that 

$$\lambda_2\rho(\g)-\lambda_1\rho(\g)\leq\lim_{n\to\infty}\frac 1n \log Cd(\g^n
x,\g_+)^\kappa$$ $$=\lim_{n\to\infty}\kappa\frac 1n \log d_o(\g^n x,\g_+).$$
Lemma \ref{lema:lambda2hyp} implies that this quantity is smaller than $-\kappa
b|\g|.$ In order to finish the proof we need to compare $|\g|$ with $\lambda_1(\rho\g)$ for wich we apply proposition \ref{prop:growth} together with Ledrappier\cite{ledrappier}'s lemma \ref{lema:lemaledrappier} to the cocycle $\co:$ $$0<\frac 1m<\inf_{[\g]} \frac{\lambda_1 (\rho(\g))} {|\g|}\leq \sup_{[\g]}\frac{\lambda_1(\rho(\g))}{|\g|}< m$$ for some constant $m>1.$
\end{proof}

\begin{prop}[Proposition 4.10 of Guichard-Weinhard\cite{guichardweinhard} + proposition 2.1 of Labourie\cite{labourie}]\label{prop:GW1} The function that associates to a strictly convex representation its equivariant maps is continuous and the H\"older exponent of the equivariant maps can be chosen locally constant.
\end{prop}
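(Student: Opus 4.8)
The plan is to derive the statement by combining the characterization of strictly convex representations as (projective) Anosov representations with the structural stability of the Anosov splitting, so the proof is an assembly of Proposition 2.1 of Labourie\cite{labourie} and Proposition 4.10 of Guichard-Weinhard\cite{guichardweinhard}; let me describe how the pieces fit.

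First I would reformulate the data of a strictly convex $\rho:\G\to\pgl(V)$ dynamically. Over the compact manifold $\G\/T^1\w M$ form the flat bundle $E_\rho=\G\/(T^1\w M\times V)$, where $\G$ acts by $\g\cdot(v,w)=(\g v,\rho(\g)w)$, equipped with the lift of the geodesic flow given by the flat connection $(v,w)\mapsto(\phi_t v,w)$. Writing $v_+,v_-\in\bord\G$ for the forward and backward endpoints of the geodesic through $v\in T^1\w M$, the equivariant maps yield a continuous flow-invariant line subbundle $\Xi$ with fiber $\xi(v_+)$ over the class of $v$, and a continuous flow-invariant hyperplane subbundle $H$ with fiber $\ker\eta(v_-)$; the transversality hypothesis says exactly that $\Xi\oplus H=E_\rho$, since $v_+\neq v_-$. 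Proposition 2.1 of Labourie\cite{labourie} then applies: for an irreducible representation carrying such a continuous invariant transversal splitting, the splitting is automatically dominated by the geodesic flow, i.e. $\rho$ is projective Anosov, and $(\xi,\eta)$ is its pair of Anosov limit maps. I would also record the uniqueness consequence needed later: by Lemma \ref{lema:loxodromic}, equivariance forces $\xi(\g_+)=\xi$ of the attracting line of $\rho(\g)$ and $\ker\eta(\g_-)$ of the repelling hyperplane, so $\xi$ and $\eta$ are prescribed on the dense set of endpoints of periodic geodesics, hence — being continuous — everywhere; thus $(\xi,\eta)$ depends only on $\rho$.

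Next I would invoke Proposition 4.10 of Guichard-Weinhard\cite{guichardweinhard}. The domination inequality defining the Anosov property is a finitely checkable exponential-contraction condition along the flow and is stable under $C^0$-small perturbations of the flat connection; hence the set of projective Anosov representations — in particular, by the previous paragraph, the set of strictly convex ones — is open in $\hom(\G,\pgl(V))$, the Anosov limit maps vary continuously in the $C^0$ topology on that open set (a graph-transform/shadowing argument), the contraction constants can be taken uniform on a neighbourhood of any given Anosov representation, and the Hölder exponent of the limit maps is controlled from below by the ratio of those constants to the fixed contraction rates of the geodesic flow, so it can be chosen locally constant.

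Finally I would combine the two. Given a strictly convex $\rho_0$, every nearby representation is projective Anosov, and those among them that are again strictly convex have, by the uniqueness above, equivariant maps equal to their Anosov limit maps; continuity of $\rho\mapsto(\xi_\rho,\eta_\rho)$ on the set of strictly convex representations is then precisely the continuity of the Anosov limit map, and the assertion about the Hölder exponent is the second statement of Guichard-Weinhard\cite{guichardweinhard}. The one genuinely delicate point — and the only real obstacle — is the identification of the ``soft'' equivariant maps of the definition with the ``dynamical'' Anosov limit maps, i.e. upgrading a merely continuous invariant transversal splitting to a dominated one: this is exactly where irreducibility and the proximal dynamics of $\rho(\G)$ on $\P(V)$ enter through Labourie's Proposition 2.1; once that identification is in place, everything else is the standard stability theory of dominated splittings.
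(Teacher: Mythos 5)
Your assembly is correct and is essentially the paper's own argument, since the paper offers no proof beyond the bracketed attribution: strictly convex representations are identified with projective Anosov ones (their limit maps coinciding with the given equivariant maps, by proximality and density of fixed points), and structural stability of the Anosov property then yields openness, continuity of the maps, and a locally constant H\"older exponent. The only inaccuracy is bibliographic, not mathematical: relative to the paper's intended use, you have interchanged the two citations --- the implication ``irreducible plus transverse continuous equivariant maps implies projective Anosov'' is Guichard--Wienhard's Proposition 4.10, while the continuity of the equivariant maps in the representation with locally constant H\"older exponent is what the paper draws from Labourie's Proposition 2.1.
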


We are now able to prove the following proposition. Recall that $\lambda_d(g)$ is the logarithm of the modulus of $g$'s smallest eigenvalue.

\begin{prop}\label{prop:continualambda1} The functions $$\rho\mapsto h_1(\rho):= \limsup_{s\to\infty} \frac{\log \#\{[\g]\in[\G]:\lambda_1(\rho\g)\leq s\}}s$$ and $$\rho\mapsto h_{1d}(\rho):= \limsup_{s\to\infty} \frac{\log \#\{[\g]\in[\G]:\lambda_1(\rho\g)-\lambda_d(\rho\g)\leq s\}}s$$ are continuous among strictly convex representations.
\end{prop}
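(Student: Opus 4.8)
The plan is to reduce both continuity statements to the analyticity results already established for Hölder cocycles. The key point is that a strictly convex representation $\rho$ carries two natural Hölder cocycles on $\bord\G$: the cocycle $\co^\rho$ defined via the equivariant map $\xi$ into $\P(V)$ (whose periods are $\lambda_1(\rho\g)$), and a ``dual'' cocycle $\vco^\rho$ defined analogously via the equivariant map $\eta$ into $\P(V^*)$, whose periods are $-\lambda_d(\rho\g)$ (since the top eigenvalue of the dual action $\rho(\g)$ on $V^*$ has logarithm $-\lambda_d(\rho\g)$, and the attracting line for this action is $\eta(\g_-)$ by the dual of Lemma~\ref{lema:loxodromic}). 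Hence $h_1(\rho)=h_{\co^\rho}$ and $h_{1d}(\rho)=h_{c}$ where $c=\co^\rho+\vco^\rho$ is the cocycle with periods $(\lambda_1-\lambda_d)(\rho\g)$. By Proposition~\ref{prop:growth} applied to $\rho$ and to its dual (or more directly, by Corollary~\ref{cor:cono} giving $\lambda_1-\lambda_d$ comparable to $|\g|$), both $\co^\rho$ and $c$ lie in $\cal C^\alpha_+$.

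First I would fix a representation $\rho_0$, and using Proposition~\ref{prop:GW1} choose a neighborhood $U$ of $\rho_0$ on which the equivariant maps $\xi_\rho,\eta_\rho$ vary continuously in $\holder^\alpha$ with a common, locally constant Hölder exponent $\alpha$. Next I would verify that $\rho\mapsto \co^\rho$ (and $\rho\mapsto c$) is a continuous map from $U$ into $\cal C^\alpha$: this is the statement that for each generator $\g\in\cal A$, the function $x\mapsto\log(\|\rho(\g)v_x\|/\|v_x\|)$, with $v_x$ spanning $\xi_\rho(x)$, depends continuously in $\|\ \|_\alpha$ on $\rho$. This is a routine computation — it follows from continuity of $\rho(\g)$ in $\g$ and continuity of $\xi_\rho$ in $\holder^\alpha$, using that $\log$ and normalization are smooth away from zero and that the lines $\xi_\rho(x)$ stay in a compact family of lines bounded away from $0$. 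Then I would invoke Proposition~\ref{cor:continuidadentropia}, which asserts that $c\mapsto h_c$ is analytic (in particular continuous) on $\cal C^\alpha_+$; composing with the continuous maps $\rho\mapsto\co^\rho$ and $\rho\mapsto c$ yields continuity of $h_1$ and $h_{1d}$ on $U$.

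One subtlety I would address carefully is that the Hölder exponent $\alpha$ is only \emph{locally} constant, so $\cal C^\alpha$ changes as we move between different pieces of the representation space; but since the statement is local (continuity at each $\rho_0$), working on the neighborhood $U$ where $\alpha$ is constant suffices. A second point is that I must confirm $c\in\cal C^\alpha_+$ throughout $U$, i.e. that $h_c$ stays finite: the periods of $c$ are $(\lambda_1-\lambda_d)(\rho\g)\geq\lambda_1(\rho\g)$, which are positive, and finiteness of $h_c$ follows since $\#\{[\g]:(\lambda_1-\lambda_d)(\rho\g)\le s\}\le \#\{[\g]:\lambda_1(\rho\g)\le s\}$, the latter being finite-growth by Proposition~\ref{prop:growth}. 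I would similarly note $h_1(\rho)=h_{\co^\rho}>0$ and $h_{1d}(\rho)>0$ by Ledrappier's theorem as quoted after the definition of $h_c$.

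The main obstacle is really just the bookkeeping in the second step — showing $\rho\mapsto\co^\rho$ is continuous into $\holder^\alpha(\bord\G)$ uniformly over the finitely many generators, i.e. controlling the Hölder seminorm $K_{\co^\rho(\g,\cdot)-\co^{\rho_0}(\g,\cdot)}$ and not merely the sup norm. This reduces to the estimate that if $\xi_\rho\to\xi_{\rho_0}$ in $\|\ \|_\alpha$ and $g_\rho\to g_{\rho_0}$ in $\GL(V)$, then $x\mapsto \log\|g_\rho\xi_\rho(x)\|$ converges in $\|\ \|_\alpha$ — which is a consequence of the fact that the composition of a fixed smooth function with an $\holder^\alpha$ map, and the product/quotient structure, are continuous operations on $(\holder^\alpha,\|\ \|_\alpha)$, together with a compactness argument placing all the lines $\xi_\rho(x)$ in a uniformly non-degenerate region. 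Once this is in hand, the result follows formally from Proposition~\ref{cor:continuidadentropia}.
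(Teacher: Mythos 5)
Your proposal is correct and follows essentially the same route as the paper: the paper likewise encodes $h_1$ as the growth rate of the cocycle $\co$, forms the dual cocycle $\vco$ with periods $-\lambda_d(\rho\g)$ and the sum $\co+\vco$ with periods $(\lambda_1-\lambda_d)(\rho\g)$, gets finiteness from $\lambda_1-\lambda_d\geq\lambda_1$ together with Proposition \ref{prop:growth}, and concludes by combining Proposition \ref{prop:GW1} with Proposition \ref{cor:continuidadentropia}. One parenthetical slip: the attracting line of $\rho^*(\g)$ in $\P(V^*)$ is $\eta(\g_+)$, not $\eta(\g_-)$ (the latter is the attracting line of $\rho^*(\g^{-1})$, by Lemma \ref{lema:loxodromic} applied to the dual representation); your stated period $\vco(\g,\g_+)=-\lambda_d(\rho\g)$ is nevertheless the correct one, so nothing downstream is affected.
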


\begin{proof}Since the cocycle $\co$ has finite exponential growth rate, proposition \ref{prop:GW1} together with corollary \ref{cor:continuidadentropia} imply directly the continuity of $h_1(\rho).$ 

We focus then on $h_{1d}(\rho).$ The dual representation $\rho^*:\G\to\pgl({\R^d}^*)$ given by $\rho^*(\g)\varphi=\varphi \circ\rho(\g^{-1})$ is also strictly convex. The cocycle associated to $\rho^*,$ $$\vco(\g,x)=\log\frac{\|\rho^*(\g)\varphi\|}{\|\varphi\|}$$ where $\varphi\in\eta(x),$ has periods $$\vco(\g,\g_+)=\lambda_1(\rho\g^{-1})=-\lambda_d(\rho\g).$$ Consider now the H\"older cocycle $\beta_{1d}:\G\times\bord\G\to\R$ defined as $$\beta_{1d}(\g,x)= \co(\g,x)+\vco(\g,x).$$ The periods of $\beta_{1d}$ are $$\beta_{1d}(\g,\g_+)=\lambda_1(\rho\g)+\lambda_1(\rho\g^{-1})=\lambda_1(\rho\g)-\lambda_d(\rho\g)>0$$ for every $\g\in\G.$ Again by proposition \ref{prop:GW1} and corollary \ref{cor:continuidadentropia} it is sufficient to prove that the cocycle $\beta_{1d}$ has finite exponential growth rate, but this is clear from the inequality $$\lambda_1(g)-\lambda_d(g)\geq\lambda_1(g)$$ for every $g\in\pgl(d,\R)$ together with the fact that $h_1(\rho)$ is finite. This finishes the proof.
\end{proof}

\subsubsection*{Convex representations on some flag space}\label{theta}

Strictly convex representations are then used to study Zariski dense representations $:\G\to G$ which have equivariant maps to $G/P$ where $P$ is some parabolic subgroup of $G$:

Consider a real semi-simple algebraic group $G.$ Let $K$ be a maximal compact subgroup of $G$ $\tau$ the Cartan involution on $\frak g$ for wich the set $\fix \tau$ is $K$'s Lie agebra, consider $\frak p=\{v\in\frak g: \tau v=-v\}$ and $\frak a$ a maximal abelian subspace contained in $\frak p.$ 

Let $\E$ be the roots of $\frak a$ on $\frak g,$ $\E^+$ a system of positive roots on $\E$ and $\Pi$ the set of simple roots associated to the choice $\E^+.$ To each subset $\t$ of $\Pi$ one associates a parabolic subgroup $P_\t$ of $G$ whose Lie algebra is, by definition, $$\frak p_\t=\frak a \oplus\bigoplus_{\a\in\E^+}\frak g_\a\oplus \bigoplus_{\a\in \<\Pi-\t\>}\frak g_{-\a}$$ where $\<\t\>$ is the set of positive roots generated by $\t$ and $$\frak g_\a=\{w\in\frak g:[v,w]=\a(v)w\ \forall v\in\frak a\}.$$ Every parabolic subgroup of $G$ is conjugated to a unique $P_\t.$

Set $W$ to be the Weyl group of $\E$ and note $u_0:\frak a\to \frak a$ the biggest element in $W,$ $u_0$ is the unique element in $W$ that sends $\frak a^+$ to $-\frak a^+.$  The \emph{opposition involution} $\ii:\frak a\to\frak a$ is the defined as $\ii:=-u_0.$

Fix from now on $\t\subset\Pi$ a subset of simple roots of $G$ and write $\scr F_\t$ for $G/P_\t.$  We consider also $P_{\ii(\t)},$ the parabolic group associated to $$\ii(\t):=\{\a\circ\ii:\a\in\t\}.$$ The set $\scr F_{\ii(\t)}\times\scr F_\t$ possesses a unique open $G$-orbit, which we will denote $\posgen_\t.$

\begin{defi}We shall say that a representation $\rho:\G\to G$ is $\t$-\emph{convex} if there exist two $\rho$-equivariant H\"older maps $\xi:\bord\G\to\scr F_\t$ and $\eta:\bord\G\to\scr F_{\ii\t}$ such that if $x\neq y$ are distinct points in $\bord\G$ then the pair $(\eta(y),\xi(x))$ belongs to $\posgen_\t.$
\end{defi}

A $\Pi$-convex representation (i.e., when the set $\t$ is the full set of
simple roots and thus the parabolic group $P_\Pi$ is minimal) is called \emph{hyperconvex}. The set $\scr F_\Pi$ is the Furstenberg boundary $\scr F$ of $G$'s symmetric space.

Consider $\{\om_\a\}_{\a\in\Pi}$ the set of fundamental weights of $\Pi.$ 
 
\begin{prop}[Tits\cite{tits}]\label{prop:titss} For each $\alpha\in\Pi$ there
exists a finite dimensional proximal irreducible representation
$\L_\alpha:G\to\pgl(V_\alpha)$ such that the highest weight $\chi_\alpha$ of
$\L_\alpha$ is an integer multiple of the fundamental weight $\om_\alpha.$ 
\end{prop}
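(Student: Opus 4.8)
The plan is to deduce the statement from the classification of finite dimensional irreducible representations by their highest (restricted) weight, the one delicate point being the passage between the abstract weight lattice of the root system $\E$ and the lattice of weights actually realized by representations of the specific group $G$ (rather than its Lie algebra or its simply connected cover) --- this last point is the genuine content of Tits' theorem and I would invoke it rather than reprove it.

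First I would recall that the restricted weights occurring in finite dimensional representations of $G$ form a subset of a lattice $\L_G\subset\frak a^*$ which has finite index in the full weight lattice of $\E$ and contains the lattice generated by the restricted roots $\E$. Hence, for each simple root $\a\in\Pi$, there is a smallest positive integer $n_\a$ with $n_\a\om_\a\in\L_G$; since $\om_\a$ is dominant, $\chi_\a:=n_\a\om_\a$ is a dominant restricted weight realized by $G$, and I would let $\L_\a:G\to\pgl(V_\a)$ be the projectivization of the irreducible representation of $G$ with highest restricted weight $\chi_\a$. Over $\R$ one has to choose $V_\a$ to be an appropriate real irreducible module; this can always be arranged, and $\R$-irreducibility of $\L_\a$ is part of what Tits' construction provides. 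By the theory of highest weight modules the $\chi_\a$-weight space of $V_\a$ is one dimensional.

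Next I would verify proximality. Every restricted weight $\mu\neq\chi_\a$ of $V_\a$ is of the form $\mu=\chi_\a-\sum_{\beta\in\Pi}m_\beta\,\beta$ with $m_\beta\in\N$ not all zero. For $v\in\inter(\frak a^+)$ one has $\beta(v)>0$ for every $\beta\in\Pi$, hence $\chi_\a(v)>\mu(v)$. Combined with the fact that the $\chi_\a$-weight space is a line, this shows that $\L_\a(\exp v)$ has a unique eigenvalue of maximal modulus, namely $e^{\chi_\a(v)}$, whose generalized eigenspace is one dimensional; thus $\L_\a(\exp v)$ is proximal in $\pgl(V_\a)$, so $\L_\a$ is a proximal representation. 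Finally $\chi_\a=n_\a\om_\a$ is by construction an integer multiple of $\om_\a$, which is the last assertion.

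The main obstacle is not the proximality combinatorics, which is routine once the weight structure is understood, but the construction step: one must know that some positive multiple of $\om_\a$ is the highest restricted weight of an irreducible representation of $G$ defined over $\R$ with a one dimensional highest weight space. This is exactly the substance of Tits' theorem on irreducible linear representations of reductive groups; the only work left on our side is to record the proximality property above and, if needed later, to match these ``fundamental'' representations with the maximal parabolic attached to $\a$.
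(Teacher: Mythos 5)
The paper offers no proof of this proposition---it is quoted directly from Tits---and your proposal takes the same route, deferring the substantive point (existence of a real irreducible representation of $G$ whose highest restricted weight is an integer multiple of $\om_\a$ and whose highest weight space is a line) to Tits' theorem, while recording the routine verification that strict dominance of $\chi_\a$ over the other restricted weights on $\inter(\frak a^+)$ gives proximality. The one blemish is your interim assertion that one-dimensionality of the $\chi_\a$-weight space follows from general highest weight theory: for \emph{restricted} weights of a real group this fails in general (the highest restricted weight space is only a module over the centralizer of $\frak a$), and its one-dimensionality for a suitable multiple of $\om_\a$ is exactly the nontrivial content of Tits' result, as your final paragraph correctly acknowledges.
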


Fix some $\t$ and consider some $\a\in\t.$ Consider also $\L_\a:G\to\PGL(V_\a)$ a representation given by Tits's proposition. Since $\L_\a$ is proximal and $\a\in\t,$ one obtains an equivariant mapping $\xi_\a:\scr F_\t\to\P(V_\a).$

The highest weight of the dual representation $\L_\a^*:G\to\P(V_\a^*)$ is $\chi_\a\ii,$ one thus obtains an equivariant mapping $\eta_\a:\scr F_{\ii\t}\to\P(V_\a^*).$ Moreover, the pair $(x,y)\in\posgen_\t$ verifies $$\eta_\a(x)|\xi_a(y)\neq0.$$ One deduces the following remark:

\begin{obs}\label{obs:remark} If $\rho:\G\to G$ is Zariski dense and $\t$-convex then the composition $\L_\a \circ\rho:\G\to\pgl(V_\a)$ (where $\L_\a$ is Tits's representation for $\a\in\t$) is strictly convex.
\end{obs}

Remark \ref{obs:remark} together with corollary \ref{cor:cono} imply the following corollary:

\begin{cor}\label{cor:interior} Let $\rho:\G\to G$ be a Zariski dense $\t$-convex representation. Then the limit cone of $\cone_{\rho(\G)}$ of $\rho(\G)$ does not intersect the walls $\{v\in\frak a: \a(v)=0\}$ for every $\a\in\t\cup\ii\t.$ 

In particular, the limit cone of a Zariski dense hyperconvex representation is contained in the interior of the Weyl chamber $\frak a^+.$
\end{cor}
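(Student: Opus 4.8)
The plan is to deduce Corollary~\ref{cor:interior} from Remark~\ref{obs:remark} and Corollary~\ref{cor:cono} by transporting the information through Tits's representations $\L_\a$. The key point is the standard identity relating the Jordan projection of $g\in G$ to the top eigenvalue of $\L_\a(g)$: if $\chi_\a$ is the highest weight of $\L_\a$ (an integer multiple of the fundamental weight $\om_\a$), then for every $g\in G$ one has $\lambda_1(\L_\a(g))=\chi_\a(\lambda(g))$ and, since $\L_\a$ is proximal, $\lambda_1(\L_\a(g))-\lambda_2(\L_\a(g))=\a(\lambda(g))$ up to a positive constant (this is exactly how $\a\in\t$ enters: the second weight of $\L_\a$ is $\chi_\a-\a$, up to the integer multiple). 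So first I would recall these two identities, noting that they are part of Tits's proposition and are used implicitly when one builds the equivariant maps $\xi_\a,\eta_\a$.

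Next, fix $\a\in\t$. By Remark~\ref{obs:remark}, $\L_\a\circ\rho:\G\to\pgl(V_\a)$ is strictly convex, so Corollary~\ref{cor:cono} gives a constant $k>0$ with $\big(\lambda_1(\L_\a\rho\g)-\lambda_2(\L_\a\rho\g)\big)/\lambda_1(\L_\a\rho\g)>k$ for every $\g\in\G$. Translating via the identities above, this reads, up to positive constants absorbed into $k$, $\a(\lambda(\rho\g))>k'\,\chi_\a(\lambda(\rho\g))$ for every $\g$, hence $\a(\lambda(\rho\g))>0$ for every $\g$ (here one uses that $\chi_\a$, being a positive multiple of a fundamental weight, is nonnegative on $\frak a^+$ and that $\lambda(\rho\g)\in\frak a^+$; positivity of $\a(\lambda(\rho\g))$ then follows once we know $\lambda(\rho\g)\neq 0$, which holds since $\rho$ is Zariski dense and in particular $\rho\g$ is not contained in a compact subgroup). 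Since by Theorem~\ref{teo:benoistlimite} the limit cone $\cone_{\rho(\G)}$ is the closed cone generated by $\{\lambda(\rho\g):\g\in\G\}$, and the bound $\a(\lambda(\rho\g))\geq k'\chi_\a(\lambda(\rho\g))$ is scale-invariant and passes to limits of $t_n\lambda(\rho\g_n)$, I conclude that $\a(v)\geq k'\chi_\a(v)>0$ on the interior of $\cone_{\rho(\G)}$ and $\a(v)\geq 0$ on all of it; in any case $\cone_{\rho(\G)}$ meets the wall $\{v:\a(v)=0\}$ only possibly at $0$. Running the same argument with the dual representation $\L_\a^*$, whose highest weight is $\chi_\a\circ\ii$, handles the roots in $\ii\t$ (equivalently, one may apply the first part to the representation $\rho$ composed with $\L_\a^*$, which is again strictly convex by Remark~\ref{obs:remark} applied to $\eta$ in place of $\xi$). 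This proves the first assertion.

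For the ``in particular'': when $\rho$ is hyperconvex, $\t=\Pi$ is the full set of simple roots, so $\t\cup\ii\t=\Pi$ and the first assertion says $\cone_{\rho(\G)}$ avoids every wall $\{v:\a(v)=0\}$, $\a\in\Pi$; but the interior of the Weyl chamber $\frak a^+$ is precisely the set of $v\in\frak a^+$ with $\a(v)>0$ for all $\a\in\Pi$, so $\cone_{\rho(\G)}\subset\inter(\frak a^+)$.

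The main obstacle I anticipate is bookkeeping the precise relation between $\lambda_1-\lambda_2$ of $\L_\a(g)$ and the simple root $\a$ evaluated on $\lambda(g)$: one must check that the second-highest weight of $\L_\a$ is indeed $\chi_\a-\a$ (this is where proximality together with $\L_\a$ being the Tits representation attached to $\a$ is used), so that Corollary~\ref{cor:cono}'s conclusion about $\L_\a\circ\rho$ genuinely controls the functional $\a$ and not some other weight; a careful statement of Tits's construction, or a citation to it, closes this gap. Everything else is the routine homogeneity-and-closure argument for passing from spectral data to the limit cone.
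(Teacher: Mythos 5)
Your proposal is correct and is essentially the paper's own argument: the paper's proof consists precisely of the observation that $\L_\a\circ\rho$ is strictly convex (Remark \ref{obs:remark}) and the identity $\frac{\a(\lambda(\rho\g))}{\chi_\a(\lambda(\rho\g))}=\frac{\lambda_1(\L_\a\rho\g)-\lambda_2(\L_\a\rho\g)}{\lambda_1(\L_\a\rho\g)}>\kappa_\a$ coming from Corollary \ref{cor:cono}. You merely make explicit what the paper leaves implicit — the weight bookkeeping ($\lambda_1=\chi_\a(\lambda)$, second weight $\chi_\a-\a$), the homogeneity-and-closure passage to the limit cone, and the use of the dual Tits representation for the roots in $\ii\t$ — so the two proofs coincide in substance.
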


\begin{proof} 



As observed before, if $\a\in\t$ the composition $\L_\a\rho:\G\to\pgl(V_\a)$ is strictly convex. Applying corollary \ref{cor:cono} for the representation $\L_\a\rho$ implies the existence of some $\kappa_\a>0$ such that $$\frac {\a(\lambda(\rho\g))}{\chi_\a (\lambda (\rho\g)) }=\frac {\lambda_1 (\L_\a\rho\g) -\lambda_2(\L_a\rho\g)}{\lambda_1 (\L_\a \rho\g) }>\kappa_\a.$$ 
\end{proof}

\subsubsection*{Busemann cocycle}

We shall now focus on hyperconvex representations, i.e. $\rho:\G\to G$ admits a H\"older continuous equivariant map $\z:\bord\G\to\scr F$ such that the pair $(\z(x),z(y))$ belongs to $\posgen$ whenever $x\neq y.$

Given such a representation there is a natural H\"older (vector) cocycle on the boundary of $\G$ that appears for which we need \emph{Buseman's cocycle} on $G$ introduced by Quint\cite{quint1}: The set $\scr F$ is $K$-homogeneous with stabilizer $M,$ where $K$ is a maximal compact subgroup of $G,$ one then defines $\bus:G\times\scr F\to\frak a$ to verify the following equation $$gk=l\exp(\bus(g,kM))n$$ following Iwasawa's decomposition of $G=Ke^{\frak a} N,$ where $N$ is the unipotent radical of $P_\Pi=P.$

The cocycle one naturally associates to a hyperconvex representation is then $\vect^\rho:\G\times\bord\G\to\frak a$ defined as $$\vect^\rho(\g,x)= \bus(\rho(\g), \z(x)).$$ Let $\lambda:G\to\frak a ^+$ be the Jordan projection. If there is no confusion we will omit the superscript of $\vect^\rho=\vect.$

\begin{lema}[\S5 of A.S.\cite{quantitative}]\label{lema:espectro} The periods of $\vect$ are $\vect(\g,\g_+)=\lambda(\rho\g).$
\end{lema}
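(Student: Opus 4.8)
The plan is to reduce the computation of the period $\vect(\g,\g_+)=\bus(\rho(\g),\z(\g_+))$ to a statement about a single group element $g=\rho(\g)\in G$ acting on the Furstenberg boundary, and then to exploit the fact that $\z(\g_+)$ is the attracting fixed point of $g$ on $\scr F$. First I would record that, since $\rho$ is hyperconvex and $\g$ has distinct fixed points $\g_+\neq\g_-$ on $\bord\G$, the pair $(\z(\g_+),\z(\g_-))\in\posgen$, i.e. $\z(\g_+)$ and $\z(\g_-)$ are in general position in $\scr F\times\scr F$. Equivariance gives $\rho(\g)\z(\g_\pm)=\z(\g_\pm)$, so $g=\rho(\g)$ fixes a pair of transverse points of $\scr F$; this forces $g$ to be loxodromic (its Jordan projection lies in the interior of $\frak a^+$, or at least $g$ is conjugate into $\exp(\frak a)M$ times something), with $\z(\g_+)$ its attracting flag and $\z(\g_-)$ the repelling one. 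This is exactly the kind of statement established for the proximal situation in Lemma \ref{lema:loxodromic} applied through Tits's representations, so I would either cite that or argue directly via the $\L_\a$'s.

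Next I would use the cocycle identity for $\bus$: from $\bus(g,\xi)$ being a cocycle over the $G$-action on $\scr F$ and $\xi_0:=\z(\g_+)$ being fixed by $g$, one gets $\bus(g^n,\xi_0)=\sum_{k=0}^{n-1}\bus(g,g^k\xi_0)=n\,\bus(g,\xi_0)$ for all $n$, so $\vect(\g,\g_+)=\bus(g,\xi_0)=\frac1n\bus(g^n,\xi_0)$. The point is now to identify $\lim_{n\to\infty}\frac1n\bus(g^n,\xi_0)$ with the Jordan projection $\lambda(g)$. For this I would conjugate: write $g=h\,\ell\,h^{-1}$ with $\ell=\exp(\lambda(g))m_g$ (elliptic-hyperbolic Jordan form), where $h\cdot(\text{standard attracting flag})=\xi_0$. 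Then $\bus(g^n,\xi_0)=\bus(h,\ell^n h^{-1}\xi_0)+\bus(\ell^n,h^{-1}\xi_0)+\bus(h^{-1},\xi_0)$. The first and third terms are bounded (they involve a fixed group element $h^{\pm1}$, and $\ell^n h^{-1}\xi_0$ ranges in the compact $\scr F$), while the middle term, since $h^{-1}\xi_0$ is the standard attracting flag fixed by $\ell$, equals $n\,\bus(\ell,P)=n\,\lambda(g)$ by the analogous $n$-fold splitting — here one uses that $\bus(\exp(v)m,P)=v$ for $v\in\frak a$, $m\in M$, which is immediate from the Iwasawa normalization $gk=l\exp(\bus(g,kM))n$ defining $\bus$. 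Dividing by $n$ and letting $n\to\infty$ kills the bounded terms and yields $\vect(\g,\g_+)=\lambda(\rho\g)$.

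The main obstacle I anticipate is the bookkeeping in the conjugation step: one must be careful that the attracting flag of $g$ (which is $\z(\g_+)$ by the hyperconvexity + equivariance + Lemma \ref{lema:loxodromic}-type argument) is precisely the one that gets carried to the standard base flag $P\in\scr F$ by the conjugating element, so that the Busemann term along $\ell$ is the full $\frak a$-valued Jordan projection and not some Weyl-translate or a projection to a subspace. Controlling this is exactly where the transversality hypothesis $(\z(\g_+),\z(\g_-))\in\posgen$ is essential: it guarantees $g$ is genuinely loxodromic with a well-defined attracting flag equal to $\z(\g_+)$, rather than merely quasi-loxodromic with fixed flags on a wall. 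Once that identification is pinned down, the remaining estimates (boundedness of $\bus(h^{\pm1},\cdot)$ on the compact $\scr F$, the additivity along $\ell^n$) are routine. Alternatively, and perhaps more cleanly, one can avoid the conjugation entirely by invoking the known relation between $\bus$ and the Cartan/Jordan projections — namely that $\frac1n a(g^n)\to\lambda(g)$ together with $a(g^n)$ and $\bus(g^n,\xi_0)$ differing by a bounded amount when $\xi_0$ is the attracting flag — but spelling this out still requires the same loxodromicity input, so I would present the direct argument above.
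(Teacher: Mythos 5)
Your argument is correct, and it is essentially the argument behind the cited reference (the paper itself offers no proof here, deferring to \S 5 of \cite{quantitative}): transversality of the fixed flags $\z(\g_+),\z(\g_-)$ plus proximality of the Tits representations $\L_\a\circ\rho$ (Lemma \ref{lema:loxodromic}) shows $\rho(\g)$ is purely loxodromic with attracting flag $\z(\g_+)$, and then the Iwasawa cocycle computation gives the period. One small simplification: the limit $n\to\infty$ is unnecessary, since writing $g=h\ell h^{-1}$ with $h^{-1}\z(\g_+)=[P]$ the cocycle identity applied to $hh^{-1}=e$ gives $\bus(h,[P])+\bus(h^{-1},\z(\g_+))=0$ exactly, so $\bus(g,\z(\g_+))=\bus(\ell,[P])=\lambda(\rho\g)$ already at $n=1$.
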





We will consider linear functionals on the dual cone $${\cone_\grupo}^*:= \{\varphi\in\frak a^*:\varphi|\cone_\grupo\geq0\}.$$

\begin{lema}[\S5 of A.S.\cite{quantitative}]\label{lema:finite} Let $\rho:\G\to G$ be a Zariski dense hyperconvex representation and consider some $\varphi$ in the dual cone $\cone_\rho^*,$ then the H\"older cocycle $\varphi\circ\vect:\G\times\bord\G\to\R$ has finite exponential growth rate if and only if $\varphi$ belongs to the interior of $\cone_\rho^*.$
\end{lema}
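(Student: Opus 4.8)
The plan is to characterize finiteness of the exponential growth rate of the $\R$-valued cocycle $\varphi\circ\vect$ by comparing its periods $\varphi(\lambda(\rho\g))$ (Lemma \ref{lema:espectro}) with a genuine length function on $\G$, and then to recognize that this comparison is possible precisely when $\varphi$ is positive on the whole limit cone in a \emph{uniform} way, which is equivalent to $\varphi\in\inter(\cone_\rho^*)$.

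First I would settle the easy direction. If $\varphi\notin\inter(\cone_\rho^*)$, then either $\varphi$ fails to be nonnegative on $\cone_\rho$ (excluded by hypothesis, $\varphi\in\cone_\rho^*$) or $\varphi$ vanishes somewhere on $\cone_\rho\setminus\{0\}$, i.e. $\varphi$ is zero on a boundary ray of the limit cone. By Benoist's theorem (Theorem \ref{teo:benoistlimite}) $\cone_\rho$ is the closed cone generated by the Jordan projections $\lambda(\rho\g)$, so there are sequences $\g_n$ with $\lambda(\rho\g_n)/\|\lambda(\rho\g_n)\|$ converging to such a ray; along these, $\varphi(\lambda(\rho\g_n))/\|\lambda(\rho\g_n)\|\to0$, while Corollary \ref{cor:interior} together with Lemma \ref{lema:lambda2hyp} (the comparison of $\|\lambda(\rho\g)\|$ with word length, obtained as in the proof of Corollary \ref{cor:cono}) gives that $\|\lambda(\rho\g_n)\|$ grows at least linearly in $|\g_n|$. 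Hence one produces, in a fixed direction, infinitely many conjugacy classes with $\varphi(\lambda(\rho\g_n))$ bounded (or growing sublinearly against the exponentially many classes of that length), forcing $h_{\varphi\circ\vect}=\infty$. I would phrase this by exhibiting, for each $s$, exponentially many (in $s$) classes with $\varphi(\lambda(\rho\g))\leq s$.

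For the converse — the substantive direction — assume $\varphi\in\inter(\cone_\rho^*)$. Then $\varphi$ is bounded below by a positive multiple of $\|\cdot\|$ on $\cone_\rho$, hence $\varphi(\lambda(\rho\g))\geq \delta\,\|\lambda(\rho\g)\|$ for some $\delta>0$ and all $\g$. Combining with the linear lower bound $\|\lambda(\rho\g)\|\geq c\,|\g|$ from Corollary \ref{cor:interior}/Lemma \ref{lema:lambda2hyp} we get $\varphi(\lambda(\rho\g))\geq \delta c\,|\g|$, so
$$\#\{[\g]:\varphi(\lambda(\rho\g))\leq s\}\leq \#\{[\g]:|\g|\leq s/(\delta c)\},$$
and the right-hand side grows at most exponentially because $\G$ is a hyperbolic group (finitely generated). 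This gives $h_{\varphi\circ\vect}<\infty$. To be careful one should note $\varphi\circ\vect$ is a H\"older cocycle with positive periods (positivity of periods on the interior of the cone, since $\psi_\rho$ and hence $\lambda$ are positive in the interior, by Corollary \ref{cor:interior}), so the exponential growth rate is well defined and the counting via Ledrappier's Theorem \ref{teo:ledrappier} applies.

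The main obstacle I anticipate is the uniform comparison $\|\lambda(\rho\g)\|\geq c\,|\g|$: one needs that the proof of Corollary \ref{cor:cono}, which produces via Lemma \ref{lema:lemaledrappier} a two-sided comparison between $\lambda_1$ of a Tits representation $\L_\a\circ\rho$ and word length $|\g|$, can be invoked here — this is legitimate since $\L_\a\circ\rho$ is strictly convex (Remark \ref{obs:remark}) and $\chi_\a$ is proportional to a fundamental weight, so $\|\lambda(\rho\g)\|\asymp \max_\a \chi_\a(\lambda(\rho\g)) = \max_\a \lambda_1(\L_\a\rho\g) \asymp |\g|$ using the norm equivalence on $\frak a^*$. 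The other delicate point is making the easy direction quantitatively match the exponential growth of $\G$, i.e. checking that the sublinear growth of $\varphi(\lambda(\rho\g_n))$ along a boundary ray genuinely overwhelms the exponential count — this is where one uses that the limit cone has nonempty interior and that $\psi_\rho>0$ there, so a whole subcone of directions with positive $\psi_\rho$ limits onto the bad ray.
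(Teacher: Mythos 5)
The half you call substantive (interior $\Rightarrow$ finite) is fine and is essentially the intended mechanism: for $\varphi\in\inter(\cone_\rho^*)$ one has $\varphi\geq\delta\|\cdot\|$ on $\cone_\rho$ by compactness, and the two-sided comparison between $\lambda_1(\L_\a\rho\g)$ (equivalently $\|\lambda(\rho\g)\|$, since $\max_\a\chi_\a\asymp\|\cdot\|$ on $\frak a^+$) and $|\g|$ is exactly what is extracted from Proposition \ref{prop:growth} plus Lemma \ref{lema:lemaledrappier} in the proof of Corollary \ref{cor:cono}; note the paper itself does not reprove the lemma, it quotes \S5 of \cite{quantitative}, and in fact for this half you do not even need word length: $\#\{[\g]:\varphi(\lambda(\rho\g))\leq s\}\leq\#\{[\g]:\lambda_1(\L_\a\rho\g)\leq Cs/\delta\}$ and Proposition \ref{prop:growth} already gives a finite rate.

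The genuine gap is in the direction you call easy (boundary of $\cone_\rho^*$ $\Rightarrow$ infinite growth rate). Producing $\g_n$ with $\varphi(\lambda(\rho\g_n))/\|\lambda(\rho\g_n)\|\to0$ does not by itself give exponentially many classes below a level $s$: since $\|\lambda(\rho\g_n)\|\to\infty$, the numbers $\varphi(\lambda(\rho\g_n))$ may still tend to infinity, and a single sequence (or powers of a single element) only yields linearly many classes. The quantity that actually governs $h_\varphi$ is the ratio $\psi_\rho(v)/\varphi(v)$ near the vanishing ray (via Quint's Lemma \ref{lema:algunacosa}, the count below level $s$ in directions near $v$ is of order $\exp\bigl(s\,\psi_\rho(v)/\varphi(v)\bigr)$), and as $v$ approaches the ray where $\varphi$ vanishes, $\psi_\rho(v)$ may tend to $0$ as well; concavity only bounds the ratio below by its value at an interior reference point. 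If, hypothetically, $\psi_\rho\leq C\varphi$ near that ray, the rate would be finite even though $\varphi\in\bord\cone_\rho^*$. Excluding this is precisely the vertical-tangent behaviour of $\psi_\rho$ at $\bord\cone_\rho$, which in this paper is a \emph{consequence} (Corollary \ref{cor:tangente}) of results built on Lemma \ref{lema:finite} itself (through Corollary \ref{cor:positiva2} and Proposition \ref{prop:D}), so your appeal to ``$\psi_\rho>0$ on a subcone limiting onto the bad ray'' is either circular or incomplete. The standard repair is to argue the contrapositive with Lemma \ref{lema:lemaledrappier}: if some period $\varphi(\lambda(\rho\g_0))=0$ then the classes $[\g_0^k]$ already force $h_{\varphi\circ\vect}=\infty$; otherwise the periods are positive, and if $h:=h_{\varphi\circ\vect}<\infty$ then $P(-h\,\varphi(F_\rho))=0$ and the last clause of Ledrappier's lemma gives $\inf_{[\g]}\varphi(\lambda(\rho\g))/p([\g])>0$; since $p([\g])$ is comparable to translation length and $\|\lambda(\rho\g)\|\leq C\,p([\g])$ (subadditivity of $\|a(\cdot)\|$ and Milnor--\v{S}varc), one gets $\varphi\geq\delta\|\cdot\|$ on every $\lambda(\rho\g)$, hence on the closed cone they generate, which is $\cone_\rho$ by Theorem \ref{teo:benoistlimite}; that is exactly $\varphi\in\inter(\cone_\rho^*)$.
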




For a hyperconvex representation $\rho:\G\to G$ consider $F_\rho:T^1\w M\to\frak a$ given by Ledrappier\cite{ledrappier}'s theorem \ref{teo:ledrappier} for the cocycle $\vect:\G\times\bord\G\to\frak a.$ The following corollary is direct consequence of the last lemma and lemma \ref{lema:funcionpositiva}.

\begin{cor}\label{cor:positiva2} Let $\rho:\G\to G$ be a Zariski dense hyperconvex representation and fix some $\varphi$ in the interior of the dual cone ${\cone_\rho}^*.$ Then the function $\varphi(F_\rho):T^1\w M\to\R$ is cohomologous to a positive function, and is not cohomologous to a constant.
\end{cor}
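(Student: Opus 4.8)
The plan is to deduce this corollary by combining Lemma~\ref{lema:finite} with Lemma~\ref{lema:funcionpositiva}, treating $\varphi(F_\rho)$ as the Ledrappier potential attached to a scalar H\"older cocycle. First I would recall that $F_\rho:T^1\w M\to\frak a$ is the $\frak a$-valued H\"older potential produced by Theorem~\ref{teo:ledrappier} applied to the $\frak a$-valued cocycle $\vect:\G\times\bord\G\to\frak a$, so that $\int_{[\g]}F_\rho=\vect(\g,\g_+)=\lambda(\rho\g)$ by Lemma~\ref{lema:espectro}. Composing with a fixed linear functional $\varphi\in\frak a^*$, the function $\varphi(F_\rho):T^1\w M\to\R$ is the Ledrappier potential attached to the scalar H\"older cocycle $\varphi\circ\vect:\G\times\bord\G\to\R$; indeed $\int_{[\g]}\varphi(F_\rho)=\varphi(\lambda(\rho\g))=\varphi\circ\vect(\g,\g_+)$, which is exactly the period $\l_{\varphi\circ\vect}(\g)$. (If one prefers, this also follows since formula~(\ref{eq:formula}) is linear in the cocycle, so $\varphi(F_\rho)=F_{\varphi\circ\vect}$.)

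Next, since $\varphi$ lies in the \emph{interior} of the dual cone $\cono$, Lemma~\ref{lema:finite} applies and tells us that the cocycle $\varphi\circ\vect$ has finite exponential growth rate; moreover its periods $\l_{\varphi\circ\vect}(\g)=\varphi(\lambda(\rho\g))$ are positive because $\lambda(\rho\g)\in\inter(\cone_\rho)$ by Corollary~\ref{cor:interior} while $\varphi$ is strictly positive on $\cone_\rho\setminus\{0\}$ (being interior to the dual cone). Thus $\varphi\circ\vect$ is a H\"older cocycle with positive periods and $h_{\varphi\circ\vect}\in(0,\infty)$, i.e. $\varphi\circ\vect\in\cal C^\alpha_+$. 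Lemma~\ref{lema:funcionpositiva} then applies directly to $c=\varphi\circ\vect$ and yields that $F_c=\varphi(F_\rho)$ is cohomologous to a positive function and is not cohomologous to a constant, which is the assertion.

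The argument is essentially bookkeeping: the only point requiring a sentence of care is the identification $\varphi(F_\rho)=F_{\varphi\circ\vect}$, i.e. that composing the $\frak a$-valued Ledrappier potential with $\varphi$ gives the scalar Ledrappier potential of the composed cocycle. This is transparent from the explicit formula~(\ref{eq:formula}), since the construction of $A$ in~(\ref{eq:formula1}) and then of $F_c$ in~(\ref{eq:formula}) is carried out coordinate-wise once one fixes a basis of $\frak a$ (equivalently, $F_\rho$ is characterized up to Livšic cohomology by its periods, and $\varphi(F_\rho)$ has the correct periods). There is no genuine obstacle; the content of the corollary is entirely contained in the two lemmas cited, and the corollary is stated as a direct consequence precisely for that reason.
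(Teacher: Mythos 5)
Your argument is correct and is exactly the one the paper intends: the corollary is stated there as a direct consequence of Lemma \ref{lema:finite} and Lemma \ref{lema:funcionpositiva}, and you have filled in precisely that deduction (interior of the dual cone $\Rightarrow$ positive periods and finite exponential growth rate of $\varphi\circ\vect$ $\Rightarrow$ apply Lemma \ref{lema:funcionpositiva}). One small caveat: the parenthetical claim that formula (\ref{eq:formula}) is linear in the cocycle is not literally true (the cocycle enters through $\log\sum e^{-c}$), but your fallback identification of $\varphi(F_\rho)$ with $F_{\varphi\circ\vect}$ via equality of periods and Liv\v sic's theorem is the right justification, and the conclusion is cohomology-invariant in any case.
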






\section{The growth indicator is strictly concave}

We shall now consider the \emph{growth indicator function} introduced by Quint\cite{quint2}. Recall that $a:G\to\frak a^+$ is the Cartan projection and fix some norm $\|\ \|$ on $\frak a$ in variant under the Weyl group.

Consider $\grupo$ a discrete Zariski dense subgroup of $G.$ For an open cone $\scr C$ on $\frak a^+$ consider the exponential growth rate $$h_{\scr C}:=\limsup_{s\to\infty}\frac{\#\{g\in\grupo:a(g)\in\scr C\textrm{ and }\|a(g)\|\leq s\}}s.$$One then sets $\psi_\grupo:\frak a^+\to\R$ as $$\psi_\grupo(v):=\|v\|\inf_{\scr C\textrm{ open cone}:v\in\scr C} h_{\scr C}.$$Remark that $\psi_\grupo$ is homogeneous and does not depend on the norm chosen.

\begin{teo}[Quint\cite{quint2}]\label{teo:teoquint} The function $\psi_\grupo$ is concave and upper semicontinuous, positive on $\cone_\grupo$ and strictly positive on its relative interior. The set $\{v\in\frak a:\psi_\grupo(v)>-\infty\}$ coincides with the limit cone $\cone_\grupo.$
\end{teo}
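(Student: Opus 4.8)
The plan is to follow Quint's route, relating $\psi_\grupo$ to the critical exponents of a family of Poincar\'e series indexed by linear functionals, and to Patterson--Sullivan theory on the Furstenberg boundary $\scr F.$ For $\varphi\in\frak a^*$ put $\delta_\varphi:=\limsup_{s\to\infty}\tfrac1s\log\#\{g\in\grupo:\varphi(a(g))\le s\}\in[0,+\infty].$ The easy half is that $\delta_\varphi\le1$ implies $\psi_\grupo\le\varphi$ on $\frak a^+$: for $v\in\frak a^+$, $v\neq0$, and a narrow open cone $\scr C$ about $v/\|v\|$ one has $\varphi(a(g))\ge(\varphi(v/\|v\|)-\eps')\|a(g)\|$ whenever $a(g)\in\scr C$, so convergence of $\sum_{a(g)\in\scr C}e^{-s\varphi(a(g))}$ for all $s>1$ forces $h_{\scr C}\le s(\varphi(v/\|v\|)-\eps')$, and letting $s\downarrow1$ and shrinking $\scr C$ gives $\psi_\grupo(v)\le\varphi(v).$ Hence $\psi_\grupo\le\inf\{\varphi:\delta_\varphi\le1\}$, which, being an infimum of linear functionals, is concave and upper semicontinuous; it therefore suffices to prove the reverse inequality pointwise, i.e.\ to exhibit at each direction $v$ a supporting functional $\varphi$ with $\delta_\varphi\le1$ and $\varphi(v)=\psi_\grupo(v).$

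The heart of the matter --- and the step I expect to be the main obstacle --- is producing these supporting functionals, via a higher-rank Patterson--Sullivan construction. For $v$ in the relative interior of $\cone_\grupo$ one wants $\varphi\in\frak a^*$ with $\delta_\varphi=1$ tangent to $\psi_\grupo$ at $v$, obtained by building, at the critical exponent of the series $\sum_g e^{-s\varphi(a(g))}$, a conformal density $\mu$ on $\scr F$ carried by the limit set of $\grupo$ and whose transformation law is governed by $\varphi\circ\bus$ (the Busemann cocycle). The essential point is a higher-rank Shadow Lemma: the $\mu$-mass of the shadow from $o$ of a ball about $g\cdot o$ is comparable, up to a fixed multiplicative constant, to $e^{-\varphi(a(g))}$; the novelty over rank one is that one must control not only the $\frak a$-component of the Iwasawa decomposition but also the compact $M$-component, uniformly for orbit directions staying inside the interior of $\cone_\grupo.$ Granting this, summing the shadows of the elements with $a(g)\in\scr C$ ($\scr C$ narrow about $v/\|v\|$) and $\|a(g)\|\le s$ and using their bounded overlap gives $\#\{g\in\grupo:a(g)\in\scr C,\ \|a(g)\|\le s\}\gtrsim e^{(\varphi(v/\|v\|)-\eps')s}$, hence $h_{\scr C}\ge\varphi(v/\|v\|)-\eps'$ and, shrinking $\scr C$, $\psi_\grupo(v)\ge\varphi(v).$ With the easy half this gives $\psi_\grupo=\inf\{\varphi:\delta_\varphi\le1\}$ on the relative interior of $\cone_\grupo$, and extending by upper semicontinuity yields concavity and upper semicontinuity on all of $\frak a.$ (Alternatively, superadditivity of $\psi_\grupo$ --- hence concavity, by homogeneity --- can be obtained more directly from the coarse additivity $\|a(gfh)-a(g)-a(h)\|\le C$ of the Cartan projection over a finite correction set $f\in F$; but matching the norms of the two factors so as to land in a prescribed direction requires care.)

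It remains to identify the domain and the sign. If $v\notin\cone_\grupo$ then, $\cone_\grupo$ being closed and, by Benoist's theorem \ref{teo:benoistlimite}, equal to the asymptotic cone of $\{a(g):g\in\grupo\}$, a small enough open cone $\scr C$ about $v$ meets $\{a(g):g\in\grupo\}$ in a bounded --- hence, by properness of the action, finite --- set, so $h_{\scr C}=-\infty$ and $\psi_\grupo(v)=-\infty$; on the other hand $\psi_\grupo(v)\le\|v\|\,h_\grupo^{\|\ \|}<\infty$ on the relative interior of $\cone_\grupo$ and the relative boundary is covered by upper semicontinuity, so $\{\psi_\grupo>-\infty\}=\cone_\grupo.$ Non-negativity on $\cone_\grupo$ holds since every open cone about a point of $\cone_\grupo$ meets $a(\grupo)$ (Benoist's theorem \ref{teo:benoistlimite} again), giving $h_{\scr C}\ge0$ there; and strict positivity on the relative interior follows because the supporting functional $\varphi$ above is nonzero (as $\delta_\varphi=1<\infty$) and lies in the dual cone $\cone_\grupo^*$ (as it dominates $\psi_\grupo\ge0$), hence is strictly positive on the interior of $\cone_\grupo$, whence $\psi_\grupo(v)=\varphi(v)>0$ there --- equivalently, concavity together with $\psi_\grupo\ge0$ and the exponential growth $h_\grupo^{\|\ \|}>0$ propagate positivity to the whole relative interior.
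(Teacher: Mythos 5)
The paper itself gives no proof of this statement --- it is quoted verbatim from Quint \cite{quint2} --- so the comparison can only be with Quint's own argument, and your proposal does not follow it. Your easy half is essentially fine (modulo a slip: to convert convergence of $\sum_{a(g)\in\scr C}e^{-s\varphi(a(g))}$ into a bound on $h_{\scr C}$ you need the upper estimate $\varphi(u)\leq(\varphi(v/\|v\|)+\eps')\|u\|$ on the narrow cone, not the lower estimate you invoke; the conclusion survives). Note also that upper semicontinuity needs none of this machinery: it is immediate from the definition, since any open cone containing $v$ eventually contains any sequence $v_n\to v$, so it should not be made contingent on the hard half.

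The genuine gap is in the hard half, which you rightly flag as the heart but do not close, and which as organized is close to circular. The Patterson--Sullivan construction takes a linear form $\varphi$ as input; to run your argument you must first exhibit, for a prescribed interior direction $v$, some $\varphi$ with $\delta_\varphi\leq1$ and $\varphi(v)=\psi_\grupo(v)$ --- but by duality (exactly the content of lemma \ref{lema:cono}) the existence of such supporting forms at every interior direction is essentially equivalent to the concavity and upper semicontinuity being proved, and you never explain how $\varphi$ is selected. Moreover, even granting a $\varphi$-conformal density and a higher-rank shadow lemma, the step ``summing the shadows \ldots using their bounded overlap gives $\#\{g: a(g)\in\scr C,\ \|a(g)\|\leq s\}\gtrsim e^{(\varphi(v/\|v\|)-\eps')s}$'' goes the wrong way: bounded overlap of shadows gives upper bounds on covered mass, hence upper bounds on counting; a lower bound requires the shadows of precisely the elements with $a(g)\in\scr C$ and $\|a(g)\|$ in an annulus to cover a set of $\mu$-measure bounded below, a directional covering statement not available at this stage and in effect a finer result than the theorem (this is also not how Quint uses his shadow lemma, which serves the converse implication: existence of a $\varphi$-density forces $\varphi\geq\psi_\grupo$). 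Quint's actual proof runs in the opposite order: concavity is established first and directly, from Benoist's coarse additivity $\|a(gfh)-a(g)-a(h)\|\leq C$ over a finite set $F$, via the matching argument you relegate to a parenthesis (compose elements in direction $v_1$ at scale $ts$ with elements in direction $v_2$ at scale $(1-t)s$, and control the fibers of $(g,h)\mapsto gfh$); conformal densities at tangent forms are constructed afterwards and use concavity. Finally, a small point: with the paper's limsup definition of $h_{\scr C}$, a cone meeting $a(\grupo)$ in a finite nonempty set gives $h_{\scr C}=0$, not $-\infty$, so your identification of $\{\psi_\grupo>-\infty\}$ with $\cone_\grupo$ needs the critical-exponent convention (a finite Dirichlet series converges for every exponent) or an extra word.
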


We need the following lemma of Quint\cite{quint2}:

\begin{lema}[Lemma 3.1.3 of Quint\cite{quint2}]\label{lema:algunacosa} Let $\grupo$ be a Zariski dense subgroup of $G$ and consider $\varphi\in\frak a^*.$ If $\varphi(v)>\psi_\grupo(v)$ for every $v\in\frak a-\{0\}$ then the Poincare series $$\sum_{g\in\grupo}e^{-\varphi(a(g))}<\infty.$$ If there exists $v$ such that $\varphi(v)<\psi_\grupo(v)$ then $$\sum_{g\in\grupo}e^{-\varphi(a(g))}=\infty.$$
\end{lema}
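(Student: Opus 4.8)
\emph{Approach.} The plan is to reduce both assertions to elementary geometric--series estimates, keeping track of three ingredients already available: the very definition of $\psi_\grupo(v)$ as $\|v\|$ times the infimum of the cone growth rates $h_{\scr C}$ over open cones $\scr C\ni v$; the upper semi-continuity of $\psi_\grupo$ together with the identity $\{v:\psi_\grupo(v)>-\infty\}=\cone_\grupo$ (Theorem \ref{teo:teoquint}); and the description of $\cone_\grupo$ as the asymptotic cone of $\{a(g):g\in\grupo\}$ (Theorem \ref{teo:benoistlimite}), which guarantees that every open cone meeting $\cone_\grupo-\{0\}$ contains $a(g)$ for infinitely many $g\in\grupo$, while every closed cone disjoint from $\cone_\grupo-\{0\}$ contains $a(g)$ for only finitely many $g$ (using discreteness of $\grupo$). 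Write $S=\{v\in\frak a:\|v\|=1\}$, so that $\cone_\grupo$ cuts out the compact set $S\cap\cone_\grupo$, and recall that for $v_0\in S$ one has $h_{\scr C}\geq\psi_\grupo(v_0)$ for \emph{every} open cone $\scr C\ni v_0$ and, for each $\eps>0$, $h_{\scr C}<\psi_\grupo(v_0)+\eps$ for \emph{some} such cone.

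\emph{Convergence.} Assume $\varphi>\psi_\grupo$ on $\frak a-\{0\}$; the hypothesis is vacuous off $\cone_\grupo$, so the work is on $S\cap\cone_\grupo$. Since $\varphi$ is continuous and $\psi_\grupo$ upper semi-continuous, $\varphi-\psi_\grupo$ is lower semi-continuous and attains a positive minimum $3\eps$ on the compact set $S\cap\cone_\grupo$. For each $v\in S\cap\cone_\grupo$ I would first pick an open cone $\scr C_v\ni v$ with $h_{\scr C_v}<\psi_\grupo(v)+\eps$, and then shrink it, using continuity of $\varphi$ on $S$, so that also $\varphi(w)\geq(\varphi(v)-\eps)\|w\|$ for all $w\in\scr C_v$; since $\varphi(v)-\eps\geq\psi_\grupo(v)+2\eps>h_{\scr C_v}+\eps$ (and $h_{\scr C_v}\geq0$ because $v\in\cone_\grupo$), this forces $\varphi(w)>(h_{\scr C_v}+\eps)\|w\|$ on $\scr C_v$. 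Next, writing $\#\{g:a(g)\in\scr C_v,\ \|a(g)\|\leq s\}\leq C_v e^{(h_{\scr C_v}+\eps/2)s}$ (definition of $h_{\scr C_v}$) and summing over the shells $n\leq\|a(g)\|<n+1$ gives
$$\sum_{g:\,a(g)\in\scr C_v}e^{-\varphi(a(g))}\ \leq\ \sum_{n\geq0}C_v\,e^{(h_{\scr C_v}+\eps/2)(n+1)}\,e^{-(h_{\scr C_v}+\eps)n}\ <\ \infty.$$
Finally I would extract a finite subcover $\scr C_{v_1},\dots,\scr C_{v_k}$ of $S\cap\cone_\grupo$: their union is an open cone containing $\cone_\grupo-\{0\}$, so $\scr C':=\frak a^+-\bigcup_i\scr C_{v_i}$ is a closed cone disjoint from $\cone_\grupo-\{0\}$, hence carries only finitely many $g$, and summing the $k+1$ pieces yields $\sum_{g\in\grupo}e^{-\varphi(a(g))}<\infty$.

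\emph{Divergence.} Suppose $\varphi(v_0)<\psi_\grupo(v_0)$ for some $v_0$, normalized to $\|v_0\|=1$; then $v_0\in\cone_\grupo$. If $\varphi(v_0)<0$, I would choose a small open cone $\scr C\ni v_0$ on which $\varphi\leq0$: then $e^{-\varphi(a(g))}\geq1$ for each of the infinitely many $g$ with $a(g)\in\scr C$, and the Poincar\'e series diverges outright. If $\varphi(v_0)\geq0$, then $0\leq\varphi(v_0)<\psi_\grupo(v_0)\leq h_{\scr C}$ for every open cone $\scr C\ni v_0$; I would pick $\eps>0$ with $\varphi(v_0)+3\eps<\psi_\grupo(v_0)$ and then a small open $\scr C\ni v_0$ with $\varphi(w)\leq(\varphi(v_0)+\eps)\|w\|$ on $\scr C$. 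Along a sequence $s_n\to\infty$ realizing the $\limsup$ one has $\#\{g:a(g)\in\scr C,\ \|a(g)\|\leq s_n\}\geq e^{(h_{\scr C}-\eps)s_n}\geq e^{(\psi_\grupo(v_0)-\eps)s_n}$, hence
$$\sum_{g:\,a(g)\in\scr C,\ \|a(g)\|\leq s_n}e^{-\varphi(a(g))}\ \geq\ e^{-(\varphi(v_0)+\eps)s_n}\,e^{(\psi_\grupo(v_0)-\eps)s_n}\ =\ e^{(\psi_\grupo(v_0)-\varphi(v_0)-2\eps)s_n},$$
which tends to $\infty$ with $n$, so the full Poincar\'e series diverges.

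\emph{Main obstacle.} The only delicate point is the bookkeeping in the convergence half: one must choose each $\scr C_v$ so that the exponential count bound coming from $h_{\scr C_v}$ and the lower bound on $\varphi$ hold \emph{simultaneously} on $\scr C_v$, and then control the directions left out of a finite subcover --- precisely the step where $\cone_\grupo$ being the asymptotic cone of $\{a(g)\}$ is used. The divergence half is softer, its single subtlety being the separation of the trivial case $\varphi(v_0)<0$, for which no growth estimate is needed at all.
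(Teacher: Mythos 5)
The paper does not prove this lemma; it is quoted verbatim from Quint (Lemma 3.1.3 of \cite{quint2}), and your argument is essentially Quint's original one: cover $S\cap\cone_\grupo$ by finitely many open cones on which the counting exponent $h_{\scr C}$ and the linear form $\varphi$ are simultaneously controlled, sum geometric series over shells, and dispose of the complementary closed cone via the asymptotic-cone description of $\cone_\grupo$. The proof is correct, including the two points that need care (the lower semi-continuity of $\varphi-\psi_\grupo$ giving a uniform gap $3\eps$ on the compact $S\cap\cone_\grupo$, and the separate treatment of $\varphi(v_0)<0$ in the divergence half, where the bound $e^{-\varphi(a(g))}\geq e^{-(\varphi(v_0)+\eps)s_n}$ would otherwise fail).
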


Fix a Zariski dense hyperconvex representation $\rho:\G\to G$ and denote $\psi_\rho$ for its growth indicator function. If $\varphi\in\frak a^*$ verifies $\varphi\geq\psi_\rho$ then $$\|\varphi\|\geq\sup \frac{\psi_\rho(v)}{\|v\|}=h_\grupo.$$ One is then interested on the set $$D_\rho:= \{\varphi\in\frak a^*:\varphi\geq\psi_\rho\}.$$ Since $\psi_\rho$ is non negative on the limit cone $\cone_\rho$ the set $D_\rho$ is contained in the dual cone ${\cone_\rho}^*.$ For $\varphi\in{\cone_\rho}^*$ define $$h_\varphi=\lim_{s\to\infty}\frac{ \log\#\{ \g\in\G :\varphi(a(\rho\g))\leq s\}}s.$$Remark that $h_\varphi$ is the critical exponent of the Poincare series $$\sum_{\g\in\G}:e^{-\varphi(a(\rho\g))}.$$ 

Quint's lemma \ref{lema:algunacosa} implies the following characterization of the set $D_\rho.$

\begin{lema}\label{obs:menor1} The interior of the set $D_\rho$ is the set of $\varphi\in{\cone_\rho}^*$ such that $h_\varphi<1$ and its boundary coincides with the set of linear functionals such that $h_\varphi=1.$
\end{lema}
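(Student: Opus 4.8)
The plan is to deduce this directly from Quint's lemma~\ref{lema:algunacosa} together with the definition of $h_\varphi$ as a critical exponent. First I would observe the elementary fact that for $\varphi \in {\cone_\rho}^*$, the condition $h_\varphi < 1$ is equivalent to convergence of the Poincar\'e series $\sum_{\g\in\G} e^{-\varphi(a(\rho\g))}$, while $h_\varphi > 1$ forces divergence (these are just the standard properties of a critical exponent; the borderline case $h_\varphi = 1$ is where convergence is not decided by the exponent alone). Note that the sum over $\g \in \G$ and the sum over $g \in \rho(\G)$ coincide up to the finite kernel, which does not affect the critical exponent, so I may freely apply lemma~\ref{lema:algunacosa} with $\grupo = \rho(\G)$.

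Next I would translate the two implications of lemma~\ref{lema:algunacosa}. If $\varphi$ lies in the interior of $D_\rho$, i.e. $\varphi(v) > \psi_\rho(v)$ for all $v \in \cone_\rho - \{0\}$ (and the inequality is automatic and strict off the limit cone since $\psi_\rho = -\infty$ there, after perturbing slightly), then the Poincar\'e series converges, hence $h_\varphi \le 1$; a further small perturbation shows $h_\varphi < 1$. Conversely, if $h_\varphi < 1$ then for some $\eps > 0$ the functional $(1+\eps)\varphi$ still has $h_{(1+\eps)\varphi} \le 1$, actually one argues that $h_\varphi<1$ gives convergence, so by the contrapositive of the second part of lemma~\ref{lema:algunacosa} one cannot have $\varphi(v) < \psi_\rho(v)$ for any $v$; thus $\varphi \ge \psi_\rho$, i.e. $\varphi \in D_\rho$, and the strict inequality $h_\varphi<1$ pushes $\varphi$ into the interior. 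This establishes that $\inter D_\rho = \{\varphi \in {\cone_\rho}^* : h_\varphi < 1\}$.

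For the boundary statement, I would use that $D_\rho$ is a closed convex set (it is an intersection of closed half-spaces $\{\varphi : \varphi(v) \ge \psi_\rho(v)\}$) with non-empty interior, so $\fr D_\rho = D_\rho \setminus \inter D_\rho$. A point $\varphi \in D_\rho$ has $h_\varphi \ge 1$ automatically: if $h_\varphi$ were $<1$ it would be interior by the previous step — wait, more carefully, I would show $\varphi \in D_\rho \Rightarrow h_\varphi \le 1$ via convergence/divergence: $\varphi \ge \psi_\rho$ means we are not in the divergence regime of lemma~\ref{lema:algunacosa}, so the series is "on the boundary of convergence", giving $h_\varphi \le 1$; combined with $\varphi \notin \inter D_\rho \Rightarrow h_\varphi \ge 1$ we get $h_\varphi = 1$ on the boundary. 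Conversely $h_\varphi = 1$ gives $\varphi \in D_\rho$ (not in the divergence regime) but not in the interior (since interior points have $h_\varphi<1$), hence $\varphi \in \fr D_\rho$.

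The main obstacle I anticipate is the borderline bookkeeping: lemma~\ref{lema:algunacosa} only gives clean conclusions under \emph{strict} inequalities between $\varphi$ and $\psi_\rho$, so I must carefully handle the non-strict case $\varphi \ge \psi_\rho$ with equality attained somewhere, and relate it precisely to $h_\varphi = 1$ rather than $h_\varphi \le 1$. This requires a perturbation argument — pushing $\varphi$ slightly inward or outward along a direction and using continuity/monotonicity of $t \mapsto h_{t\varphi}$ (homogeneity gives $h_{t\varphi} = h_\varphi / t$) — together with the fact, from Quint's theorem~\ref{teo:teoquint}, that $\psi_\rho$ is finite and $\le \|\cdot\| h_\rho$ on the limit cone so that $D_\rho$ genuinely has interior points and the half-space description is exact.
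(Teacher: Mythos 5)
Your overall strategy --- reading $h_\varphi$ as the critical exponent of the Poincar\'e series, translating lemma \ref{lema:algunacosa} into the two inclusions, and using the scaling $\varphi\mapsto t\varphi$ to convert non-strict into strict inequalities --- is exactly the intended one (the paper states the lemma without proof, as a direct consequence of Quint's lemma), and most of your argument goes through. But two of your intermediate assertions are imprecise and together they hide the one genuine difficulty. First, ``$h_\varphi<1$ is equivalent to convergence of the Poincar\'e series'' is false in one direction (convergence only gives $h_\varphi\le1$); you correct for this with the perturbation, so this is cosmetic. Second, ``$\varphi\ge\psi_\rho$ means we are not in the divergence regime, so the series is on the boundary of convergence, giving $h_\varphi\le1$'' is not something lemma \ref{lema:algunacosa} gives you: the lemma is silent when $\varphi\ge\psi_\rho$ with equality attained somewhere. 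The correct route is to apply the convergence half of the lemma to $t\varphi$ for $t>1$, which requires $t\varphi>\psi_\rho$ \emph{strictly} on $\cone_\rho-\{0\}$. Writing $t\varphi-\psi_\rho=(t-1)\varphi+(\varphi-\psi_\rho)$, strictness can only fail at a direction $v_0\in\bord\cone_\rho$ where $\varphi(v_0)=\psi_\rho(v_0)=0$, i.e.\ when $\varphi\in\bord\,{\cone_\rho}^*$. The same degenerate case blocks your upgrade from ``$h_\varphi<1\Rightarrow\varphi\in D_\rho$'' to ``$\varphi\in\inter D_\rho$'': from $t\varphi\ge\psi_\rho$ with $t<1$ you only get $\varphi-\psi_\rho\ge(\tfrac1t-1)\psi_\rho$, which need not be uniformly positive on the unit sphere of $\cone_\rho$ if $\psi_\rho$ vanishes somewhere on $\bord\cone_\rho$.

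The missing ingredient is therefore: for $\varphi\in\bord\,{\cone_\rho}^*$ one cannot have $h_\varphi$ finite, let alone $\le1$. This does not follow from lemma \ref{lema:algunacosa}; it is where hyperconvexity enters, via lemma \ref{lema:finite} (the cocycle $\varphi\circ\vect$ has infinite exponential growth rate exactly when $\varphi\notin\inter{\cone_\rho}^*$) combined with the identification of that growth rate with $h_\varphi$ (corollary \ref{cor:exponente}). Once you know $h_\varphi=\infty$ on $\bord\,{\cone_\rho}^*$, every functional you need to consider lies in $\inter{\cone_\rho}^*$, all the strictness issues above disappear, and your perturbation argument closes up. Without some such input the boundary case is genuinely undecided by Quint's lemma alone, so you should state it explicitly rather than absorb it into ``pushes $\varphi$ into the interior''.
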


We are now interested in showing that the growth indicator function $\psi_\rho$ of $\rho(\G)$ is strictly concave with vertical tangent on the boundary of the limit cone $\cone_\rho.$ 

One (trivial) consequence of theorem C on A.S.\cite{quantitative} is the following corollary:

\begin{cor}\label{cor:exponente} If $\varphi\in\cono$ then the exponential growth rate of the H\"older cocycle $\varphi\circ\vect$ coincides with the exponential growth of $\{a(\rho\g):\g\in\G\}$ i.e. $$ \limsup_{s\to\infty} \frac{\log \#\{[\g] \in[\G]: \varphi(\lambda(\rho\g))\leq s\}}s$$ $$=\limsup_{s\to\infty} \frac{\log\#\{\g \in\G:\varphi(a(\rho\g))\leq s\}}s=h_\varphi$$
\end{cor}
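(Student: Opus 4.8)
The plan is to compare three counting functions: the count of conjugacy classes weighted by the period $\varphi(\lambda(\rho\g))$ of the cocycle $\varphi\circ\vect$, the count of group elements weighted by $\varphi(a(\rho\g))$, and — as an intermediate object — the count of group elements weighted by the cocycle evaluated at a fixed boundary point. First I would recall that by Lemma \ref{lema:espectro} the periods of $\vect$ are exactly $\lambda(\rho\g)$, so $\varphi(\lambda(\rho\g))=\l_{\varphi\circ\vect}(\g)$, and hence the first quantity in the statement is precisely $h_{\varphi\circ\vect}$, the exponential growth rate of the H\"older cocycle $\varphi\circ\vect$; since $\varphi\in\cono$ lies in the interior of the dual cone, Lemma \ref{lema:finite} guarantees this is finite and positive, and Ledrappier's theorem \ref{teo:ledrappier} packages it via the potential $F_\rho$.

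The heart of the matter is relating the \emph{Jordan} projection $\lambda$ (which governs periodic orbits / conjugacy classes) to the \emph{Cartan} projection $a$ (which governs group elements). The standard device here is that for a hyperconvex representation the difference $\|a(\rho\g)-\lambda(\rho\g)\|$, or rather $\varphi(a(\rho\g))-\varphi(\l(\rho\g))$, is controlled: using the equivariant map $\z$ and Anosov-type estimates, one shows $\varphi(a(\rho\g))$ differs from the Busemann-type quantity $\varphi(\vect(\g,\g_+))$ plus $\varphi(\vect(\g^{-1},\g_-))$-style terms by a bounded amount, and the latter is comparable to $2\varphi(\l(\rho\g))$ up to the distance $d(o,\g o)$ contributions which are themselves $O(|\g|)$. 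Concretely I would invoke theorem C of A.S.\cite{quantitative} (as the statement itself suggests — ``one (trivial) consequence of theorem C''): that result already establishes that $\#\{\g\in\G:\varphi(a(\rho\g))\leq s\}$ and $\#\{[\g]\in[\G]:\varphi(\l(\rho\g))\leq s\}$ have the same exponential growth rate. So the proof is essentially an application: check that the hypotheses of theorem C are met, namely that $\varphi\circ\vect$ is a H\"older cocycle with positive periods and finite exponential growth rate — positivity of periods because $\varphi\in\cono$ means $\varphi>0$ on $\cone_\rho\ni\l(\rho\g)$, and finiteness by Lemma \ref{lema:finite}.

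The main obstacle — if one were proving this from scratch rather than citing theorem C — is the passage from group elements to conjugacy classes: a priori $\#\{g:\varphi(a(g))\le s\}$ could be much larger than $\#\{[\g]:\varphi(\l(\rho\g))\le s\}$ because Cartan projections see all of $\G$ while Jordan projections only see the ``loxodromic spectrum.'' The resolution uses that $\G$ is word-hyperbolic so that each group element is a bounded distance (in the appropriate sense) from an axis, together with the orbit-counting equidistribution for the Patterson–Sullivan / Bowen–Margulis machinery on $\G\/T^1\w M$ — this is exactly the content that A.S.\cite{quantitative} extracts. I would therefore present the proof as: (1) identify $h_{\varphi\circ\vect}$ with the first displayed $\limsup$ via Lemma \ref{lema:espectro}; (2) verify $\varphi\circ\vect\in\cal C^\alpha_+$ with finite growth using Lemmas \ref{lema:finite} and \ref{lema:funcionpositiva}; (3) quote theorem C of \cite{quantitative} to get the equality with $h_\varphi$; and note that the limits exist (not merely $\limsup$) because, via Lemma \ref{lema:lemaledrappier} applied to the potential $F_\rho$ composed with $\varphi$, one has $P(-h_{\varphi\circ\vect}\,\varphi(F_\rho))=0$, which upgrades $\limsup$ to $\lim$.
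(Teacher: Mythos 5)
Your proposal takes essentially the same route as the paper, which in fact gives no argument at all beyond declaring the corollary a ``(trivial) consequence of theorem C'' of A.S.\cite{quantitative} --- exactly the citation you make, with the hypotheses checked via Lemma \ref{lema:espectro} (periods equal $\lambda(\rho\g)$) and Lemma \ref{lema:finite} (finiteness). The only minor slip is that $\cono$ denotes the full dual cone rather than its interior, and finiteness from Lemma \ref{lema:finite} needs the interior, but this is the same implicit reading the paper itself uses.
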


This corollary allows us to link the growth indicator function with the thermodynamic formalism on the geodesic flow on $\G\/ T^1\w M.$ We will give a description of the set $D_\rho$ by considering the pressure function of potentials on $\G\/T^1\w M.$ Fix from now on a $\G$-invariant function $F_\rho:T^1\w M\to\frak a$ given by Ledrappier\cite{ledrappier}'s theorem \ref{teo:ledrappier} for the vector cocycle $\vect:\G\times\bord\G\to\frak a.$

\begin{prop}\label{prop:D} Let $\rho:\G\to G$ be a Zariski dense hyperconvex representation. Then $D_\rho=\{\varphi\in\frak a^*:P(-\varphi\circ F_\rho)\leq0\}.$ The interior of $D_\rho$ is then the set $$\{\varphi\in\frak a^*:P(-\varphi\circ F_\rho)<0\}.$$
\end{prop}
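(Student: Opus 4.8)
The plan is to identify the two a priori different descriptions of $D_\rho$ — the geometric one, $\{\varphi:\varphi\geq\psi_\rho\}$, and the dynamical one, $\{\varphi:P(-\varphi\circ F_\rho)\leq 0\}$ — by passing through the single invariant that both sides compute: the critical exponent $h_\varphi$. I would first reduce to $\varphi$ in the dual cone ${\cone_\rho}^*$: if $\varphi\notin{\cone_\rho}^*$ then $\varphi$ is negative somewhere on $\cone_\rho$, hence (since $\psi_\rho\geq0$ on $\cone_\rho$) $\varphi\not\geq\psi_\rho$, so $\varphi\notin D_\rho$; on the other hand $-\varphi\circ F_\rho$ then has some period $-\varphi(\lambda(\rho\g))>0$ by Lemma \ref{lema:espectro}, and using the positivity of $F_\rho$ in the remaining directions one checks $P(-\varphi\circ F_\rho)>0$, so $\varphi$ is excluded from the dynamical set as well. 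Thus both sets lie in ${\cone_\rho}^*$ and it suffices to compare them there.

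For $\varphi$ in the \emph{interior} of ${\cone_\rho}^*$, Lemma \ref{lema:finite} gives that the H\"older cocycle $\varphi\circ\vect$ has finite exponential growth rate, and by Lemma \ref{lema:espectro} its periods are $\varphi(\lambda(\rho\g))>0$; its associated Ledrappier potential is exactly $\varphi\circ F_\rho$ (by linearity of $F\mapsto F_\varphi$, or directly from Theorem \ref{teo:ledrappier}). Ledrappier's Lemma \ref{lema:lemaledrappier} applied to the potential $\varphi\circ F_\rho$ then says: the number
$$
h:=\limsup_{s\to\infty}\frac{\log\#\{[\g]\in[\G]:\varphi(\lambda(\rho\g))\leq s\}}{s}
$$
lies in $(0,\infty)$ and $P(-h\cdot\varphi\circ F_\rho)=0$; equivalently, since $t\mapsto P(-t\,\varphi\circ F_\rho)$ is strictly decreasing (Proposition \ref{prop:pression}, as $\varphi\circ F_\rho$ is cohomologous to a positive function by Corollary \ref{cor:positiva2}), $P(-\varphi\circ F_\rho)\leq 0\iff h\leq 1$, and strict inequality corresponds to strict inequality. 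By Corollary \ref{cor:exponente} this $h$ equals $h_\varphi$, the critical exponent of the Poincar\'e series $\sum_\g e^{-\varphi(a(\rho\g))}$. Finally Lemma \ref{obs:menor1} (a consequence of Quint's Lemma \ref{lema:algunacosa}) identifies $\{\varphi\in{\cone_\rho}^*: h_\varphi<1\}$ with the interior of $D_\rho$ and $\{h_\varphi=1\}$ with its boundary. Chaining these equivalences gives, on the interior of ${\cone_\rho}^*$,
$$
\varphi\in\inter(D_\rho)\iff h_\varphi<1\iff P(-\varphi\circ F_\rho)<0,
$$
and likewise $\varphi\in D_\rho\iff P(-\varphi\circ F_\rho)\leq 0$.

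It remains to handle $\varphi$ on the boundary of the dual cone ${\cone_\rho}^*$. Here $h_\varphi=\infty$ by Lemma \ref{lema:finite}, so $\varphi\notin\inter(D_\rho)$; one must check whether such $\varphi$ can lie on $\partial D_\rho$, i.e.\ whether $P(-\varphi\circ F_\rho)=0$ is possible. I expect this to be the main technical point: $F_\rho$ takes values in $\frak a$ but along a boundary functional $\varphi\circ F_\rho$ need no longer be cohomologous to a strictly positive function (only non-negative), and one argues that the infinitude of $h_\varphi$ forces $P(-\varphi\circ F_\rho)>0$ — intuitively, because periodic orbits with $\varphi(\lambda(\rho\g))$ bounded accumulate faster than any exponential rate, so no finite pressure normalization can make the pressure vanish. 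Concretely I would argue by monotonicity and continuity of $t\mapsto P(-t\,\varphi\circ F_\rho)$ together with the fact that $P(-t\,\varphi\circ F_\rho)\geq h_{\textrm{top}}$-type lower bounds coming from orbits on which $\varphi\circ F_\rho$ is small, contradicting $\leq 0$. Once this is settled, $D_\rho$ and $\{\varphi:P(-\varphi\circ F_\rho)\leq 0\}$ coincide on all of $\frak a^*$, and since by the interior computation they share the same interior, the closed sets agree, which completes the proof.
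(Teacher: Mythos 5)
Your treatment of the interior of the dual cone is correct and is essentially the paper's mechanism: positivity of the periods, Corollary \ref{cor:positiva2} plus Proposition \ref{prop:pression} to get strict monotonicity of $t\mapsto P(-t\,\varphi(F_\rho))$, Ledrappier's Lemma \ref{lema:lemaledrappier} to locate the zero of the pressure at $t=h_\varphi$, Corollary \ref{cor:exponente} to identify this exponent with the critical exponent of the Poincar\'e series, and Lemma \ref{obs:menor1} to translate $h_\varphi<1$ (resp. $=1$) into membership in $\inter(D_\rho)$ (resp. $\bord D_\rho$). The case $\varphi\notin{\cone_\rho}^*$ is also fine (and simpler than you suggest: a periodic orbit with $\varphi(\lambda(\rho\g))<0$ gives an invariant probability with $h\geq 0$ and $\int-\varphi(F_\rho)\,dm>0$, hence $P>0$ directly).

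The genuine gap is exactly the point you flag yourself: the boundary of ${\cone_\rho}^*$. What you offer there is a heuristic, and as stated it does not work: individual periodic orbits carry zero entropy, so ``many orbits on which $\varphi\circ F_\rho$ is small'' does not by itself produce a lower bound $P(-\varphi(F_\rho))>0$; converting a super-exponential orbit count into a pressure estimate is precisely the content of Ledrappier's lemma, not something you can wave at. The case can in fact be closed with the tools you already invoke: if $\varphi\in\bord({\cone_\rho}^*)$ had $P(-\varphi(F_\rho))\leq 0$, then evaluating the pressure on periodic-orbit measures gives $\int_{[\g]}\varphi(F_\rho)\geq 0$ for all $\g$, and either your monotonicity argument (if $P<0$) or the converse direction of Lemma \ref{lema:lemaledrappier} with $s_0=1$ (if $P=0$) yields $h_\varphi\leq 1<\infty$, contradicting Lemma \ref{lema:finite}, which says $h_\varphi=\infty$ off the interior of the dual cone. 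Note also that the paper's own proof never needs this case analysis: its converse direction takes an \emph{arbitrary} $\varphi\in\frak a^*$ with $P(-\varphi(F_\rho))<0$, extracts positivity of all periods straight from the definition of the pressure, and concludes $\varphi\in\inter(D_\rho)$; this gives $\inter(D_\rho)=\{P<0\}$ outright, and the equality of the closed sets then follows because both $D_\rho$ and $\varphi\mapsto \vo P(\varphi)=P(-\varphi(F_\rho))$ are convex (so each closed set is the closure of the common interior). Restructuring your argument along either of these lines would make it complete.
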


\begin{proof} Recall that, after corollary \ref{cor:exponente}, for every $\varphi\in\cono$ one has $$h_\varphi=\limsup_{s\to\infty}\frac{\log\#\{\g\in\G:\varphi(a(\rho\g))\leq s\}}s$$ $$=\limsup_{s\to\infty}\frac{\log\#\{[\g]\in[\G]:\varphi(\lambda(\rho\g))\leq s\}}s.$$ Recall also that after lemma \ref{obs:menor1} the interior of $D_\rho$ is the set of linear functionals $\varphi\in\inter\cono$ with $0<h_\varphi<1.$

Consider then a linear functional $\varphi$ in the interior of $D_\rho,$ this
is, $\varphi(v)>\psi_\rho(v)\ \forall v\in\frak a-\{0\}.$ We want to show that
$P(-\varphi\circ F_\rho)<0.$

Quint\cite{quint2}'s theorem \ref{teo:teoquint} states that $\psi_\rho$ is positive on the interior of the limit cone and thus $\varphi$ belongs to the interior of the
dual cone $\cono,$ this is $\varphi|\cone_\rho-\{0\}>0.$ Moreover one has
$h_\varphi<1.$ 

Corollary \ref{cor:positiva2} implies that $\varphi\circ F_\rho$ is cohomologous to a strictly positive function and is not cohomologous to a constant, proposition \ref{prop:pression} then implies that $t\mapsto P(-t\varphi(F_\rho))$ has strictly negative derivative. Thus $$t\mapsto P(-t\varphi(F_\rho))$$ is strictly decreasing. Ledrappier\cite{ledrappier}'s lemma \ref{lema:lemaledrappier} implies then $P(-h_\varphi\varphi(F_\rho))=0.$ One then finds that $P(-\varphi( F_\rho))<0$ (see figure 1).

\begin{figure}[h]
\begin{center}
\includegraphics{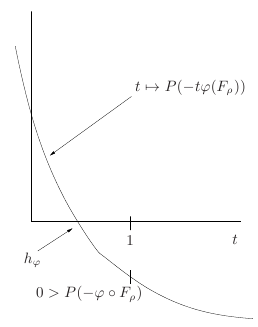}
\caption{The function $t\mapsto P(-t\varphi(F_\rho))$ when $\varphi\circ F_\rho$ is
cohomologous to a positive function and not cohomologous to a constant.} 
\end{center}
\end{figure}

Conversely, fix a linear functional $\varphi\in\frak a^*$ such that $P(-\varphi( F_\rho))<0.$ Considering again the function $t\mapsto P(-t\varphi(F_\rho))$ one finds that, since $P(0)=h_{\tope}(\phi_t)>0$ and $P(-\varphi(F_\rho))<0,$ there exists some $0<h<1$ with $P(-h\varphi(F_\rho))=0.$

By definition of pressure one has $$P(-\varphi(F_\rho))=\sup_{m\in\cal M^{\phi_t}}h(m,\phi_t)-\int \varphi(F_\rho)dm<0,$$ which implies that for every $\g\in\G$ $$\int_{[\g]}\varphi(F_\rho)>0,$$ this is $\varphi\circ \vect$ has positive periods. we can thus apply Ledrappier\cite{ledrappier}'s lemma \ref{lema:lemaledrappier} and conclude that  that such $h$ is necessarily $h_\varphi$ and thus $\varphi$ belongs to the interior of $D_\rho.$
\end{proof}

We will now deduce properties for $\psi_\rho$ from properties of the pressure function. We need Benoist\cite{benoist2}'s theorem below: 

\begin{teo}[Benoist\cite{benoist2}]\label{teo:densos} Consider $\grupo$ a Zariski dense subgroup of $G,$ then the group generated by $\{\lambda(g):g\in \grupo\}$ is dense in $\frak a.$
\end{teo}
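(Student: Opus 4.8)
The plan is to follow Benoist's approach, whose engine is the abundance of loxodromic ($\mathbb R$-regular) elements in a Zariski dense subgroup together with a precise asymptotic expansion for the Jordan projection of a product of two of them in general position. I would first reduce to a non-integrality statement. Write $\cal S=\langle\lambda(g):g\in\grupo\rangle\subseteq\frak a$ for the group generated by the Jordan projections and let $\overline{\cal S}$ be its closure, a closed subgroup of $(\frak a,+)$. By Theorem \ref{teo:benoistlimite} the limit cone $\cone_\grupo$ has non-empty interior, so the $\mathbb R$-span of $\{\lambda(g):g\in\grupo\}$ is all of $\frak a$ and $\overline{\cal S}$ lies in no hyperplane; hence, were $\overline{\cal S}\neq\frak a$, there would be $\varphi\in\frak a^*\smallsetminus\{0\}$ with $\varphi(\lambda(g))\in\mathbb Z$ for every $g\in\grupo$ (indeed $\overline{\cal S}$ would be of the form $V\oplus L$ with $L$ a non-trivial lattice in a complement, and a $\varphi$ vanishing on $V$ and integral on $L$ does the job; a $\varphi$ vanishing on all of $\cal S$ is impossible since it would force $\cone_\grupo\subseteq\ker\varphi$). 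So it suffices to rule out such a $\varphi$.

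Since $\grupo$ is Zariski dense it contains many loxodromic elements, and the attracting/repelling flags $g_\pm\in\scr F$ of loxodromic elements of $\grupo$ form a dense subset of $\Lambda_\grupo\times\Lambda_\grupo$, where the limit set $\Lambda_\grupo\subseteq\scr F$ is perfect --- in particular uncountable --- and Zariski dense in $\scr F$. The analytic heart, and the step I expect to be the main obstacle, is Benoist's product estimate: for $g,h\in\grupo$ loxodromic and in general position (the attracting flag of each avoiding the repelling wall of the other), $g^nh$ is loxodromic for $n$ large and
\[
\lambda(g^nh)=n\,\lambda(g)+\lambda(h)+\beta(g_+,g_-,h_+,h_-)+o(1)\qquad(n\to\infty),
\]
the correction $\beta(\cdot)\in\frak a$ being a real-analytic function of the four flags --- an $\frak a$-valued cross-ratio assembled from matrix coefficients of Tits's representations $\L_\alpha$ --- the point being that the error genuinely tends to $0$, not merely stays bounded. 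Granting this, since $g^nh\in\grupo$, $\lambda(g^n)=n\lambda(g)$, and each of $\varphi(\lambda(g^nh))$, $n\varphi(\lambda(g))$, $\varphi(\lambda(h))$ lies in $\mathbb Z$, letting $n\to\infty$ forces $\varphi(\beta(g_\pm,h_\pm))\in\mathbb Z$ for every loxodromic $h\in\grupo$ in general position with $g$.

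It then remains to extract a contradiction by varying $h$. As $h$ runs over the loxodromic elements of $\grupo$ the pair $(h_+,h_-)$ runs over a dense subset of $\Lambda_\grupo\times\Lambda_\grupo$, so continuity of $\beta$ gives $\varphi(\beta(g_\pm,\eta_+,\eta_-))\in\mathbb Z$ for all $(\eta_+,\eta_-)\in\Lambda_\grupo\times\Lambda_\grupo$ in general position with $g$. Choosing a suitable loxodromic $g$ and a suitable $\eta_-$, the map $\eta_+\mapsto\varphi(\beta(g_\pm,\eta_+,\eta_-))$ is a non-constant real-analytic function on $\scr F$ (non-degeneracy of the cross-ratio), and since $\Lambda_\grupo$ is Zariski dense and minimal under $\grupo$ a Baire-category argument on the compact set $\Lambda_\grupo$ prevents this function from being constrained to countably many values there; but $\Lambda_\grupo$ being perfect, its image under a continuous non-constant map is uncountable --- contradicting inclusion in $\mathbb Z$. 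Hence $\overline{\cal S}=\frak a$. The two substantive difficulties are thus (i) the asymptotic product formula with convergent error and its identification with a non-degenerate cross-ratio, which is the content of Benoist's study of the asymptotic geometry of linear groups, and (ii) the closing rigidity step, which leans on the Zariski density and minimality of the limit set in the flag variety; everything else is formal.
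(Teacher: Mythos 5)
This theorem is not proved in the paper at all --- it is imported verbatim from Benoist, so your sketch can only be measured against the strategy of \cite{benoist2}, whose general architecture (reduce to a functional $\varphi$ with $\varphi(\lambda(\grupo))\subseteq\Z$, feed it an asymptotic expansion of Jordan projections of products of loxodromic elements in general position, and contradict integrality by letting the flags vary over the limit set) you have reproduced correctly in outline. Two points, however. First, a fixable misstatement: for fixed $h$ the limit of $\lambda(g^nh)-n\lambda(g)$ does exist, but it equals (weight by weight) $\log|\tr(\pi_g h)|$, which depends on all of $h$, not only on $\lambda(h)$ and the flags $h_\pm$; so the correction term in your formula is not a function $\beta(g_+,g_-,h_+,h_-)$ of the four flags. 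The statement you actually need is the double-limit version, e.g.\ $\lambda(g^nh^m)-n\lambda(g)-m\lambda(h)\to\beta(g_\pm,h_\pm)$, where the limit is genuinely the $\frak a$-valued cross-ratio; with that substitution your integrality conclusion $\varphi(\beta(g_\pm,h_\pm))\in\Z$ still follows.

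The genuine gap is the last step. From ``$\eta_+\mapsto\varphi(\beta(g_\pm,\eta_+,\eta_-))$ is continuous, non-constant, integer-valued on the perfect set $\Lambda_\grupo$'' you conclude that its image is uncountable; that is false unless the domain is connected. A continuous non-constant function on a compact perfect totally disconnected set can take finitely many integer values (it can be locally constant), and limit sets of Zariski dense discrete subgroups are very often Cantor sets (Schottky subgroups, Anosov representations of free groups), so neither perfectness nor the vague Baire-category appeal closes this. Non-constancy of the cross-ratio as an analytic function on $\scr F$ only helps if you can propagate integrality from $\Lambda_\grupo\times\Lambda_\grupo$ to a connected (e.g.\ open) set, and Zariski density of $\Lambda_\grupo$ does not do that by itself: an analytic or algebraic function can be integer-valued on a Zariski dense subset without being constant. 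This is precisely where the real content of Benoist's proof lies --- he exploits Zariski density through algebraic identities/configurations in the connected group $G$ (showing a suitable $\R/\Z$-valued algebraic map must be constant, or that the attained cross-ratio values are too rich to sit in a discrete subgroup), not through the point-set topology of the limit set. As written, your argument would ``prove'' too much: it would apply equally to situations where an integer-valued, non-constant, continuous function on a Cantor set is perfectly possible. You need to replace the final paragraph with an argument of that algebraic nature.
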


\begin{prop}\label{prop:algo} Let $\rho:\G\to G$ be a Zariski dense hyperconvex representation, then the set $D_\rho$ is strictly convex and its boundary is an analytic sub manifold of $\frak a^*.$
\end{prop}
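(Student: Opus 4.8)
The idea is to exploit the description $D_\rho=\{\varphi\in\frak a^*:P(-\varphi\circ F_\rho)\leq 0\}$ from Proposition \ref{prop:D}, and to run the thermodynamic machinery: the pressure function $\varphi\mapsto P(-\varphi\circ F_\rho)$ is analytic and convex on $\frak a^*$, being the composition of the analytic convex pressure functional $P:\holder^\a(\G\/T^1\w M)\to\R$ with the linear map $\varphi\mapsto -\varphi\circ F_\rho$; cf.\ Proposition \ref{prop:pression} and Lemma \ref{lema:entropia}. Hence $D_\rho$ is a convex set, and its boundary $\partial D_\rho=\{\varphi:P(-\varphi\circ F_\rho)=0\}$ is (a piece of) a level set of an analytic function. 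The point is to upgrade ``convex'' to ``strictly convex'' and ``level set'' to ``analytic submanifold'', and for both one needs the Hessian of the pressure to be non-degenerate.

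First I would establish that $\varphi\mapsto P(-\varphi\circ F_\rho)$ is \emph{strictly} convex on the interior of $\cono$. Fix $\varphi_0$ in the interior of the dual cone and a nonzero direction $\psi\in\frak a^*$; I want the second derivative of $t\mapsto P(-(\varphi_0+t\psi)\circ F_\rho)$ at an appropriate point to be positive. By Proposition \ref{prop:pression}, this second derivative vanishes only if $\psi\circ F_\rho$ is Liv\v sic cohomologous to a constant. So the core claim is: \emph{if $\psi\in\frak a^*$ is nonzero, then $\psi\circ F_\rho$ is not cohomologous to a constant.} Here is where Benoist's Theorem \ref{teo:densos} enters: the periods of the cocycle $\vect$ are $\vect(\g,\g_+)=\lambda(\rho\g)$ (Lemma \ref{lema:espectro}), so the periods of $\psi\circ\vect$ are $\psi(\lambda(\rho\g))$; since $\{\lambda(\rho\g):\g\in\G\}$ generates a dense subgroup of $\frak a$ and $\psi\neq0$, the set of periods $\{\psi(\lambda(\rho\g))\}$ is not contained in any coset of a discrete subgroup of $\R$, and in particular the periods are not all equal. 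By Liv\v sic's theorem a H\"older function on $\G\/T^1\w M$ cohomologous to a constant $c$ has all periods equal to $c\cdot p(\tau)$ — more precisely $\int_{[\g]}\psi(F_\rho)=c\,p(\g)$ — so if that held we could combine it with a second functional (or with the topological-entropy normalization) to force all ratios $\psi(\lambda(\rho\g))/p(\g)$ to be equal, contradicting density. I should be a little careful to phrase the ``not cohomologous to a constant'' conclusion correctly: it means there is no pair $(c,U)$, and the cleanest route is that cohomology to $c$ would make $\psi(F_\rho)-c$ cohomologous to zero, whence all periods $\psi(\lambda(\rho\g))-c\,p(\g)=0$; but then $\psi(\lambda(\rho\g))$ would be a fixed multiple of $p(\g)$ for all $\g$, and since the $p(\g)$ span a dense subgroup of $\R$ while the $\lambda(\rho\g)$ span a dense subgroup of $\frak a$ of full dimension, comparing with another independent functional $\psi'$ yields a linear relation on all of $\frak a$, contradiction. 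This gives non-degeneracy of the Hessian of $P(-\varphi\circ F_\rho)$ in every direction at every $\varphi_0$ in the interior of $\cono$, hence strict convexity of the pressure function there, hence strict convexity of the boundary $\partial D_\rho$ of the convex set $D_\rho$ (note $D_\rho\subset\cono$ and its boundary lies in the interior of $\cono$, by Lemma \ref{lema:finite} and Lemma \ref{obs:menor1}, since on $\partial\cono$ the cocycle $\varphi\circ\vect$ has infinite growth rate so $P(-\varphi\circ F_\rho)$ cannot be $0$).

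Second, for the ``analytic submanifold'' statement I would invoke the analytic implicit function theorem applied to $G(\varphi):=P(-\varphi\circ F_\rho)$ on the open set where it is finite and analytic (Lemma \ref{lema:entropia} / Proposition \ref{prop:pression}): at a point $\varphi_0\in\partial D_\rho$ the gradient $dG_{\varphi_0}$ is the linear functional $\psi\mapsto -\int\psi(F_\rho)\,dm_{\varphi_0}$, and by the non-degeneracy just established — or more simply because $m_{\varphi_0}$ is a probability and $F_\rho$ is not cohomologous to a constant vector (apply the previous paragraph to each coordinate / to any nonzero $\psi$), so $\int F_\rho\,dm_{\varphi_0}\neq 0$ — the differential $dG_{\varphi_0}$ is nonzero. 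Therefore $\{G=0\}$ is an analytic hypersurface near $\varphi_0$, and this holds at every boundary point, so $\partial D_\rho$ is an analytic submanifold of $\frak a^*$.

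The main obstacle is the non-degeneracy input, i.e.\ proving that $\psi\circ F_\rho$ is not cohomologous to a constant for every nonzero $\psi\in\frak a^*$; everything else is a packaging of Propositions \ref{prop:D} and \ref{prop:pression} with the implicit function theorem. The key leverage for that obstacle is Benoist's density theorem \ref{teo:densos} together with Lemma \ref{lema:espectro} identifying the periods of $\vect$ with the Jordan projection $\lambda(\rho\g)$ — this is precisely the point where Zariski density of $\rho(\G)$ is used. One subtlety to handle carefully: cohomology of $\psi(F_\rho)$ to a constant $c$ imposes $\psi(\lambda(\rho\g)) = c\, p(\g)$ for all $\g$, and one must rule this out for $\psi\neq 0$; combining two linearly independent functionals $\psi_1,\psi_2$ with constants $c_1,c_2$ would give $c_2\psi_1(\lambda(\rho\g)) = c_1\psi_2(\lambda(\rho\g))$ for all $\g$, i.e.\ a nontrivial linear relation satisfied on the dense subgroup generated by $\{\lambda(\rho\g)\}$, hence on all of $\frak a$ — impossible as $\dim\frak a\geq 2$ in the interesting cases, and in the rank-one case the statement is anyway classical. (If $\dim\frak a = 1$ the proposition's content about strict convexity of a subset of a line is vacuous, so no case is lost.)
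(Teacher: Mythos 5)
Your reduction to Proposition \ref{prop:D} and the implicit function theorem is the right framework, but the ``core claim'' on which your strict-convexity argument rests --- that for \emph{every} nonzero $\psi\in\frak a^*$ the function $\psi(F_\rho)$ is not Liv\v sic cohomologous to a constant --- is false in the generality of the paper, and your Benoist-based argument for it does not work. For a counterexample, take $G=\PSL(2,\R)\times\PSL(2,\R)$, $\G$ a surface group, $\w M=\H^2$ with the hyperbolic metric of holonomy $\rho_1$, and $\rho=(\rho_1,\rho_2)$ with $\rho_2$ a non-conjugate Fuchsian representation: this is hyperconvex and Zariski dense, yet for $\psi=$ the first coordinate functional the periods are $\psi(\lambda(\rho\g))=p(\g)$, so by Liv\v sic's theorem $\psi(F_\rho)$ is cohomologous to the constant $1$ and $t\mapsto P(-(\varphi_0+t\psi)(F_\rho))$ is affine, not strictly convex. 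Benoist's density theorem \ref{teo:densos} cannot rule this out: density of the group generated by $\{\lambda(\rho\g)\}$ is perfectly compatible with $\psi(\lambda(\rho\g))=c\,p(\g)$, and your attempted contradiction conjures a \emph{second} independent functional $\psi'$ whose composition with $F_\rho$ is also cohomologous to a constant --- nothing in the hypothesis provides such a $\psi'$. So strict convexity of the full pressure function $\varphi\mapsto P(-\varphi(F_\rho))$, which is what your plan needs, simply fails; note that $D_\rho$ can nevertheless be strictly convex, because the degenerate (affine) directions are transverse to the level set $\{P(-\varphi(F_\rho))=0\}$.

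The paper's proof avoids this by demanding non-degeneracy only along directions tangent to the level set: for $\varphi_0\in\bord D_\rho$ and $\varphi\in T_{\varphi_0}\bord D_\rho=\{\varphi:\int\varphi(F_\rho)\,dm_{\varphi_0}=0\}$, if $\varphi(F_\rho)$ were cohomologous to a constant, the constant would have to be $\int\varphi(F_\rho)\,dm_{\varphi_0}=0$; cohomologous to zero makes all periods $\varphi(\lambda(\rho\g))$ vanish, and only then does Benoist's theorem force $\varphi=0$. Proposition \ref{prop:pression} then gives strict convexity of $t\mapsto P(-(\varphi_0-t\varphi)(F_\rho))$ with critical point at $0$, so the affine tangent hyperplane meets $\bord D_\rho$ only at $\varphi_0$, which is the strict convexity of $D_\rho$. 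Repair your argument along these lines. A secondary point: your ``more simply'' justification that $dG_{\varphi_0}\neq0$ (``$F_\rho$ is not cohomologous to a constant vector, so $\int F_\rho\,dm_{\varphi_0}\neq0$'') is a non sequitur --- a non-coboundary can have zero mean against a given equilibrium state, and strict convexity alone does not exclude a critical point either. The correct input is Corollary \ref{cor:positiva2}: on $\bord D_\rho\subset\inter\cono$ the function $\varphi_0(F_\rho)$ is cohomologous to a positive function, so $\varphi_0\bigl(\int F_\rho\,dm_{\varphi_0}\bigr)>0$ and the differential is nonzero; with that substitution your implicit-function-theorem step for the analyticity of $\bord D_\rho$ matches the paper's.
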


\begin{proof} Fix $F_\rho:T^1\w M\to\frak a$ given by Ledrappier\cite{ledrappier}'s theorem \ref{teo:ledrappier} and consider the function $\vo P:\frak a^*\to\R$ given by $\vo
P(\varphi)=P(-\varphi(F_\rho)).$ Proposition \ref{prop:pression} implies that this
function is analytic and its derivative $d\vo P:\frak a^*\to\frak a$ is given by
the formula $$d \vo P(\varphi)=-\int F_\rho dm_\varphi$$ where $m_\varphi$ is the equilibrium state of $\varphi(F_\rho).$

If $\varphi\in\frak a^*$ is such that $\vo P(\varphi)=0$ then proposition \ref{prop:D} implies that $\varphi$ belongs to the boundary of the set $D_\rho,$ in particular $\varphi\in \cone^*_\rho$ and $h_\varphi=1.$ One deduces that $\varphi(F_\rho)$ is cohomologous to a positive function (corollary \ref{cor:positiva2}) and thus $$\int \varphi(F_\rho)dm_\varphi\neq0.$$ Hence the vector $$d\vo P(\varphi)=\int F_\rho dm_\varphi\neq 0.$$

We conclude that 0 is a regular value of $\vo P$ and thus $\bord D_\rho=\vo P^{-1}\{0\}$ is an analytic sub manifold of $\frak a^*.$

The tangent space to $\bord D_\rho$ at $\varphi_0\in \bord D_\rho$ is $$T_{\varphi_0}\bord D_\rho=\{\varphi\in\frak a^*: \int \varphi(F_\rho) dm_{\varphi_0}=0\}.$$ Consider then $\varphi\in T_{\varphi_0}\bord D_\rho.$ Since the periods of $F_\rho$ generate a dense subgroup of $\frak a$ (Benoist's theorem \ref{teo:densos}) the function $\varphi(F_\rho)$ is not cohomologous to zero. Proposition \ref{prop:pression} then implies that the function $$t\mapsto \vo P(\varphi_0-t\varphi)$$ is strictly convex with one critical point at $0.$ Thus $\varphi_0+T_{\varphi_0}\bord D_\rho$ does not intersect $\bord D_\rho$ (except at $\varphi_0$) and thus $D_\rho$ is strictly convex.
\end{proof}

The following lemma is a consequence of Hahn-Banach's theorem, one can find a
proof in \S 4.1 of Quint\cite{quints}:

\begin{lema}[Duality lemma]\label{lema:cono} Let $V$ be a finite dimensional
vector space and $\Psi:V\to\R\cup\{-\infty\}$ a concave homogeneous upper
semi-continuous function. Set $$V^*_\Psi=\{\Phi\in V^*:\Phi\geq\Psi\}\textrm{
and }L_\Psi=\{x\in V:\Psi(x)>-\infty\}.$$ Suppose that $V^*_\Psi$ and $L_\Psi$
have non empty interior, then 
\begin{itemize}
\item[-] For every $x\in L_\Psi$ one has $$\Psi(x)=\inf_{\Phi\in
V^*_\Psi}\Phi(x),$$ \item[-] the set $V^*_\Psi$ is strictly convex if and only
if $\Psi$ is differentiable on the interior of $L_\Psi$ and with vertical
tangent on the boundary. \item[-] the boundary $\bord V^*_\Psi$ is
differentiable if and only if the function $\Psi$ is strictly concave.
\end{itemize}
When this conditions are satisfied, the derivative induces bijection between the
set of directions contained in the interior of $L_\Psi$ and $\bord V^*_\Psi.$
\end{lema}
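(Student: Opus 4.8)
The plan is to treat this as the Legendre--Fenchel duality between the $1$-homogeneous concave function $\Psi$ and the closed convex set $K:=V^*_\Psi$, organized around the superdifferential $\bord\Psi(x):=\{\Phi\in V^*:\Phi|_{L}\ge\Psi|_{L}\text{ and }\Phi(x)=\Psi(x)\}$ for $x\in L:=L_\Psi$. One first notes that concavity, homogeneity and upper semicontinuity make $L$ a closed convex cone and $K$ a closed convex subset of $V^*$ stable under adding functionals that are nonnegative on $L$, and that $\bord\Psi(x)\subset K$ for every $x\in L$.

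I would first establish the infimum formula. Given $x_0\in L$ and $\eps>0$, the hypograph $\{(x,t)\in V\times\R:t\le\Psi(x)\}$ is a closed convex cone not containing $(x_0,\Psi(x_0)+\eps)$, so Hahn--Banach produces a nonzero pair $(\Phi,s)\in V^*\times\R$ with $\Phi(x)+st\le0$ on the hypograph (the bound being $0$ since the set is a cone) and $\Phi(x_0)+s(\Psi(x_0)+\eps)>0$; evaluating the first inequality at $(x_0,\Psi(x_0))$ and comparing with the second forces $s>0$, and after rescaling to $s=1$ one reads off $-\Phi\in K$ with $(-\Phi)(x_0)<\Psi(x_0)+\eps$. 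Hence $\inf_{\Phi\in K}\Phi(x_0)=\Psi(x_0)$, the reverse inequality being the definition of $K$.

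Next I would set up two dictionary facts. First, a functional $\Phi\in K$ lies on $\bord K$ exactly when $\Phi\in\bord\Psi(x)$ for some $x\in L\setminus\{0\}$: interior points of $K$ are strictly above $\Psi$ on the compact slice $\{x\in L:\|x\|=1\}$, whereas for $\Phi\in\bord K$ one picks $\Phi_n\to\Phi$ with $\Phi_n\notin K$, points $x_n$ on that slice with $\Phi_n(x_n)<\Psi(x_n)$, and passes to a limit using upper semicontinuity of $\Psi$. Second, for $x\in\inter L$ the function $\Psi$ is continuous near $x$, so $\bord\Psi(x)$ is nonempty, compact and convex, reducing to a single point precisely when $\Psi$ is differentiable at $x$, in which case $\bord\Psi(x)=\{d\Psi_x\}$ and Euler's identity gives $d\Psi_x(x)=\Psi(x)$, so $d\Psi_x\in\bord K$. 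Here I read ``$\Psi$ has vertical tangent on $\bord L$'' as ``$\bord\Psi(x)=\emptyset$ for every $x\in\bord L\setminus\{0\}$'', i.e.\ the directional derivatives of $\Psi$ into $L$ blow up at boundary points. With these two facts the three bullets fall out. If $K$ is strictly convex then each $\bord\Psi(x)\subset\bord K$ with $x\in\inter L$ is a point (differentiability), and if some $x\in\bord L\setminus\{0\}$ had $\bord\Psi(x)\ne\emptyset$, a supporting functional $\mu\ge0$ of $L$ vanishing at $x$ would put the whole ray $\{\Phi+t\mu:t\ge0\}$ inside $\bord K$, a contradiction; conversely, differentiability plus vertical tangent forbids a segment $[\Phi_0,\Phi_1]\subset\bord K$, since a common tightness point $x$, necessarily in $\inter L$, would force $\Phi_0(x)=\Phi_1(x)=\Psi(x)$ (average equals minimum) hence $\Phi_0,\Phi_1\in\bord\Psi(x)=\{d\Psi_x\}$. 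For the third bullet, $v\in V$ supports $K$ at $\Phi\in\bord K$ iff $\Phi$ minimizes evaluation at $v$ over $K$, iff $v\in L$ and $\Phi\in\bord\Psi(v)$ by the infimum formula; so $\bord K$ is differentiable iff each $\Phi\in\bord K$ is tight in a unique direction, which by the ``affine on a segment'' argument ($\Phi\in\bord\Psi(x)\cap\bord\Psi(y)$ makes $\Psi$ affine on $[x,y]$) is equivalent to strict concavity of $\Psi$. The final bijection is then $x\mapsto d\Psi_x$ on directions of $\inter L$: well defined and into $\bord K$ by the second fact, injective by strict concavity, surjective by the first fact together with the vertical tangent condition placing the tightness point in $\inter L$.

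The main obstacle is the analysis along $\bord L$: one must pin down the precise meaning of ``vertical tangent'', use that $L$ is genuinely closed (true in the application, where it is a limit cone), and verify that both the separation argument and the correspondence between $\bord K$ and the superdifferential survive at boundary points of the domain. Everything away from $\bord L$ is the routine Legendre calculus for concave functions.
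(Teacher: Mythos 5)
The paper gives no proof of this lemma at all---it only remarks that the statement is a consequence of Hahn--Banach and refers to \S 4.1 of Quint---and your argument is exactly that standard route: Hahn--Banach separation of the closed conical hypograph to get $\Psi(x)=\inf_{\Phi\in V^*_\Psi}\Phi(x)$, then the dictionary identifying $\bord V^*_\Psi$ with functionals tight at some nonzero direction of $L_\Psi$ and the superdifferential calculus giving the two equivalences and the final bijection. Your write-up is correct in substance, the only loose ends (the precise meaning of the vertical-tangent condition, closedness of $L_\Psi$, and the converse of the third bullet when a segment of affinity lies entirely in $\bord L_\Psi$) being exactly the boundary issues you flag yourself and harmless in the paper's application.
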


We find the following corollary:

\begin{cor}\label{cor:tangente} The growth indicator $\psi_\rho$ of a Zariski dense hypercon\-vex representation $\rho$ is strictly concave, analytic on the interior of the limit cone and with vertical tangent on its boundary. If $P(-\varphi_0(F_\rho))=0$ then $\varphi_0$ is tangent to $\psi_\rho$ in the direction given by the vector $$\int F_\rho dm_{\varphi_0}$$ and the value $$\psi_\rho(\int F_\rho dm_{\varphi_0})=h(\phi_t,m_{\varphi_0})$$ is the metric entropy of the geodesic flow for the equilibrium state $m_{\varphi_0}.$
\end{cor}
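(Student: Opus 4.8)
The strategy is to feed the output of Proposition \ref{prop:algo} into the Duality Lemma \ref{lema:cono}. First I would set $V=\frak a$ and $\Psi=\psi_\rho$. Quint's Theorem \ref{teo:teoquint} guarantees that $\psi_\rho$ is concave, homogeneous and upper semi-continuous and that $L_\Psi=\cone_\rho$; Corollary \ref{cor:interior} (or just Theorem \ref{teo:benoistlimite}) gives that $\cone_\rho$ has non-empty interior, and the inclusion $D_\rho\subset\cono$ together with the fact that $\cono$ is a full-dimensional cone (dual of a cone with non-empty interior) shows $V^*_\Psi=D_\rho$ has non-empty interior. So the hypotheses of Lemma \ref{lema:cono} are met, with $V^*_\Psi=D_\rho$ in the notation there.

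Now Proposition \ref{prop:algo} says precisely that $D_\rho$ is strictly convex with analytic (in particular differentiable) boundary. The Duality Lemma then yields immediately: since $D_\rho=V^*_\Psi$ is strictly convex, $\psi_\rho$ is differentiable on $\inter\cone_\rho$ and has vertical tangent on $\bord\cone_\rho$; and since $\bord D_\rho$ is differentiable, $\psi_\rho$ is strictly concave. To upgrade ``differentiable'' to ``analytic'' on $\inter\cone_\rho$ I would use the bijection in the last clause of Lemma \ref{lema:cono} between directions in $\inter\cone_\rho$ and $\bord D_\rho$: this bijection is realized by the Gauss map / derivative, and on the $D_\rho$ side we have the explicit analytic parametrization $\varphi\mapsto -d\vo P(\varphi)=\int F_\rho\,dm_\varphi$ of $\bord D_\rho=\vo P^{-1}\{0\}$ with $\vo P$ analytic and $0$ a regular value (all from the proof of Proposition \ref{prop:algo}). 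Composing with the inverse of this analytic diffeomorphism and using homogeneity of $\psi_\rho$ to extend radially, one gets analyticity of $\psi_\rho$ on $\inter\cone_\rho$.

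For the second half of the statement, fix $\varphi_0$ with $P(-\varphi_0(F_\rho))=0$, i.e.\ $\varphi_0\in\bord D_\rho$. By the duality relation $\psi_\rho(v)=\inf_{\varphi\in D_\rho}\varphi(v)$, the direction $v_0$ where $\varphi_0$ is tangent to $\psi_\rho$ is the one where this infimum is attained at $\varphi_0$, equivalently where $\varphi_0$ is the supporting functional; by Lagrange/convex duality this is exactly the direction normal to $\bord D_\rho$ at $\varphi_0$, which is $-d\vo P(\varphi_0)=\int F_\rho\,dm_{\varphi_0}$ (shown nonzero in the proof of Proposition \ref{prop:algo}). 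It remains to compute $\psi_\rho(v_0)=\varphi_0(v_0)=\varphi_0(\int F_\rho\,dm_{\varphi_0})=\int\varphi_0(F_\rho)\,dm_{\varphi_0}$. Since $\vo P(\varphi_0)=P(-\varphi_0(F_\rho))=0$ and $m_{\varphi_0}$ is by definition the equilibrium state of $-\varphi_0(F_\rho)$ (using the sign convention fixed before the corollary), the variational principle gives $0=P(-\varphi_0(F_\rho))=h(\phi_t,m_{\varphi_0})-\int\varphi_0(F_\rho)\,dm_{\varphi_0}$, hence $\psi_\rho(v_0)=\int\varphi_0(F_\rho)\,dm_{\varphi_0}=h(\phi_t,m_{\varphi_0})$, as claimed.

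The routine points are the verification of the non-degeneracy hypotheses in Lemma \ref{lema:cono} and the bookkeeping of signs in the pressure identity. The one genuine step beyond a direct citation is the passage from differentiability to analyticity of $\psi_\rho$: I expect the main obstacle to be checking carefully that the Gauss-map identification of $\inter\cone_\rho$ with $\bord D_\rho$ in the Duality Lemma agrees with the analytic map $\varphi\mapsto\int F_\rho\,dm_\varphi$ and that its inverse is analytic (non-vanishing Hessian of $\vo P$ transverse to $\bord D_\rho$), which follows from the strict convexity of $t\mapsto\vo P(\varphi_0-t\varphi)$ established via Proposition \ref{prop:pression} and Benoist's density theorem \ref{teo:densos}.
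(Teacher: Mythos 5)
Your argument is correct and follows the paper's route almost exactly: Proposition \ref{prop:algo} fed into the Duality Lemma \ref{lema:cono} gives strict concavity, differentiability on $\inter\cone_\rho$ and the vertical tangent on $\bord\cone_\rho$; analyticity is then obtained, as in the paper, by inverting the analytic Gauss-map parametrization $\varphi\mapsto -d\vo P(\varphi)=\int F_\rho\,dm_\varphi$ of $\bord D_\rho$ (invertibility of its derivative coming from Proposition \ref{prop:pression} and Benoist's density theorem); and the tangency direction is read off from $d\vo P$ together with the first point of the Duality Lemma. The only place you genuinely diverge is the final identity $\psi_\rho(\int F_\rho\,dm_{\varphi_0})=h(\phi_t,m_{\varphi_0})$: you extract $h(\phi_t,m_{\varphi_0})=\int\varphi_0(F_\rho)\,dm_{\varphi_0}$ directly from the variational principle, using that $m_{\varphi_0}$ is the equilibrium state of $-\varphi_0(F_\rho)$ and that the pressure vanishes, whereas the paper takes a longer detour: it reparametrizes the geodesic flow by $\varphi_0(F_\rho)$, applies Lemma \ref{lema:entropia2} and Abramov's formula (\ref{eq:abramov}) to get $h_{\tope}(\sigma_t)=h(\phi_t,m_{\varphi_0})/\int\varphi_0(F_\rho)\,dm_{\varphi_0}$, and identifies $h_{\tope}(\sigma_t)=h_{\varphi_0}=1$ via Bowen's periodic-orbit formula. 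Your shortcut is valid and cleaner; the paper's route buys only the re-derivation of $h_{\varphi_0}=1$ on $\bord D_\rho$, which is already contained in Lemma \ref{obs:menor1}. You were also right to flag and fix the sign convention for $m_\varphi$ (it must be the equilibrium state of $-\varphi(F_\rho)$ for the derivative formula $d\vo P(\varphi)=-\int F_\rho\,dm_\varphi$ to hold), a point on which the paper's own wording is inconsistent.
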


\begin{proof} Proposition \ref{prop:algo} together with the duality lemma \ref{lema:cono} imply then that:

\begin{itemize}
\item[i)]Since $D_\rho$ is strictly convex, $\psi_\rho$ is of class $\clase^1$
on the interior of the cone of $\cone_\rho$ but with vertical tangent on its boundary, \item[ii)] Since $\bord D_\rho$ is of class $\clase^1,$
$\psi_\rho$ is strictly concave.
\end{itemize}

Proposition \ref{prop:pression} implies that the derivative of the bijection between interior directions of $\cone_\rho$ and $\bord D_\rho$ is invertible. Thus, since $\bord D_\rho$ is analytic, the derivative of $\psi_\rho$ is analytic on the interior of $\cone_\rho$ and thus $\psi_\rho$ is analytic.

The formula $$d \vo P(\varphi)=-\int F_\rho dm_\varphi$$ together with the first point of the duality lemma \ref{lema:cono} imply that $\varphi_0\in\bord D_\rho$ is tangent to $\psi_\rho$ in the direction given by the vector $\int F_\rho dm_{\varphi_0},$ moreover one has $$\varphi_0( \int F_\rho dm_{\varphi_0})=\psi_\rho( \int F_\rho dm_{\varphi_0}).$$ Since $\varphi_0(F_\rho )$ is cohomologous to a positive function (corollary \ref{cor:positiva2}), we can consider $\sigma_t:\G\/T^1\w M\mismo$ the reparametrization of the geodesic flow by $\varphi_0(F_\rho).$ Applying lemma \ref{lema:entropia2}  and Abramov\cite{abramov}'s formula (\ref{eq:abramov}) we have that the topological entropy of $\sigma_t$ verifies $$h_{\tope}(\sigma_t)=h(\phi_t,m_{\varphi_0})/\int \varphi_0(F_\rho)dm_{\varphi_0}.$$ Following proposition \ref{prop:bowenentropia} the topological entropy of $\sigma_t$ is the exponential growth rate of its periodic orbits, i.e. $h_{\tope}(\sigma_t)=h_{\varphi_0},$ this last quantity is equal to 1 since $\varphi_0\in\bord D_\rho$ and thus $$\psi_\rho (\int F_\rho d m_{\varphi_0}) =\int \varphi_0(F_\rho) dm_{\varphi_0} =h(\phi_t ,m_{\varphi_0}).$$ This finishes the proof

\end{proof}

\section{Continuity properties}

In this section we are interested in showing that the objects one associates to a Zariski dense $\t$-representation vary continuously with the representation. We are concerned in the cone $\cone_\rho,$ the growth indicator $\psi_\rho$ and the growth form $\ta_\rho.$

For a Zariski dense hyperconvex representation $\rho:\bord\G\to G$ denote $F_\rho:T^1\w M\to\frak a$ the function given by Ledrappier\cite{ledrappier}'s theorem \ref{teo:ledrappier} for the cocycle $\vect^\rho$ (this choice is only valid modulo Livsic cohomology).

Recall that $\phi_t:\G\/T^1\w M\mismo$ is the geodesic flow and denote $\cal M^{\phi_t}$ the set of $\phi_t$-invariant probabilities (for all $t$).

\begin{lema}\label{lema:genera} The set $$\{\int F_\rho dm:m\in\cal M^{\phi_t}\}$$ is compact, doesn't contain $\{0\}$ and generates the cone $\cone_\rho^\t.$
\end{lema}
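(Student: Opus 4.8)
The plan is to treat the three assertions separately; the first two are quick and the identification of the cone is the substantive point. First I would set $\Phi(m):=\int F_\rho\,dm$ and record that $\Phi$ is affine and weak-$*$ continuous on $\cal M^{\phi_t}$, with values in the finite dimensional space $\frak a$, and that $\Phi(m)$ does not depend on the Liv\v sic cohomology representative chosen for $F_\rho$ (a coboundary integrates to $0$ against any invariant probability). Since $\G\/T^1\w M$ is compact, $\cal M^{\phi_t}$ is weak-$*$ compact, so $\Phi(\cal M^{\phi_t})$ is automatically a compact convex subset of $\frak a$; this settles the compactness claim. Recall also that, by Benoist's theorem \ref{teo:benoistlimite}, the limit cone $\cone_\rho^\t$ of $\rho(\G)$ is the (closed, convex) cone generated by $\{\lambda(\rho\g):\g\in\G\}$, and that by corollary \ref{cor:interior} it is a proper cone contained in the interior of $\frak a^+$, hence pointed, so its dual cone $(\cone_\rho^\t)^*$ has non empty interior.

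To see that $0\notin\Phi(\cal M^{\phi_t})$, I would pick $\varphi$ in the interior of $(\cone_\rho^\t)^*$ and apply corollary \ref{cor:positiva2}, which gives a strictly positive function cohomologous to $\varphi(F_\rho)$. Then $\varphi(\Phi(m))=\int\varphi(F_\rho)\,dm>0$ for every $m\in\cal M^{\phi_t}$, so $0\notin\Phi(\cal M^{\phi_t})$; in fact the whole image lies in the open half space $\{\varphi>0\}$. Being moreover compact and convex, $\Phi(\cal M^{\phi_t})$ generates a closed convex cone, which I will denote $\scr C$.

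For the identification $\scr C=\cone_\rho^\t$ I would argue via periodic orbits. Let $m_{[\g]}$ be the normalized Lebesgue measure carried by the periodic orbit $[\g]$, of period $p([\g])$; by lemma \ref{lema:espectro} and theorem \ref{teo:ledrappier}, $\Phi(m_{[\g]})=p([\g])^{-1}\int_{[\g]}F_\rho=p([\g])^{-1}\lambda(\rho\g)$. Thus each $\lambda(\rho\g)=p([\g])\,\Phi(m_{[\g]})$ lies in the closed cone $\scr C$, and since $\cone_\rho^\t$ is the smallest closed cone containing $\{\lambda(\rho\g):\g\in\G\}$ we get $\cone_\rho^\t\subset\scr C$. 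For the reverse inclusion I would invoke the classical fact that the geodesic flow $\phi_t$, being a transitive (indeed mixing) Anosov flow admitting a Markov coding (Bowen\cite{bowen2}), has the property that the convex hull of the periodic-orbit measures is weak-$*$ dense in $\cal M^{\phi_t}$. Hence, for any $m\in\cal M^{\phi_t}$, the vector $\Phi(m)$ is a limit of convex combinations of the vectors $p([\g])^{-1}\lambda(\rho\g)\in\cone_\rho^\t$; as $\cone_\rho^\t$ is closed and convex, $\Phi(m)\in\cone_\rho^\t$, so $\scr C\subset\cone_\rho^\t$ and therefore $\scr C=\cone_\rho^\t$.

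The only step that is not purely formal is the density of the convex hull of periodic-orbit measures in $\cal M^{\phi_t}$; this is standard for transitive Anosov flows (it follows, for instance, from Bowen's\cite{bowen2} Markov coding together with the specification property). With that ingredient in hand, the lemma is simply a bookkeeping combination of corollary \ref{cor:positiva2}, lemma \ref{lema:espectro} and Benoist's theorem \ref{teo:benoistlimite}.
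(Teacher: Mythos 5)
Your proof is correct and follows essentially the same route as the paper: compactness from weak-$*$ compactness of $\cal M^{\phi_t}$, exclusion of $0$ by evaluating a functional in the interior of the dual cone (you route through corollary \ref{cor:positiva2}, the paper through lemma \ref{lema:finite} and Ledrappier's lemma \ref{lema:lemaledrappier}, which amounts to the same thing), and identification of the cone via periodic-orbit measures together with the density of their convex combinations (you cite specification/Markov coding where the paper cites the Anosov closing lemma). Your write-up just makes the two inclusions explicit where the paper leaves them implicit.
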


\begin{proof}
Compactness is immediate since $\cal M^{\phi_t}$ is compact. Considering some $\varphi$ in the interior of the dual cone $\cone_\rho^*$ and applying lemma \ref{lema:finite} together with Ledrappier's lemma \ref{lema:lemaledrappier} one obtains that zero does not belong to $\{\int F_\rho dm:m\in\cal M^{\phi_t}\}$. 

The limit cone $\cone_\rho$ is the closed cone generated by the spectrum $\lambda(\rho\g)=\int_{[\g]} F_\rho.$ Since convex combination of periodic orbits are dense in $\cal M^{\phi_t}$ (Anosov's closing lemma c.f.Shub\cite{Shub}) the last statement follows.
\end{proof}

Denote $\hom^Z_\Pi(\G,G)$ for the set of Zariski dense hyperconvex representations endowed with the topology as subset of $G^{\cal A}$ where $\cal A$ is a finite generator of $\G.$

Guichard-Weinhard\cite{guichardweinhard} have shown that Zariski dense hyperconvex representations are the so called Anosov representations, and are thus an open set in the space of representations. From this result together with Labourie\cite{labourie} one obtains the following proposition:

\begin{prop}[Theorem 4.11 of Guichard-Weinhard\cite{guichardweinhard} + proposition 2.1 of Labourie\cite{labourie}]\label{prop:GW} The function that associates to a Zariski dense hyperconvex representation its equivariant maps is continuous and the H\"older exponent of the equivariant maps can be chosen locally constant.
\end{prop}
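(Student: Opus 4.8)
The plan is to reduce the statement entirely to the two cited results. First I would recall the precise content of Guichard--Weinhard's Theorem 4.11: a Zariski dense hyperconvex representation $\rho:\G\to G$ is an Anosov representation with respect to the minimal parabolic $P=P_\Pi$, the set of such representations is open in $\hom(\G,G)$, and the associated limit maps (the Anosov data) depend continuously on $\rho$ in the compact--open topology. The equivariant map $\z:\bord\G\to\scr F$ appearing in our definition of hyperconvexity is exactly the limit map furnished by the Anosov structure, so continuity of $\rho\mapsto\z_\rho$ as a map into $C^0(\bord\G,\scr F)$ is immediate from that theorem. The only extra ingredient needed is that the limit maps are not merely continuous but H\"older, with an exponent that can be taken locally constant in $\rho$; this is precisely Proposition 2.1 of Labourie, whose argument (a contraction/cone estimate for the flow on the associated bundle) gives a H\"older exponent depending only on the hyperbolicity constants of the Anosov splitting, which themselves vary continuously and hence are locally bounded. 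On a neighborhood $U$ of a fixed $\rho_0$ one may therefore choose a single exponent $\a>0$ valid for all $\rho\in U$.

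Next I would assemble these into the stated conclusion. Since the equivariant maps $\z_\rho$ are all H\"older with the common exponent $\a$ on $U$, and since convergence $\rho_n\to\rho$ in $\hom(\G,G)$ forces $\z_{\rho_n}\to\z_\rho$ uniformly (the $C^0$ statement of GW), one gets in fact convergence in $\holder^{\a'}(\bord\G,\scr F)$ for every $\a'<\a$: a uniformly bounded family of $\a$-H\"older maps that converges uniformly converges in any weaker H\"older norm, by the standard interpolation inequality $K^{(\a')}_{f}\le (2\|f\|_\infty)^{1-\a'/\a}(K^{(\a)}_f)^{\a'/\a}$ applied to $f=\z_{\rho_n}-\z_\rho$ (read in local charts of $\scr F$). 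This is the sense in which ``the function that associates to a hyperconvex representation its equivariant maps is continuous''. The transversality condition $(\z_\rho(x),\z_\rho(y))\in\posgen$ for $x\neq y$ is an open condition and is preserved under this convergence, so it is automatically inherited on $U$; this is also implicit in the Anosov formalism.

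I do not expect a genuine obstacle here: the proposition is a packaging of two theorems already in the literature, and the statement is explicitly attributed to them. The one point requiring a little care is matching conventions—checking that ``Anosov with respect to $P_\Pi$'' in the sense of Guichard--Weinhard really does produce the equivariant map into $\scr F=G/P_\Pi$ with the transversality into $\posgen$ demanded by our Definition, rather than some other flag space, and that the Hölder regularity in Labourie's surface-group setting transfers verbatim to the present setting of $\G=\pi_1$ of a compact negatively curved manifold (it does, since Labourie's argument only uses that $\bord\G$ carries a Hölder structure and that the geodesic flow on $T^1\w M$ is Anosov). Once those bookkeeping points are settled the proof is complete.
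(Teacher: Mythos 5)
The paper offers no independent proof of this proposition --- it is stated purely as a citation of Guichard--Weinhard's Theorem 4.11 (Anosov property, openness, continuity of limit maps) and Labourie's Proposition 2.1 (H\"older regularity with exponent governed by the hyperbolicity constants) --- and your proposal follows exactly that route, so it is correct and essentially the same approach. Your added interpolation remark (a locally uniform H\"older bound plus uniform convergence gives convergence in any weaker H\"older norm, hence continuity of the cocycles in the $\cal C^\alpha$ topology) is a reasonable piece of plumbing consistent with how the proposition is used later, not a departure from the paper.
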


This proposition together with corollary \ref{cor:continuidadF} give:

\begin{prop}\label{prop:funcion} The function that associates to every $\rho\in\hom^Z_\Pi(\G,G)$ the cohomology class of $F_\rho:T^1\w M\to\frak a$ is continuous.
\end{prop}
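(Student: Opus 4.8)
The plan is to factor the assignment $\rho\mapsto F_\rho$ as $\rho\mapsto\vect^\rho\mapsto F_{\vect^\rho}$ and to prove each arrow is continuous: the first by a direct estimate, the second by Corollary \ref{cor:continuidadF}. Fix $\rho_0\in\hom^Z_\Pi(\G,G)$. By Proposition \ref{prop:GW} there are a neighbourhood $U$ of $\rho_0$ and an exponent $\a>0$ so that for every $\rho\in U$ the equivariant map $\z_\rho:\bord\G\to\scr F$ is $\a$-H\"older and $\rho\mapsto\z_\rho$ is continuous for the $\a$-H\"older topology on maps $\bord\G\to\scr F$. (If one reads Proposition \ref{prop:GW} as giving only uniform convergence of the $\z_\rho$ together with uniformly bounded $\a$-H\"older seminorms, a standard interpolation argument upgrades this to convergence in every $\a'$-H\"older norm with $\a'<\a$, which is all that is used below; we keep writing $\a$.)

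First I would check that $\rho\mapsto\vect^\rho$ is continuous as a map $U\to\cal C^\a$, where $\vect^\rho(\g,x)=\bus(\rho(\g),\z_\rho(x))$ and $\cal C^\a$ carries the topology of \S2, i.e.\ the one defined by $d(c,c')=\sup_{\g\in\cal A}\|c(\g,\cdot)-c'(\g,\cdot)\|_\a$ for a fixed finite generator $\cal A$ of $\G$. Since $\cal A$ is finite it suffices to treat each $\g\in\cal A$ separately. The evaluation $\rho\mapsto\rho(\g)$ is continuous, so $\rho(\g)$ ranges over a compact neighbourhood of $\rho_0(\g)$ in $G$; on such a compact set the Busemann map $\bus$ is real analytic, hence the functions $\bus(\rho(\g),\cdot)$, $\rho\in U$, are uniformly $\clase^1$ on the compact manifold $\scr F$ and $\bus(\rho(\g),\cdot)\to\bus(\rho_0(\g),\cdot)$ in $\clase^1(\scr F)$ as $\rho\to\rho_0$. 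Combining this with $\z_\rho\to\z_{\rho_0}$ in the $\a$-H\"older topology and the elementary fact that post-composition with a $\clase^1$ map of $\scr F$ is continuous for the $\a$-H\"older topology (the $\a$-H\"older seminorm of $x\mapsto\bus(h,\z(x))$ is bounded by the Lipschitz constant of $\bus(h,\cdot)$ times the $\a$-H\"older seminorm of $\z$, with a matching estimate for differences), one gets $\|\vect^\rho(\g,\cdot)-\vect^{\rho_0}(\g,\cdot)\|_\a\to0$. Hence $\rho\mapsto\vect^\rho$ is continuous into $\cal C^\a$, and all these cocycles share the exponent $\a$, so the statement makes sense.

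Now I would compose with Corollary \ref{cor:continuidadF}. That corollary concerns real valued cocycles, so I would argue coordinatewise: fix a basis $\va_1,\dots,\va_r$ of $\frak a^*$, $r=\dim\frak a$; then each $\va_i\circ\vect^\rho$ lies in $\cal C^\a$, and one may take for $F_\rho$ the vector $(F_{\va_1\circ\vect^\rho},\dots,F_{\va_r\circ\vect^\rho})$ written in the dual basis of $\frak a$, each coordinate being the function produced by formula (\ref{eq:formula}) with the same fixed auxiliary data $o,f$. By the previous paragraph $\rho\mapsto\va_i\circ\vect^\rho$ is continuous $U\to\cal C^\a$, and by Corollary \ref{cor:continuidadF} the map $c\mapsto F_c$ is continuous (indeed analytic) $\cal C^\a\to\holder^\a(T^1\w M)$; composing, $\rho\mapsto F_\rho$ is continuous from $U$ into the space of $\frak a$-valued $\a$-H\"older functions on $T^1\w M$. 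Finally, by Liv\v sic's theorem the space of $\G$-invariant H\"older functions cohomologous to zero is closed (as recalled after Corollary \ref{cor:continuidadF}), so convergence in $\holder^\a$ implies convergence of Liv\v sic cohomology classes; thus $\rho\mapsto[F_\rho]$ is continuous on $U$. As $\rho_0$ was arbitrary this proves the proposition.

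The main obstacle is the middle step: showing that $\rho\mapsto\vect^\rho$ is continuous for the H\"older-cocycle topology, rather than merely pointwise or for the $\clase^0$ topology. This rests on two points: (i) that $\z_\rho\to\z_{\rho_0}$ in $\a$-H\"older norm, for which one may need the interpolation remark after Proposition \ref{prop:GW} since the cited statement asserts continuity of the equivariant maps without pinning down the topology; and (ii) the composition estimate for the H\"older norm of $h\mapsto\bus(h,\z(\cdot))$ when both $h\in G$ and the $\a$-H\"older map $\z:\bord\G\to\scr F$ vary. Here one must take some care that $\scr F$ is a compact manifold and not a vector space — e.g.\ by working in a finite atlas or embedding $\scr F$ into some $\R^N$ — but, granted this, the estimate itself is routine.
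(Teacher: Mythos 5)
Your proof is correct and follows exactly the route the paper intends: the paper's entire ``proof'' is the one-line observation that Proposition \ref{prop:GW} combined with Corollary \ref{cor:continuidadF} gives the result, and your argument simply fills in the details of that composition (continuity of $\rho\mapsto\vect^\rho$ into $\cal C^\a$, coordinatewise application of the corollary, and passage to the Liv\v sic quotient).
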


One directly obtains the continuity of the cone $\cone_\rho.$

\begin{cor}\label{cor:conolimite} The function $\hom^Z_\Pi(\G,G)\to\{\textrm{compact subsets of } \P(\R^d)\}$ given by $\rho\mapsto\P(\cone_\rho)$ continuous.
\end{cor}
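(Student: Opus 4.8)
The plan is to deduce the continuity of $\rho\mapsto\P(\cone_\rho)$ from the continuity of the cohomology class of $F_\rho$ (Proposition \ref{prop:funcion}) together with the description of $\cone_\rho$ given by Lemma \ref{lema:genera}, namely that $\cone_\rho$ is the closed cone generated by the compact set $\{\int F_\rho\,dm : m\in\cal M^{\phi_t}\}$. I would first fix a continuous section $\rho\mapsto \tilde F_\rho\in\holder^\a(T^1\w M;\frak a)$ of actual representatives (not just cohomology classes); this is possible by Proposition \ref{prop:funcion} since the space of $\G$-invariant H\"older functions cohomologous to zero is a closed subspace, so one can choose representatives continuously on a neighborhood of any given $\rho_0$. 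Since the map $F\mapsto \int F\,dm$ for $m\in\cal M^{\phi_t}$ depends only on the cohomology class, the compact set $C_\rho:=\{\int F_\rho\,dm:m\in\cal M^{\phi_t}\}\subset\frak a\setminus\{0\}$ is well defined and, because $\cal M^{\phi_t}$ is weak-$*$ compact and $(m,F)\mapsto\int F\,dm$ is jointly continuous (it is bilinear and $\sup_{m}|\int F\,dm|\le\|F\|_\infty$), the assignment $\rho\mapsto C_\rho$ is continuous for the Hausdorff topology on compact subsets of $\frak a$.

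Next I would pass from $C_\rho$ to its image in projective space. By Lemma \ref{lema:genera}, $0\notin C_\rho$; more precisely, fixing $\rho_0$ and $\varphi$ in the interior of $\cone_{\rho_0}^*$, Lemma \ref{lema:finite} and Ledrappier's Lemma \ref{lema:lemaledrappier} give a uniform bound $\inf_{m}\varphi(\int F_{\rho_0}\,dm)>0$, and by continuity $\varphi$ stays in the interior of $\cone_\rho^*$ and this infimum stays positive for $\rho$ near $\rho_0$; hence the sets $C_\rho$ stay in a fixed compact region bounded away from the origin, and the projectivization map is uniformly continuous there. Therefore $\rho\mapsto\P(C_\rho)$ is continuous into the compact subsets of $\P(\frak a)$. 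Finally, since $\cone_\rho$ is the closed convex cone generated by $C_\rho$ (Lemma \ref{lema:genera}, using that convex combinations of periodic orbits are dense in $\cal M^{\phi_t}$ by Anosov closing), one has $\P(\cone_\rho)=\overline{\mathrm{conv}}\,\P(C_\rho)$, and taking closed convex hull in $\P(\frak a)$ is a continuous operation on compact sets contained in a common affine chart (which holds here because, by Corollary \ref{cor:interior}, all these cones lie in the interior of the Weyl chamber, a convex set on which $\P$ restricts to a homeomorphism onto its image). Composing, $\rho\mapsto\P(\cone_\rho)$ is continuous.

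The main obstacle is the last step: Hausdorff-continuity of $\rho\mapsto C_\rho$ does not by itself immediately yield Hausdorff-continuity of the generated closed convex cones, since taking convex hulls and closures can be discontinuous in general. The point that rescues it is that everything takes place inside the interior of the Weyl chamber (Corollary \ref{cor:interior}), where projectivization is a homeomorphism onto a convex compact-friendly chart and the closed-convex-hull operation on compact subsets is continuous for the Hausdorff distance; one also needs the uniform lower bound on $\varphi(\int F_\rho\,dm)$ to guarantee that the relevant compact sets do not drift toward the boundary of that chart as $\rho$ varies. I would make this explicit by choosing, once and for all, an affine hyperplane transverse to the limit cones of all $\rho$ in a neighborhood of $\rho_0$ (possible by the continuity of $\cone_\rho$ at the level of $C_\rho$ and Corollary \ref{cor:interior}), intersecting each cone with it to get genuinely compact convex \emph{sets} in affine space, and invoking the standard fact that $K\mapsto\mathrm{conv}(K)$ is $1$-Lipschitz for the Hausdorff metric.
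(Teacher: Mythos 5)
Your proposal is correct and is essentially the fleshed-out version of the paper's own (one-line) argument, which simply combines Lemma \ref{lema:genera} with Proposition \ref{prop:funcion}: Hausdorff-continuity of $\rho\mapsto\{\int F_\rho\,dm:m\in\cal M^{\phi_t}\}$ from continuity of the cohomology class, uniform separation from $0$, then projectivize. The only remark is that the ``main obstacle'' you single out is not actually there: since $\cal M^{\phi_t}$ is convex and $m\mapsto\int F_\rho\,dm$ is affine, the set $C_\rho$ is already compact and convex, so $\cone_\rho=\R_{\geq0}C_\rho$ and $\P(\cone_\rho)=\P(C_\rho)$ directly, making the closed-convex-hull continuity discussion (and the transverse affine hyperplane device) superfluous, though your workaround is itself valid.
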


\begin{proof} Obvious from lemma \ref{lema:genera} and proposition \ref{prop:funcion}.
\end{proof}

\begin{cor} Fix some Zariski dense hyperconvex representation $\rho_0$ and consider some $\varphi$ in the interior of the dual cone ${\cone_{\rho_0}}^*.$ Then the function $$\rho\mapsto h_\varphi(\rho):=\lim_{s\to\infty}\frac{\log\#\{[\g]\in[\G]:\varphi(\lambda(\rho\g))\leq s\}}s$$ is continuous in a neighborhood $U$ of $\rho_0$ such that $\varphi|\cone_\rho-\{0\}>0$ for $\rho\in U.$
\end{cor}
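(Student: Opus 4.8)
The plan is to reduce the statement to the continuity results already established for Hölder cocycles. Fix the Zariski dense hyperconvex representation $\rho_0$ and the functional $\varphi$ in the interior of ${\cone_{\rho_0}}^*$. By Corollary \ref{cor:conolimite} (continuity of $\rho\mapsto\P(\cone_\rho)$) the condition $\varphi|\cone_\rho-\{0\}>0$ is open in $\rho$, so there is a neighborhood $U$ of $\rho_0$ on which $\varphi$ lies in the interior of ${\cone_\rho}^*$ for every $\rho\in U$; on this neighborhood the scalar Hölder cocycle $\varphi\circ\vect^\rho$ has positive periods (they equal $\varphi(\lambda(\rho\g))>0$ by Lemma \ref{lema:espectro}) and, by Lemma \ref{lema:finite}, finite exponential growth rate. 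Hence $\varphi\circ\vect^\rho\in\cal C^\alpha_+$ for $\rho\in U$, and by Proposition \ref{cor:continuidadentropia} the map $c\mapsto h_c$ is analytic on $\cal C^\alpha_+$; in particular $h_\varphi(\rho)=h_{\varphi\circ\vect^\rho}$.

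The remaining point is that $\rho\mapsto\varphi\circ\vect^\rho$ is a continuous map $U\to\cal C^\alpha$. By Proposition \ref{prop:GW} the equivariant map $\z_\rho:\bord\G\to\scr F$ depends continuously on $\rho$, and (after shrinking $U$) one may take the Hölder exponent $\alpha$ to be constant on $U$. Since $\vect^\rho(\g,x)=\bus(\rho(\g),\z_\rho(x))$ and Busemann's cocycle $\bus:G\times\scr F\to\frak a$ is continuous (indeed smooth), composing with $\varphi$ and restricting $\g$ to a finite generator $\cal A$ of $\G$ shows that $\rho\mapsto\varphi\circ\vect^\rho$ is continuous in the topology of $\cal C^\alpha$ defined via the $\|\ \|_\alpha$-norms on $\cal A$. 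Composing with the analytic map $c\mapsto h_c$ yields continuity of $\rho\mapsto h_\varphi(\rho)$ on $U$. Finally, Corollary \ref{cor:exponente} identifies $h_{\varphi\circ\vect^\rho}$ with the exponential growth rate of $\{[\g]\in[\G]:\varphi(\lambda(\rho\g))\leq s\}$, so the displayed limit exists and equals $h_\varphi(\rho)$, completing the proof.

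The only genuinely delicate step is the continuity of $\rho\mapsto\z_\rho$ together with the control on the Hölder exponent, but this is exactly the content of Proposition \ref{prop:GW} (Guichard--Weinhard plus Labourie), which we are free to invoke; everything else is a routine application of the cocycle machinery of Section 2. One should be slightly careful that the \emph{Hölder norm} $\|\z_\rho(\cdot)\|_\alpha$, not merely $\z_\rho$ pointwise, varies continuously — this also follows from Proposition \ref{prop:GW} since the estimates there are uniform on a neighborhood — so that $\varphi\circ\vect^\rho$ converges in the Banach space $\holder^\alpha(\bord\G)$ and not just pointwise, which is what Proposition \ref{cor:continuidadentropia} requires.
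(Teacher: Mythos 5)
Your proposal is correct and follows essentially the same route as the paper: continuity of the equivariant maps (Proposition \ref{prop:GW}) gives continuity of $\rho\mapsto\varphi\circ\vect^\rho$ as a H\"older cocycle, and one concludes by the analyticity of $c\mapsto h_c$ on $\cal C^\alpha_+$ (Proposition \ref{cor:continuidadentropia}), the membership in $\cal C^\alpha_+$ being exactly what Lemmas \ref{lema:espectro} and \ref{lema:finite} provide. You merely spell out the verifications (positive periods, finite growth rate, H\"older-norm continuity) that the paper's two-line proof leaves implicit.
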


\begin{proof} Since the equivariant maps vary continuously with the representation, the function $\rho\mapsto \varphi\circ\vect^\rho:\G\times\bord\G\to\R$ is continuous. The exponential growth rate of $\varphi\circ\vect^\rho$ is $h_\varphi(\rho)$ and thus the corollary is consequence of corollary \ref{cor:continuidadentropia}.
\end{proof}

We can now show theorem B.

\begin{teo}\label{teo:continuo} Let $\rho_0:\G\to G$ be a Zariski dense hyperconvex representation and fix some open cone $\scr C$ such that its closure is contained in the interior of the limit cone $\cone_{\rho_0}$ of $\rho_0.$ Consider a neighborhood $U$ of $\rho_0$ such that $\scr C$ is contained in $\inter(\cone_\rho)$ for every $\rho\in U,$ then the function $:U\to\R$ given by $$\rho\mapsto \psi_\rho|\scr C$$ is continuous.
\end{teo}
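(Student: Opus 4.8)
The plan is to express $\psi_\rho|\scr C$ as a value of the duality lemma \ref{lema:cono} applied to the family of pressure functions $\vo P_\rho(\varphi)=P(-\varphi(F_\rho))$, and then to exploit the continuity of $\rho\mapsto F_\rho$ (modulo Livšic cohomology, proposition \ref{prop:funcion}) together with the analyticity of the pressure in its H\"older potential. Concretely, for $v$ in the fixed cone $\scr C\subset\inter\cone_\rho$, corollary \ref{cor:tangente} (via the duality lemma) gives $\psi_\rho(v)=\inf_{\varphi\in D_\rho}\varphi(v)$, and this infimum is attained at the unique $\varphi=\varphi_\rho(v)\in\bord D_\rho$ whose tangent direction is $v$, i.e.\ the unique $\varphi$ with $P(-\varphi(F_\rho))=0$ and $\int F_\rho\,dm_\varphi\in\R_{>0}\cdot v$. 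So it suffices to show $\rho\mapsto\varphi_\rho(v)$ is continuous (uniformly in $v\in\scr C$), since then $\rho\mapsto\varphi_\rho(v)(v)=\psi_\rho(v)$ is continuous.

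First I would fix $\rho_0$ and a compact neighborhood $U$ as in the statement, and choose (using proposition \ref{prop:GW} / corollary \ref{cor:continuidadF}) representatives $F_\rho$ depending continuously on $\rho\in U$ in $\holder^\a(T^1\w M)\otimes\frak a$ for a fixed H\"older exponent $\a$; this uses that the H\"older exponent can be chosen locally constant. Next, since $\scr C$ has closure in $\inter\cone_{\rho_0}$ and $U$ is chosen so that $\scr C\subset\inter\cone_\rho$ for all $\rho\in U$, lemma \ref{lema:finite} gives that for $\varphi$ ranging over the relevant compact part of $\bord D_\rho$, the functional $\varphi$ lies in $\inter\cone_\rho^*$, hence $\varphi(F_\rho)$ is cohomologous to a strictly positive function (corollary \ref{cor:positiva2}), so $t\mapsto P(-t\varphi(F_\rho))$ is strictly decreasing with nonvanishing derivative at the zero. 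Then I would set up the map $\Phi:U\times\scr C\times\frak a^*\to\R\times\frak a$, $\Phi(\rho,v,\varphi)=\bigl(P(-\varphi(F_\rho)),\;\pi_v(\int F_\rho\,dm_\varphi)\bigr)$, where $\pi_v$ projects $\frak a$ onto a complement of $\R v$; by proposition \ref{prop:pression} this is $\clase^1$ in $\varphi$, jointly continuous in $(\rho,v)$, and its partial differential in $\varphi$ at a solution is invertible precisely because $\bord D_\rho$ is a $\clase^1$ hypersurface transverse to the radial direction $v$ (this is the content of proposition \ref{prop:algo}: $0$ is a regular value of $\vo P_\rho$, and strict convexity of $D_\rho$ means distinct boundary points have distinct tangent directions). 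The implicit function theorem, applied with parameters $(\rho,v)$, then yields a continuous local solution $(\rho,v)\mapsto\varphi_\rho(v)$, and a compactness argument over $\overline{\scr C}\cap\{\|v\|=1\}$ patches these into a globally continuous selection on $U$ (shrinking $U$ if necessary); uniqueness of $\varphi_\rho(v)$ from strict convexity of $D_\rho$ guarantees the local solutions agree on overlaps.

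The main obstacle I anticipate is controlling the solution $\varphi_\rho(v)$ uniformly as $v$ approaches $\bord\scr C$ and as $\rho$ varies — one must ensure the relevant piece of $\bord D_\rho$ stays in a fixed compact subset of $\inter\cone_\rho^*$ so that the positivity in corollary \ref{cor:positiva2}, and hence the invertibility of the $\varphi$-differential, is uniform. This is handled by the hypothesis that $\overline{\scr C}$ is compactly contained in $\inter\cone_\rho$ for all $\rho\in U$: the dual statement is that the tangent functionals $\varphi_\rho(v)$ for $v\in\overline{\scr C}$ lie in a compact subset of $\inter\cone_\rho^*$, uniformly over $U$ (using corollary \ref{cor:conolimite} for the continuity of the cones), on which the derivative $t\mapsto\frac{d}{dt}P(-t\varphi(F_\rho))|_{t=0}=-\int\varphi(F_\rho)\,dm_\varphi$ is bounded away from zero by a uniform constant. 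Once this uniform nondegeneracy is in hand, the implicit function theorem argument is routine and the continuity of $\rho\mapsto\psi_\rho|\scr C$ follows.
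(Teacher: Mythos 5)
Your proposal is correct and follows essentially the same route as the paper: both reduce the statement, via the duality between $\psi_\rho$ and $D_\rho$, to the continuity in $\rho$ of the boundary $\bord D_\rho=\{\varphi:P(-\varphi(F_\rho))=0\}$, obtained from the continuity of $\rho\mapsto F_\rho$ (proposition \ref{prop:funcion}) and an implicit-function-theorem argument for the pressure equation (lemma \ref{lema:entropia}, proposition \ref{prop:pression}). Your version merely spells out, by tracking the tangent functional $\varphi_\rho(v)$ and checking the nondegeneracy uniformly on $\overline{\scr C}$, the details the paper's proof leaves implicit.
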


\begin{proof} Consider the set $D_{\rho_0}$ of linear functionals $\varphi\in\frak a^*$ such that $\varphi\geq\psi_{\rho_0},$ and more precisely its boundary $\bord D_\rho.$ It suffices to prove that for some fixed $\varphi\in\bord D_{\rho_0}$ and a given neighborhood $W$ of $\varphi$ there exists a neighborhood $U$ of $\rho_0$ such that for every $\rho\in U$ one has that $\bord D_\rho$ intersects $W.$

Proposition \ref{prop:D} states that $\bord D_\rho$ is the set of linear functionals in ${\cone_\rho}^*$ such that $P(-\varphi (F_\rho))=0$ lemma \ref{lema:entropia} together with proposition \ref{prop:funcion} imply thus the result.

\begin{figure}[h]
\begin{center}
\includegraphics{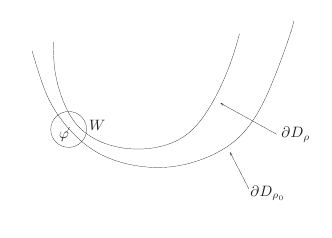}
\caption{The set $D_\rho$ after perturbation.} 
\end{center}
\end{figure}

\end{proof}

Strict concavity of $\psi_\rho$ together with the last theorem imply the following corollaries. Fix some norm $\|\ \|$ on $\frak a$ invariant under the Weyl group.

\begin{cor} The function that $\ta_{\cdot}^{\|\ \|}:\hom^Z_\Pi(\G,G)\to\frak a^*$ that associates to $\rho$ the growth form $\ta_\rho^{\|\ \|}$ is continuous.
\end{cor}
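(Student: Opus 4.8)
The plan is to exploit the fact that the growth form $\ta_\rho^{\|\ \|}$ is defined, via the duality lemma \ref{lema:cono}, as the unique supporting functional of $D_\rho$ achieving $\|\varphi\| = h_\rho^{\|\ \|}$, and to reduce the statement to the already-established continuity of the boundary $\bord D_\rho$ (Theorem \ref{teo:continuo}) together with the strict convexity of $D_\rho$ (Proposition \ref{prop:algo}). First I would recall that since $\psi_\rho$ is strictly concave (Corollary \ref{cor:tangente}), the function $v \mapsto \psi_\rho(v)/\|v\|$ on the unit sphere $\{\|v\|=1\} \cap \inter(\cone_\rho)$ attains its maximum $h_\rho^{\|\ \|}$ at a unique direction $\tau_\rho^{\|\ \|}$; dually, by the duality lemma the functional $\varphi_\rho := \ta_\rho^{\|\ \|} \in \bord D_\rho$ is the unique point of $\bord D_\rho$ of minimal norm, and it is tangent to $\psi_\rho$ exactly in the direction $\tau_\rho^{\|\ \|}$, i.e. $\varphi_\rho = $ the element of $\bord D_\rho$ minimizing $\|\cdot\|$.

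Next I would set up the continuity argument. Fix $\rho_0$ and let $\varphi_0 = \ta_{\rho_0}^{\|\ \|}$ be the minimal-norm point of $\bord D_{\rho_0}$. Since $D_{\rho_0}$ is strictly convex with analytic boundary (Proposition \ref{prop:algo}), $\varphi_0$ is the strict minimizer of the (strictly convex, since the norm is Weyl-invariant — one may pass to an inscribed Euclidean norm or argue directly with the strict convexity of $D_{\rho_0}$) variational problem $\min\{\|\varphi\| : \varphi \in D_{\rho_0}\}$; in particular for every neighborhood $W$ of $\varphi_0$ there is $\eps > 0$ such that $\|\varphi\| \geq \|\varphi_0\| + \eps$ for all $\varphi \in D_{\rho_0} \setminus W$. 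By Theorem \ref{teo:continuo} (more precisely by its proof, which shows $\bord D_\rho$ converges to $\bord D_{\rho_0}$ as $\rho \to \rho_0$ in the local Hausdorff sense, being the zero set $\vo P^{-1}\{0\}$ with $0$ a regular value, and $\vo P$ varying continuously by Proposition \ref{prop:funcion} and Proposition \ref{prop:pression}), for $\rho$ close to $\rho_0$ the set $\bord D_\rho$ meets $W$, and by the implicit function theorem applied at the regular point $\varphi_0$ one gets a point $\varphi_\rho' \in \bord D_\rho \cap W$ with $\|\varphi_\rho'\| \to \|\varphi_0\|$. Hence $h_\rho^{\|\ \|} = \min_{\bord D_\rho}\|\cdot\| \leq \|\varphi_\rho'\| \to \|\varphi_0\| = h_{\rho_0}^{\|\ \|}$, and conversely $\liminf h_\rho^{\|\ \|} \geq h_{\rho_0}^{\|\ \|}$ follows from the gap estimate above together with $\bord D_\rho \to \bord D_{\rho_0}$ (any minimizer of $\|\cdot\|$ on $\bord D_\rho$ must eventually lie in $W$). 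Therefore $\ta_\rho^{\|\ \|}$, being the unique minimizer on $\bord D_\rho$, converges to $\ta_{\rho_0}^{\|\ \|}$ as $\rho \to \rho_0$.

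The main obstacle I anticipate is controlling the \emph{uniqueness} of the minimizer uniformly in $\rho$ near $\rho_0$: one needs that $\bord D_\rho$ is uniformly strictly convex near $\varphi_0$ so that its minimal-norm point is isolated and depends continuously on $\rho$, rather than jumping. This is where the analyticity and regularity of $\bord D_\rho = \vo P^{-1}\{0\}$ (with Hessian of $\vo P$ controlled via the strictly convex second-derivative formula of Proposition \ref{prop:pression}, itself depending continuously on $\rho$ by Proposition \ref{prop:funcion}) is essential: it upgrades the Hausdorff convergence $\bord D_\rho \to \bord D_{\rho_0}$ to convergence of the supporting functional in the prescribed direction. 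I would phrase this cleanly by noting that $\varphi \mapsto \|\varphi\|$ restricted to $\bord D_\rho$ has a nondegenerate minimum at a point varying smoothly with $\rho$, by the implicit function theorem applied to the pair of equations $\vo P(\varphi) = 0$ and ``$\varphi$ is conormal to the level set at a point where $d\|\cdot\|$ is parallel to $d\vo P$'', whose linearization is invertible at $(\rho_0,\varphi_0)$ precisely because $D_{\rho_0}$ is strictly convex there.
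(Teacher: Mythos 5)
Your argument is correct, and it works on the dual side where the paper works on the primal side. The paper's own proof is two lines: it recalls that $\ta_\rho^{\|\ \|}$ is the tangent functional to $\psi_\rho$ at the direction where $\psi_\rho(\cdot)/\|\cdot\|$ attains its maximum, notes that strict concavity places this direction in the interior of $\cone_\rho$, and invokes Theorem \ref{teo:continuo}. You instead characterize $\ta_\rho^{\|\ \|}$ as the minimal-norm point of $\bord D_\rho$ and run the continuity argument on $\bord D_\rho=\vo P^{-1}\{0\}$ directly, via the gap estimate coming from strict convexity of $D_{\rho_0}$ together with local Hausdorff convergence of the boundaries. Since the proof of Theorem \ref{teo:continuo} itself proceeds through $\bord D_\rho$, the two arguments rest on the same machinery; yours has the advantage of being self-contained where the paper's is elliptical --- mere $C^0$ convergence of $\psi_\rho$ on interior cones does not by itself yield convergence of tangent functionals, and your gap-plus-uniqueness step is exactly the missing glue. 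Two points are worth making explicit. First, uniqueness of the minimal-norm point follows from strict convexity of $D_\rho$ alone (the set of minimizers is a convex subset of $\bord D_\rho$, hence a singleton), so no replacement of the norm by a Euclidean one is needed; this matters because a Weyl-invariant norm need not have a strictly convex dual ball. Second, the identification of the minimal-norm point of $\bord D_\rho$ with the tangent functional at the growth direction is the Lagrange/Hahn--Banach step: since $\tau_\rho^{\|\ \|}$ maximizes the homogeneous function $\psi_\rho$ on the unit sphere, $\ta_\rho^{\|\ \|}=h_\rho^{\|\ \|}\, g$ for some supporting functional $g$ of the unit ball at $\tau_\rho^{\|\ \|}$, whence $\|\ta_\rho^{\|\ \|}\|=h_\rho^{\|\ \|}=\inf_{D_\rho}\|\cdot\|$; this deserves a sentence rather than a bare appeal to the duality lemma. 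Your closing implicit-function-theorem reformulation is unnecessary (and delicate if $\|\cdot\|$ is not smooth); the Hausdorff-convergence argument you already gave suffices.
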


\begin{proof} Recall that $\ta_\rho^{\|\ \|}$ is the functional tangent to $\psi_\rho$ in the direction that $\psi_\rho(\cdot)/\|\cdot\|$ attains it maximum. Since $\psi_\rho$ is strictly concave, this direction belongs to the interior of $\cone_\rho.$ This remarks together with theorem \ref{teo:continuo} imply the result.
\end{proof}


Recall we have denoted $h^{\|\ \|}_\rho$ for the exponential growth rate  $$h_\rho^{\|\  \|}=\lim_{s\to\infty}\frac{\log\#\{\g\in\G:\|a(\rho\g)\|\leq s\}}s.$$

\begin{cor}\label{cor:entropia}  The function $:\hom^\Pi(\G,G)\to\R$ $$\rho\mapsto h_{\rho(\G)}^{\|\ \|}$$ is continuous.
\end{cor}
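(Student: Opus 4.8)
The plan is to express $h^{\|\ \|}_{\rho(\G)}$ in terms of the growth indicator and then reduce to Theorem~\ref{teo:continuo}. Recall from Quint's work that
$$h_{\rho(\G)}^{\|\ \|}=\sup_{v\in\frak a-\{0\}}\frac{\psi_\rho(v)}{\|v\|},$$
so the statement amounts to continuity of $\rho\mapsto\sup_{\|v\|=1}\psi_\rho(v)$. The key point — which is exactly why strict concavity was proved — is that this supremum is \emph{attained in the interior} of the limit cone $\cone_\rho$: since $\psi_\rho$ is strictly concave (Corollary~\ref{cor:tangente}) and has vertical tangent on $\bord\cone_\rho$, the maximizing direction $\tau^{\|\ \|}_\rho$ lies in $\inter(\cone_\rho)$, and it is unique. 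Thus, while the sup over the whole sphere could a priori be sensitive to the boundary behaviour of $\cone_\rho$, in fact only a compact subcone of the interior matters.

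First I would fix $\rho_0$ and note that $\tau^{\|\ \|}_{\rho_0}\in\inter(\cone_{\rho_0})$. Choose a small closed cone $\scr C\subset\inter(\cone_{\rho_0})$ whose interior contains $\tau^{\|\ \|}_{\rho_0}$. By Corollary~\ref{cor:conolimite} (continuity of $\rho\mapsto\P(\cone_\rho)$) there is a neighbourhood $U$ of $\rho_0$ with $\scr C\subset\inter(\cone_\rho)$ for all $\rho\in U$. By Theorem~\ref{teo:continuo}, $\rho\mapsto\psi_\rho|_{\scr C}$ is continuous on $U$, hence $\rho\mapsto\sup_{v\in\scr C,\|v\|=1}\psi_\rho(v)$ is continuous on $U$. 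It remains to show that, after possibly shrinking $U$, this restricted supremum equals the full $h_{\rho(\G)}^{\|\ \|}$, i.e. that the maximizing direction $\tau^{\|\ \|}_\rho$ stays inside $\scr C$ for $\rho$ near $\rho_0$.

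The main obstacle is this last upper-semicontinuity-of-the-maximizer step: one must rule out the maximizing direction escaping towards $\bord\cone_\rho$ as $\rho$ varies. I would handle it via the dual description. By Corollary~\ref{cor:tangente}, $\tau^{\|\ \|}_\rho$ is the direction $\int F_\rho\,dm_{\varphi}$ where $\varphi\in\bord D_\rho$ is the (unique) functional tangent to $\psi_\rho$ that also realizes $\|\varphi\|=\sup_{\|v\|=1}\psi_\rho(v)$; equivalently, among $\varphi\in\bord D_\rho$, this is the one of minimal norm. Since $\bord D_\rho=\{\varphi:P(-\varphi(F_\rho))=0\}$ varies continuously (Proposition~\ref{prop:D}, Proposition~\ref{prop:funcion}, and analyticity/implicit function theorem as in Lemma~\ref{lema:entropia}), the minimal-norm point $\varphi_\rho$ of $\bord D_\rho$ varies continuously with $\rho$; strict convexity of $D_\rho$ (Proposition~\ref{prop:algo}) guarantees it is unique and that the map $\rho\mapsto\varphi_\rho$ is well-defined and continuous. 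Then $\tau^{\|\ \|}_\rho=\int F_\rho\,dm_{\varphi_\rho}$ depends continuously on $\rho$ (continuity of $F_\rho$ up to cohomology, Proposition~\ref{prop:funcion}, and analyticity of the equilibrium-state map, Lemma~\ref{lema:entropia}), so it lies in $\scr C$ for $\rho$ in a smaller neighbourhood of $\rho_0$.

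Putting the pieces together: on a neighbourhood $U'\subset U$ of $\rho_0$ we have $\tau^{\|\ \|}_\rho\in\inter\scr C$, hence
$$h^{\|\ \|}_{\rho(\G)}=\sup_{v\in\frak a-\{0\}}\frac{\psi_\rho(v)}{\|v\|}=\sup_{v\in\scr C-\{0\}}\frac{\psi_\rho(v)}{\|v\|},$$
and the right-hand side is continuous on $U'$ by Theorem~\ref{teo:continuo}. Since $\rho_0$ was arbitrary, $\rho\mapsto h^{\|\ \|}_{\rho(\G)}$ is continuous. \qed
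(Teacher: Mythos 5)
Your proof is correct and follows essentially the same route as the paper: the paper deduces the corollary from the continuity of the growth form $\ta_\rho^{\|\ \|}$ (its preceding corollary, itself proved by observing that strict concavity forces the maximizing direction into $\inter(\cone_\rho)$ and then invoking Theorem~\ref{teo:continuo}), together with $h_{\rho(\G)}^{\|\ \|}=\|\ta_\rho^{\|\ \|}\|$. You have simply inlined that preceding corollary, supplying more detail (via the minimal-norm point of the strictly convex set $\bord D_\rho$) on why the maximizing direction cannot escape the fixed compact subcone $\scr C$ — a step the paper leaves implicit.
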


\begin{proof}Obvious since $h_{\rho}^{\|\ \|}$ coincides with the norm of the growth form, $$\|\ta_\rho^{\|\ \|}\|=h_{\rho(\G)}^{\|\ \|},$$ the result then follows from the last corollary.
\end{proof}

\bibliography{stage}

\begin{thebibliography}{10}

\bibitem{abramov}
L.M. Abramov.
\newblock On the entropy of a flow.
\newblock {\em Dokl. Akad. Nauk. SSSR}, 128, 1959.

\bibitem{limite}
Y.~Benoist.
\newblock Propri{\'e}t{\'e}s asymptotiques des groupes lin{\'e}aires.
\newblock {\em Geom. funct. anal.}, 7(1), 1997.

\bibitem{benoist2}
Y.~Benoist.
\newblock Propri{\'e}t{\'e}s asymptotiques des groupes lin{\'e}aires ii.
\newblock {\em Adv. Stud. Pute Math.}, 26, 2000.

\bibitem{convexes1}
Y.~Benoist.
\newblock Convexes divisibles $\textrm{I}$.
\newblock In {\em Algebraic groups and arithmetic}, pages 339--374. Tata Inst.
  Fund. Res., 2004.

\bibitem{Bowen1}
R.~Bowen.
\newblock Periodic orbits of hyperbolic flows.
\newblock {\em Amer. J. Math.}, 94, 1972.

\bibitem{bowen2}
R.~Bowen.
\newblock Symbolic dynamics for hyperbolic flows.
\newblock {\em Amer. J. Math.}, 95, 1973.

\bibitem{bowenruelle}
R.~Bowen and D.~Ruelle.
\newblock The ergodic theory of axiom $\textrm{A}$ flows.
\newblock {\em Invent. Math.}, 29, 1975.

\bibitem{guichardweinhard}
O.~Guichard and A.~Weinhard.
\newblock Anosov representations: domains of discontinuity and applications.
\newblock ArXiv: 1108.0733v1, 2011.

\bibitem{labourie}
F.~Labourie.
\newblock Anosov flows, surface groups and curves in projective space.
\newblock {\em Invent. Math.}, 165, 2006.

\bibitem{ledrappier}
F.~Ledrappier.
\newblock Structure au bord des vari{\'e}t{\'e}s {\`a} courbure n{\'e}gative.
\newblock {\em S{\'e}minaire de th{\'e}orie spectrale et g{\'e}om{\'e}trie de
  Grenoble}, 71, 1994-1995.

\bibitem{livsic}
A.O. Lopes and Ph. Thieullen.
\newblock Sub-actions for $\tex{A}$nosov flows.
\newblock {\em Ergod. Th. \& Dynam. Sys.}, 25, 2005.

\bibitem{margulistesis}
G.~Margulis.
\newblock Applications of ergodic theory to the investigation of manifolds with
  negative curvature.
\newblock {\em Functional Anal. Appl.}, 3, 1969.

\bibitem{patterson}
S.-J. Patterson.
\newblock The limit set of a fuchsian group.
\newblock {\em Acta mathematica}, 136, 1976.

\bibitem{pollicottsharp2}
M.~Pollicott and R.~Sharp.
\newblock Length asymptotics in higher teichm\"uller theory.
\newblock Preprint.

\bibitem{quint2}
J.-F. Quint.
\newblock Divergence exponentielle des sous-groupes discrets en rang
  sup{\'e}rieur.
\newblock {\em Comment. Math. Helv.}, 77, 2002.

\bibitem{quint1}
J.-F. Quint.
\newblock Mesures de $\textrm{P}$atterson-$\textrm{S}$ullivan en rang
  sup{\'e}rieur.
\newblock {\em Geom. funct. anal.}, 12, 2002.

\bibitem{quints}
J.-F. Quint.
\newblock L'indicateur de croissance des groupes de $\textrm{S}$chottky.
\newblock {\em Ergod. Th. \& Dynam. Sys.}, 23, 2003.

\bibitem{ratnerpresion}
M.~Ratner.
\newblock The central limit theorem for geodesic flows on $n$-manifolds of
  negative curvature.
\newblock {\em Israel J. Math.}, 16, 1973.

\bibitem{ruelle}
D.~Ruelle.
\newblock {\em Thermodynamic $\textrm{F}$ormalism}.
\newblock Addison-Wesley, London, 1978.

\bibitem{quantitative}
A.~Sambarino.
\newblock Quantitative properties of convex representations.
\newblock Submited. arXiv:1104.4705v1, 2011.

\bibitem{Shub}
M.~Shub.
\newblock {\em Global stability of dynamical systems}.
\newblock Springer Verlag, 1987.

\bibitem{sullivan}
D.~Sullivan.
\newblock The densitiy at infinity of a discrete group of hyperbolic motions.
\newblock {\em Publ. Math. de l'I.H.E.S.}, 50, 1979.

\bibitem{tits}
J.~Tits.
\newblock Repr{\'e}sentations lin{\'e}aires irr{\'e}ductib les d'un groupe
  r{\'e}ductif sur un corps quelconqe.
\newblock {\em J. Reine Angew. Math.}, 247, 1971.

\bibitem{yue}
C.~Yue.
\newblock The ergodic theory of discrete isometry groups on manifolds of
  variable negative curvature.
\newblock {\em Trans. of the A.M.S.}, 348(12), 1996.

\end{thebibliography}
\bibliographystyle{plain}

\author{$\ $ \\
Andr\'es Sambarino\\
  Laboratoire de Math\'ematiques\\
Universit\'e Paris Sud,\\
  F-91405 Orsay France,\\
  \texttt{andres.sambarino@math.u-psud.fr}}

\end{document}